\newcommand{\margnote}[1]{
\ifthenelse{\boolean{shownotes}}%
{\marginpar{\raggedright\tiny\texttt{#1}}}%
{}%
}
\newcommand{\hole}[1]{
\ifthenelse{\boolean{shownotes}}%
{\begin{center} \fbox{ \rule {.25cm}{0cm}
\rule[-.1cm]{0cm}{.4cm} \parbox{.85\textwidth}{\begin{center}
\texttt{#1}\end{center}} \rule {.25cm}{0cm}}\end{center}}
{}
}
\numberwithin{equation}{section} %riparte da zero ogni sezione
\newcounter{cont}[section]
\newtheorem{theorem}[cont]{Theorem}
\newtheorem{corollary}[cont]{Corollary}
\newtheorem{lemma}[cont]{Lemma}
\theoremstyle{definition}
\newtheorem{remark}[cont]{Remark}
\newtheorem{definition}[cont]{Definition}
 \theoremstyle{remark}
\renewcommand{\Re}{\mathrm{Re}\,}
\newcommand{\e}{\varepsilon}
\newcommand{\brm}{\overline{m}}
\newcommand{\rhos}{\rho_*}
\newcommand{\Us}{U_*}
\newcommand{\ms}{m_*}
\newcommand{\N}{\mathbb{N}}
\newcommand{\R}{\mathbb{R}}
\newcommand{\C}{\mathbb{C}}
\newcommand{\Z}{\mathbb{Z}}
\newcommand{\hU}{\widehat{U}}
\newcommand{\hV}{\widehat{V}}
\newcommand{\hA}{\widehat{A}}
\newcommand{\hB}{\widehat{B}}
\newcommand{\hK}{\widehat{K}}
\newcommand{\brc}{\langle \xi \rangle}
\newcommand{\La}{\Lambda}
\newcommand{\tiA}{\widetilde{A}}
\newcommand{\tiB}{\widetilde{B}}
\newcommand{\cF}{{\mathcal{F}}}
\newcommand{\cE}{{\mathcal{E}}}
\newcommand{\cL}{{\mathcal{L}}}
\newcommand{\cA}{{\mathcal{A}}}
\newcommand{\cB}{{\mathcal{B}}}
\newcommand{\cS}{{\mathcal{S}}}
\newcommand{\cT}{{\mathcal{T}}}
\newcommand{\cM}{{\mathcal{M}}}
\newcommand{\balp}{\overline{\alpha}}
\newcommand{\bbet}{\overline{\beta}}
\newcommand{\bA}{\overline{A}}
\newcommand{\vertiii}[1]{{\left\vert\kern-0.25ex\left\vert\kern-0.25ex\left\vert #1 
    \right\vert\kern-0.25ex\right\vert\kern-0.25ex\right\vert}}
\newcommand{\pd}{\partial}
\newcommand{\bw}{\overline{w}}
\newcommand{\br}{\overline{\rho}}
\newcommand{\mc}{m_*}
\newcommand{\rc}{\rho_*}
\newcommand{\tL}{\widetilde{\cL}}
\newcommand{\hw}{\hat{w}}
\newcommand{\tw}{\tilde{w}}
\begin{document}

\title[Well-posedness and decay structure of a QHD system with viscosity]{Well-posedness and decay structure of a quantum hydrodynamics system with Bohm potential and linear viscosity}

\author[R. G. Plaza]{Ram\'on G. Plaza}

\address[R. G. Plaza]{Departamento de Matem\'aticas y Mec\'anica\\Instituto de 
Investigaciones en Matem\'aticas Aplicadas y en Sistemas\\Universidad Nacional Aut\'onoma de 
M\'exico\\Circuito Escolar s/n, Ciudad Universitaria C.P. 04510 Cd. Mx. (Mexico)}

\email{plaza@mym.iimas.unam.mx}

\author[D. Zhelyazov]{Delyan Zhelyazov}

\address[D. Zhelyazov]{Departamento de Matem\'aticas y Mec\'anica\\Instituto de 
Investigaciones en Matem\'aticas Aplicadas y en Sistemas\\Universidad Nacional Aut\'onoma de 
M\'exico\\Circuito Escolar s/n, Ciudad Universitaria C.P. 04510 Cd. Mx. (Mexico)}

\email{delyan.zhelyazov@iimas.unam.mx}

\keywords{Quantum hydrodynamics; well-posedness; decay structure}

\subjclass[2020]{76Y05, 35B35, 35B40}

%%%%%%%%%%%%%%%%%%%%%%%%%%%%%%%\nocite{*}

\begin{abstract} 
In this paper, a compressible viscous-dispersive Euler system in one space dimension in the context of quantum hydrodynamics is considered. The purpose of this study is twofold. First, it is shown that the system is locally well-posed. For that purpose, the existence of classical solutions which are perturbation of constant states is established. Second, it is proved that in the particular case of subsonic equilibrium states, sufficiently small perturbations decay globally in time. In order to prove this stability property, the linearized system around the subsonic state is examined. Using an appropriately constructed compensating matrix symbol in the Fourier space, it is proved that solutions to the linear system decay globally in time, underlying a dissipative mechanism of regularity gain type. These linear decay estimates, together with the local existence result, imply the global existence and the decay of perturbations to constant subsonic equilibrium states as solutions to the full nonlinear system.
\end{abstract}

%%%%%%%%%%%%%%%%%%%%%%%%%%%\tableofcontents

\maketitle

\section{Introduction}
\label{sec:intro}
Consider the following quantum hydrodynamics (QHD) system with linear viscosity,
\begin{equation}
\label{QHD}
	\begin{cases}
		\rho_t + m_x=0,\\
		m_t +\left(\displaystyle\frac{m^2}{\rho}+p(\rho)\right)_x=\mu m_{xx}+k^2\rho\left(\displaystyle\frac{(\sqrt{\rho})_{xx}}{\sqrt{\rho}}\right)_x,
	\end{cases}
\end{equation}
where $\rho > 0$ is the density, $m=\rho u$ denotes the momentum ($u$ is the velocity), $p(\rho) = \rho^\gamma$ with $\gamma > 1$ is the pressure function and $\mu > 0$, and $k > 0$ are positive constants representing viscosity and dispersive coefficients, respectively. The term $(\sqrt{\rho})_{xx}/\sqrt{\rho}$ is known as the (normalized) quantum Bohm potential \cite{Boh52a,Boh52b}, providing the model with a nonlinear third order dispersive term. It can be interpreted as a quantum correction to the classical pressure (stress tensor). The viscosity term, in contrast, is of linear type. The resulting system is used, for instance, in superfluidity \cite{Khlt89}, or in classical hydrodynamical models for semiconductor devices \cite{GarC94}.

Systems in QHD first appeared in the work by Madelung \cite{Mdlng27} as an alternative formulation of the Schr\"odinger equation, written in terms of hydrodynamical variables, and structurally similar to the Navier--Stokes equations of fluid dynamics. It constituted a precursor theory of the de Broglie--Bohm causal interpretation of quantum theory \cite{Boh52a,Boh52b,BHK87}. Since then, quantum fluid models have been applied to describe many physical phenomena, such as the mathematical description of superfluidity \cite{Khlt89,Land41}, the modeling of quantum semiconductors \cite{FrZh93,GarC94}, and the dynamics of Bose--Einstein condensates \cite{DGPS99,GrantJ73}, just to mention a few. In recent years, models in QHD have attracted the attention of mathematicians and physicists alike, thanks to their capability of describing particular physical systems, as well as their underlying mathematical challenges. Many mathematical results pertain to the existence of weak  solutions \cite{AnMa09,AnMa16,AnMa12,JMRi02,YPHQ20} and their stability \cite{AnMaZh21}, relaxation limits \cite{ACLS21}, the analysis of purely dispersive shocks \cite{GuPi74,Sgdv64,Gas01,HACCES06,HoAb07}, or the study of classical limits when the Planck constant tends to zero \cite{GasM97}. Most of these works pertain to the purely dispersive model with no viscous effects. The above list of references is by no means exhaustive and the reader is invited to see the references cited therein for more information. QHD models with viscosity have been studied under the perspective of viscous (numerical) stabilization \cite{JuMi07a}, the physical description of dense plasmas \cite{DiMu17,GMOB22,BrMe10}, the existence and stability of viscous-dispersive shocks \cite{LMZ20a,LMZ20b,FPZ22, Zhel-preprint,LaZ21b,LaZ21a,FPZ23}, the existence of global solutions \cite{GJV09}, and vanishing viscosity behaviors \cite{YaJuY15}.

In this paper, we are interested in the \emph{decay structure} of QHD systems with viscosity under the framework of Humpherys’s extension \cite{Hu05} to higher order system of the classical results by Kawashima \cite{KaTh83} and Shizuta and Kawashima \cite{ShKa85,KaSh88a} for hyperbolic-parabolic type systems. Humpherys \cite{Hu05} extended Kawashima and Shizuta's notions of strict dissipativity, genuine coupling and symmetrizability to viscous-dispersive one-dimensional systems of any order, such as the viscous QHD model under consideration. First, Humpherys introduces the concept of \emph{symbol symmetrizability}, which is a generalization of the classical notion symmetrizability in the sense of Friedrichs \cite{Frd54}, and extends the genuine coupling condition for any symmetric Fourier symbol of the linearized higher-order operator around an equilibrium state. Humpherys then shows that his notion of genuine coupling is equivalent to the strict dissipativity of the system and to the existence of a compensating matrix \emph{symbol} for the linearized system, which allows to close energy (stability) estimates.

Our analysis is divided into two parts. The first one is devoted to proving the local existence of perturbations to constant equilibrium states of the form $U_* = (\rhos, \ms)$ with $\rhos > 0$. Since the system under consideration is structurally very similar to the Navier-Stokes-Korteweg system \cite{HaLi94,Kot08} we follow the classical proof by Hattori and Li \cite{HaLi94} very closely.  Here we recast local well-posedness in terms of perturbations of an equilibrium state, a formulation which is suitable for our needs. The local existence result (see Theorem \ref{themlocale} below) guarantees the existence of classical solutions as well as the appropriate energy bounds for the perturbations which are needed for the global decay analysis. Although the arguments are classical, we present a full proof of local existence for the sake of completeness and to fulfill the requirement of obtaining appropriate energy bounds (see Corollary \ref{cor26}). Besides, there is no local existence result for the QHD system with the particular viscosity term appearing in \eqref{QHD} reported in the literature, up to our knowledge.

The second part of the paper focuses on \emph{subsonic} equilibrium states, satisfying the condition
\[
p'(\rho_*) > \frac{m_*^2}{\rho_*^2}.
\]
It can be proved (see Lemma \ref{lemsupersonic} below) that supersonic states (namely, those which satisfy $p'(\rho_*) < {m_*^2}/{\rho_*^2}$) do not satisfy the strict dissipativity condition for the linearized system, justifying in this case the choice of subsonic states for our analysis. The intermediate case of sonic states with $p'(\rho_*) = {m_*^2}/{\rho_*^2}$ is associated to degeneracies (such as in viscous-dispersive shock theory) and it is not clear whether symbol symmetrization and/or genuine coupling hold in this case. Hence, we have left the analysis of sonic states for a future work. We proceed to linearize system \eqref{QHD} around a subsonic state and to study its strict dissipativity. It is shown that the linearized QHD system is symbol symmetrizable but not Friedrichs symmetrizable, and that it satisfies the genuine coupling condition. Thanks to a new degree of freedom in the choice of the symbol symmetrizer (see also the related analyses \cite{PlV22,PlV23} for Korteweg fluids) it is possible to construct an appropriate compensating matrix symbol for the linearized system (see Lemma \ref{lemourK}), which is uniformly bounded above in the Fourier parameter and which allows to close the energy estimates at the linear level. Such estimates underlie a decay structure of \emph{regularity-gain type}, yielding optimal pointwise decay rates of the solutions to the linear system in Fourier space (see \cite{UDK12, UDK18} or Remark \ref{remtypediss} below). The linear decay rates are then used to prove the nonlinear decay of small perturbations of constant equilibrium states, culminating into the global existence and optimal time-decay of perturbation solutions (see our Main Theorem \ref{gloexth} below). 

The work of Tong and Xia \cite{ToXi22} warrants note as the first work (up to our knowledge) that analyzes the decay of perturbations of equilibrium states for a QHD system with viscosity (see also Ra and Hong \cite{RaHo21} for a similar analysis in the case of the QHD system with energy exchanges). Our work differs from the aforementioned works in the sense that their analysis consists of obtaining direct nonlinear energy estimates, relying heavily on the intrinsic structure of the QHD model. The technique presented here examines whether the linearized system around the constant state exhibits some abstract symmetrizability and dissipative properties which can be extrapolated to the nonlinear problem.

The paper is structured as follows. Section \ref{seclocale} contains the proof of local well-posedness for system \eqref{QHD}. The problem is recast in terms of perturbations or arbitrary constant equilibrium states. In Section \ref{seclinear} we study the linearized system around a subsonic equilibrium state. We examine the genuine coupling condition in the sense of Humpherys \cite{Hu05} and exhibit a family of symbol symmetrizers. The subsonicty of the constant state plays a key role. With this information, we obtain the linear decay rates for the associated semigroup. Finally, Section \ref{secglobal} contains the global decay result for small perturbations of constant equilibrium subsonic states.

\subsection*{On notation} 
Transposition of vectors or matrices is denoted by the symbol $A^\top$. Linear operators acting on infinite-dimensional spaces are indicated with calligraphic letters (e.g., $\cA$). We denote the real part of a complex number $\lambda \in \C$ by $\Re\lambda$. Standard Sobolev spaces of complex-valued functions on the real line will be denoted as $L^2(\R)$ and $H^s(\R)$, with $s \in \R$, endowed with the standard inner products and norms. The norm on $H^s(\R)$ will be denoted as $\| \cdot \|_s$ and the norm in $L^2$ will be henceforth denoted by $\| \cdot \|_0$. Any other $L^p$ -norm will be denoted as $\| \cdot \|_{L^p}$ for $\infty \geq p \geq 1$. The $L^2$-scalar product will be denoted by $\langle \cdot, \cdot \rangle_0$, whereas $\langle \cdot, \cdot \rangle$ is the standard inner product in $\C^n$. For any $T > 0$ we denote by $C_0^{\infty}([0,T] \times \R)$ the set of infinitely differentiable functions on $[0,T] \times \R$ such that $\partial_x^i f(t,\cdot) \rightarrow 0$ as $|x| \to \infty$, $i \in \mathbb{N}_0:= \N \cup \{ 0 \}$, that is, functions that for fixed $t$ have all derivatives going to zero for large $|x|$. Let $p$ be a vector in $\R^n$. Its two-norm is $|p| = \left ( \sum_{i=1}^n p_i^2 \right)^{\frac{1}{2}}$. For a matrix $A \in \R^{n \times n}$, $| A |  = \sup_{|p|=1}|A p|$ denotes its two-norm. For operators $\cA, \cB$ denote by $[\cA,\cB] = \cA \cB - \cB \cA$ their commutator. Finally, we recall Sobolev's embedding theorem: if $f \in H^1(\R)$, then $\| f\|_{L^\infty} \leq C \| f\|_{1}$, where the constant $C$ does not depend on $f$.
\label{sec:preliminaries}

\section{Local well-posedness theory}
\label{seclocale}

This section is devoted to proving the local existence of solutions for \eqref{QHD}. First we will derive energy estimates satisfied by the solution of a quasi-linearized system related to \eqref{QHD}. Then we will prove existence of solution for this system which satisfies these estimates. Finally, we will use a fixed point argument to establish local existence for the nonlinear problem. 

Let $\rhos > 0$ and $\ms \in \R$ be constant equilibrium states for system \eqref{QHD}. We are interested in proving the local existence of perturbations to this constant equilibrium state. For that purpose, we define the following space of perturbations. For positive constants $R \geq r >0$, $T>0$ and for any $s \geq 3$ we denote,
\[
\begin{aligned}
X_s((0,T);r,R) := \Big\{ (\rho,m) \, : \; \, &\rho  \in C((0,T); H^{s+1}(\R)) \cap C^1((0,T); H^{s-1}(\R)), \\
& m \in  C((0,T); H^s(\R)) \cap C^1((0,T); H^{s-2}(\R)),\\
&(\rho_x, m_x) \in L^2((0,T); H^{s+1}(\R) \times H^s(\R)), \\
&\text{and} \; r \leq \rho(x,t) \leq R \; \; \text{a.e. in } \, (x,t) \in \R \times (0,T) \Big\}.
\end{aligned}
\]
%\[
%\begin{aligned}
%X_s((0,T);r,R) := \Big\{ (\rho,m) \, : \; \, &\rho - \rhos \in C((0,T); H^{s+1}(\R)) \cap C^1((0,T); H^{s-1}(\R)), \\
%& m-\ms \in  C((0,T); H^s(\R)) \cap C^1((0,T); H^{s-2}(\R)),\\
%&(\rho_x, m_x) \in L^2((0,T); H^{s+1}(\R) \times H^s(\R)), \\
%&\text{and} \; r \leq \rho(x,t) \leq R \; \; \text{a.e. in } \, (x,t) \in \R \times (0,T) \Big\}.
%\end{aligned}
%\]
Henceforth, for any $U = (\rho, m) \in X_s\big((0,T); r,R\big)$ we denote
\begin{align}
E_s(t) &:= \sup_{\tau \in [0,t]} \Big( \| \rho(\tau)\|^2_{s+1} + \| m(\tau)\|^2_{s} \Big), 
\label{defEs}\\
F_s(t) &:= \int_0^t \Big( \| \rho_x(\tau) \|_{s+1}^2 + \| m_x(\tau) \|_s^2 \Big) \, d\tau ,
\label{defFs}
\end{align}
for all $t \in [0,T]$.  

Our main goal is to prove the following local existence result.

\begin{theorem}
\label{themlocale}
Let $\Us = (\rhos, \ms) \in \R^2$ be a constant equilibrium state with $\rhos >0$. Suppose that
\begin{equation}
\label{incond}
\rho_0  \in H^{s+1}(\R), \quad m_0  \in H^s(\R),
\end{equation}
for some $s \geq 3$ are initial perturbations of $(\rhos, \ms)$ and consider an initial condition of the form
\begin{equation}
\label{ic}
U(0)+U_* = (\rho_0 + \rhos, m_0 + \ms).
\end{equation}
Then there exists a positive constant $a_0 > 0$ such that if
\[
\| \rho_0 \|_{s+1} + \| m_0  \|_{s} < a_0,
\]
then there holds $r_0 \leq \rhos + \rho_0(x) \leq R_0$, \emph{a.e.} in $x \in \R$, for some constants $R_0 > r_0 > 0$, and there exists a positive time $T_1 = T_1(a_0) > 0$ such that a unique smooth solution of the form $(\rho(x,t) + \rhos, m(x,t) + m_*)$, with perturbation belonging to the space
\[
(\rho ,m) \in X_s\big((0,T_1); \tfrac{1}{2}r_0,2R_0\big),
\]
exists for the Cauchy problem of system \eqref{QHD} with initial data \eqref{ic}. Moreover, the solution satisfies the energy estimate
%\[
%\begin{aligned}
%\sup_{\tau \in [0,T_1]} \big( \| \rho(\tau)\|^2_{s+1} + \| m(\tau)\|^2_{s} \big) + \int_0^{T_1} \big( \| \rho_x(\tau) \|_{s+1}^2 &+ \| m_x(\tau) \|_s^2 \big) \, d\tau \\ &\leq C \| \rho_0\|^2_{s+1} + \| m_0\|^2_{s}.
%\end{aligned}
%\]
\begin{equation}
\label{localEE}
E_s(T_1) + F_s(T_1) \leq C_0 E_s(0),
\end{equation}
for some constant $C_0 > 0$ depending only on $a_0$.
\end{theorem}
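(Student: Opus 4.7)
The plan is to adapt the classical Hattori--Li iteration scheme for Navier--Stokes--Korteweg type systems to perturbations $U = (\rho, m)$ of the equilibrium $\Us = (\rhos, \ms)$. A preliminary observation is the algebraic identity
\[
\rho \left( \frac{(\sqrt{\rho})_{xx}}{\sqrt{\rho}} \right)_x = \frac{1}{2} \rho_{xxx} - \frac{\rho_x \rho_{xx}}{\rho} + \frac{\rho_x^3}{2 \rho^2},
\]
which isolates a linear third-order principal part $\tfrac{k^2}{2}\rho_{xxx}$ from the Bohm term. The resulting system is first-order in $\rho$ through the continuity equation, second-order parabolic in $m$ through $\mu m_{xx}$, and third-order dispersive in $\rho$ through the momentum equation. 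This dictates the asymmetric regularity choice $\rho \in H^{s+1}$, $m \in H^s$ in the definition of $X_s$, together with the expected dissipation gain $\rho_x \in L^2_t H^{s+1}$, $m_x \in L^2_t H^s$ that is recorded in $F_s$.

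I would then define an iteration map $\Phi : \bU \mapsto U$ by quasi-linearizing \eqref{QHD}: freeze at $\bU = (\brho, \brm) \in X_s((0,T);r,R)$ the coefficients that multiply the principal-order derivatives. The pointwise bound $r \leq \brho \leq R$ guarantees that factors of $1/\brho$ and $1/\sqrt{\brho}$ remain smooth and uniformly bounded, so existence of a solution $U$ in the class $X_s$ for the resulting linear system can be obtained by a standard parabolic regularization, adding $\varepsilon \rho_{xx}$ to the continuity equation and passing $\varepsilon \to 0^+$, or equivalently by a Galerkin scheme in a countable basis of $H^{s+1}(\R) \times H^s(\R)$.

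The crucial step is a uniform a priori estimate of the form
\[
E_s(t) + F_s(t) \leq C \, E_s(0) + C \int_0^t \Theta(\bU) \big( 1 + E_s(\tau) + F_s(\tau) \big) \, d\tau,
\]
for constants $C = C(r,R,s) > 0$ and $\Theta$ a polynomial in the norms of $\bU$. For each $j = 0, 1, \ldots, s$ I would apply $\partial_x^j$ to both equations and then (i) pair the continuity equation with $\partial_x^j \rho$, (ii) pair the momentum equation with $\partial_x^j m$, and (iii) take an additional weighted pairing with $-\partial_x^{j+2}\rho$ in the continuity equation in order to activate the dispersive gain. The key cancellation is that the cross term $\langle \partial_x^{j+1} m, \partial_x^{j+1} \rho \rangle_0$ appears with opposite signs in two of these identities, leaving after integration by parts the dissipation $\|\rho_x\|_{s+1}^2$ alongside the direct $\mu\|m_x\|_s^2$ contribution from the viscosity. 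Moser-type product inequalities together with the Sobolev embedding $H^1 \hookrightarrow L^\infty$ are used to close the remaining product and commutator terms into the right-hand side.

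The main obstacle will be the top-order commutators generated by the quasilinear flux $(m^2/\rho)_x$ and by the nonlinear part of the Bohm term. A term such as $\partial_x^s\big((2\brm/\brho)\partial_x m\big)$ paired with $\partial_x^s m$ must first be rearranged via commutator identities to push the extra derivative onto the coefficient $2\brm/\brho$, which is then bounded in $L^\infty$ using $\brho \geq r$ and $\bU \in X_s$; analogous manipulations handle the cubic remainder $\rho_x^3/(2\rho^2)$ and the mixed term $\rho_x\rho_{xx}/\rho$ of the dispersive correction. Once these close, a short-time bootstrap together with Gronwall yields $E_s(t) + F_s(t) \leq C_0 E_s(0)$ on an interval $[0,T_1]$ with $T_1 = T_1(a_0)$, and the Sobolev embedding keeps $\rhos + \rho(\cdot,t)$ pinned between $\tfrac{1}{2}r_0$ and $2R_0$ throughout. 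Finally, the same energy method applied to the difference of two iterates shows that $\Phi$ is a contraction in a lower-order norm, so the Banach fixed point theorem provides the unique local solution, and passing to the supremum over $[0,T_1]$ in the a priori bound gives \eqref{localEE}.
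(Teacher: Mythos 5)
Your overall strategy is essentially the one the paper follows: quasi-linearize \eqref{QHD} around a frozen approximation $\bw=(\brho,\brm)$, establish energy estimates for the linear system, and set up an iteration whose convergence is controlled in a lower-order norm. Your observation about the pointwise lower bound on $\brho$, the Moser-type product estimates, and the contraction in a weaker Sobolev topology all correspond to what the paper does. Two points of divergence and one misstep deserve comment.

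The genuine divergence is in how existence for the quasi-linearized system \eqref{linear_system} is obtained. You propose parabolic regularization of the continuity equation (adding $\varepsilon\rho_{xx}$ and passing $\varepsilon\to 0^+$) or a Galerkin scheme. The paper instead uses a duality argument: it introduces the formal adjoint operator $\cL^*$ and the backward adjoint problem \eqref{eq:adjoint_system}, proves forward energy estimates for it, then establishes negative-norm estimates via the pseudodifferential operators $\La^s$ and commutator bounds (Lemmas \ref{lemma:commutator_estimate}--\ref{lemma:coefficients_Sobolev_space}), and finally invokes Hahn--Banach and the Riesz representation theorem to produce the weak solution, which by the Sobolev embedding is classical. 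The duality route avoids having to justify uniform bounds and passage to the limit for the regularized or truncated problems, at the price of the negative-norm machinery; either route is admissible, but they are not the same. Related to this, the paper also homogenizes the initial data by subtracting off heat-equation solutions $\tw_i$ before iterating, a step your write-up omits.

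The misstep is the claim that the cross-term cancellation in your steps (ii)--(iii) ``leaves after integration by parts the dissipation $\|\rho_x\|_{s+1}^2$ alongside the direct $\mu\|m_x\|_s^2$''. This is not what happens, and cannot happen: the Bohm term is conservative and there is no second-order smoothing acting on $\rho$. What the cancellation actually produces --- both in the paper (see the passage from \eqref{eq_dispersive} to \eqref{eq:est_T1}) and in your alternate pairing with $-\partial_x^{j+2}\rho$ --- is a total time derivative $-\tfrac{k^2}{4}\partial_t\|\partial_x^{j+1}\rho\|_0^2$, which moves to the left-hand side and augments the \emph{energy} (yielding the $\tfrac{k^2}{4}\|\rho\|_{n+1}^2$ piece in \eqref{estimate_higher_order}), not the integrated-in-time dissipation. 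The only dissipation term generated at the linear level is $\mu\|m\|_{n+1}^2$. In particular the $L^2_t H^{s+1}$ control of $\rho_x$ in $F_s$ is not a by-product of this cancellation and requires a separate argument; misreading the mechanism here would derail the Gronwall step if taken literally.
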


\begin{corollary}[a priori estimate]
\label{cor26}
Under the assumptions of Theorem \ref{themlocale}, let $U = (\rho+\rhos,m+\ms)$ with $(\rho,m) \in X_s\big((0,T); \tfrac{1}{2}r_0,2R_0\big)$  be a local solution of the initial value problem of \eqref{QHD} with initial data $U(0)+U_* = (\rho_0+\rhos,m_0+\ms)$ satisfying \eqref{incond}. Then there exists $0 < \e_1 \leq a_0$, sufficiently small, such that, if for any $0 < t  \leq T$ we have $E_s(t)^{1/2} \leq \e_1$, then there holds
\begin{equation}
\label{localaprioriEE}
\big( E_s(t) + F_s(t) \big)^{1/2} \leq C_2 E_s(0)^{1/2},
\end{equation}
where $C_2 = C_2(\e_1) > 0$ is a positive constant independent of $t > 0$.
\end{corollary}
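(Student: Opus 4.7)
The plan is to bootstrap the very same energy estimates that underlie the proof of Theorem \ref{themlocale}, but now using the assumed smallness $E_s(t)^{1/2}\leq\e_1$ to absorb all nonlinear contributions into the viscous-dispersive dissipation rather than into a Gronwall-type exponential. Because $\e_1\leq a_0$ and $H^1(\R)\hookrightarrow L^\infty(\R)$, the bound $E_s(t)^{1/2}\leq\e_1$ guarantees that $\rho+\rhos$ remains trapped in $[\tfrac{1}{2}r_0,2R_0]$ on the whole interval $[0,t]$, so the framework and manipulations of Theorem \ref{themlocale} stay legitimate up to time $t$ and the constants involved can be taken to depend only on $\e_1$, $\rhos$, $\ms$, and the physical parameters $\mu,k,\gamma$.

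Differentiating the perturbation form of \eqref{QHD} $j$ times (with $0\leq j\leq s+1$ for the mass equation and $0\leq j\leq s$ for the momentum equation), pairing with suitable multipliers chosen to exploit the viscous term and the Bohm potential, integrating by parts, and summing, one obtains an identity of the form
\[
\frac{d}{dt}\Psi_s(t)+c_0\Big(\|\rho_x(t)\|_{s+1}^2+\|m_x(t)\|_s^2\Big)\leq C\,\Phi\!\big(E_s(t)^{1/2}\big)\Big(\|\rho_x(t)\|_{s+1}^2+\|m_x(t)\|_s^2\Big),
\]
where the modified energy $\Psi_s(t)$ is equivalent to $\|\rho(t)\|_{s+1}^2+\|m(t)\|_s^2$ (with equivalence constants depending on $\e_1$), $c_0>0$ is supplied by the viscosity together with the dispersive regularity gain, and $\Phi$ is a continuous function with $\Phi(0)=0$, produced by standard Moser/commutator estimates of the Hattori--Li type. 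Choosing $\e_1$ small enough that $C\Phi(\e_1)\leq c_0/2$, the right-hand side is absorbed into the dissipation, and integrating on $[0,t]$ followed by taking the supremum over $[0,t]$ yields $\Psi_s(t)+\tfrac{1}{2}c_0 F_s(t)\leq\Psi_s(0)$. Converting back from $\Psi_s$ to $E_s$ via the equivalence constants gives \eqref{localaprioriEE} with $C_2=C_2(\e_1)$.

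The principal obstacle is precisely the construction of the modified energy $\Psi_s$: when one differentiates the Bohm term $k^2\rho\big((\sqrt{\rho})_{xx}/\sqrt{\rho}\big)_x$ in the momentum equation and tests against the corresponding derivative of $m$, one encounters spatial derivatives of $\rho$ of order higher than what $\|\rho\|_{s+1}$ directly controls; these terms must be rewritten via integration by parts combined with the continuity equation $\rho_t+m_x=0$ in order to trade the top-order derivatives against a time derivative and eventually redirect them into the dissipative quantity $\|\rho_x\|_{s+1}^2$. This mechanism---already present in the proof of Theorem \ref{themlocale}---is exactly what forces the definition of $X_s$ to carry one extra order of regularity on $\rho$ relative to $m$, and it is also the reason why the dissipation appears in the precise $F_s$ form of \eqref{defFs}.
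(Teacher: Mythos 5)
The paper's own proof of Corollary \ref{cor26} is a one-line citation to Theorem~2.2 of Hattori--Li~\cite{HaLi96a}, so there is no detailed paper argument to compare against; what you propose is an attempt to reconstruct the direct nonlinear energy-estimate argument that the reference supplies. The overall strategy (Moser/commutator estimates, smallness absorption, modified energy) is the right one, and you correctly identify that the smallness $E_s(t)^{1/2}\leq\e_1$ is what keeps the density in $[\tfrac12 r_0,2R_0]$ and what lets the constants be taken uniform.

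The genuine gap is in the source of the $\|\rho_x\|_{s+1}^2$ dissipation that you place on the left-hand side of your differential inequality. The manipulation you single out --- integrating by parts in the Bohm term and substituting $m_x=-\rho_t$ from the continuity equation --- is exactly what the paper performs in Lemma~\ref{theorem_zero_order} and in the higher-order estimates (the computation producing $I_1=-\tfrac{k^2}{4}\partial_t\|\pd_x^{j+1}\rho\|_0^2$). But that trade produces a \emph{time derivative} of $\tfrac{k^2}{4}\|\pd_x^{j+1}\rho\|_0^2$, i.e.\ the extra regularity in the modified energy $\Psi_s$, not a dissipative term. Indeed, the paper's linear estimates \eqref{estimate_higher_order}--\eqref{integral_estimate_higher_order} deliver only $\int_0^T\|m_x\|_n^2\,dt$ as dissipation; no $\int_0^T\|\rho_x\|_{n+1}^2\,dt$ term ever appears there. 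Recovering dissipation on $\rho$ requires a genuinely new ingredient: a Kawashima--Shizuta compensating cross term in the energy functional (the Fourier-space incarnation of which is precisely the compensating matrix symbol $\hK$ of Lemma~\ref{lemourK}), i.e.\ a term of the type $\pm\int\pd_x^j\rho\,\pd_x^{j-1}m\,dx$ whose time derivative, after using the momentum equation, yields $-\alpha_*\|\pd_x^j\rho\|_0^2-\tfrac12 k^2\|\pd_x^{j+1}\rho\|_0^2$ plus viscous corrections. Without this cross term the inequality you write down with $\|\rho_x\|_{s+1}^2$ on the left cannot be derived. Your phrase ``the dispersive regularity gain'' gestures at the right phenomenon, but the mechanism you describe does not produce it.

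A secondary issue: you assert that the nonlinear contributions are bounded by $C\Phi(E_s^{1/2})$ times the \emph{dissipation} alone. The conservative form $\partial_x(0,N_2)^\top$ helps (testing against $\pd_x^jm$ and integrating by parts hands you $\pd_x^{j+1}m$), but the low-derivative quadratic pieces of $N_2$, after Cauchy--Schwarz, leave behind undifferentiated energy factors such as $\|\rho\|_0^2$, which are \emph{not} dominated by $F_s$ on the whole line. These must be handled by further integrations by parts and substitutions from the continuity equation (absorbing them into cubic corrections to $\Psi_s$), a step your sketch does not acknowledge. Both omissions are substantive, because $C_2$ in \eqref{localaprioriEE} must be independent of $T$: any uncontrolled term forces a Gronwall exponential, which is exactly what the corollary is designed to avoid.
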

\begin{proof}
Follows directly from the local existence result, Theorem \ref{themlocale} (see, e.g., the proof of Theorem 2.2 in \cite{HaLi96a}).
\end{proof}

\subsection{The linear system}
Our purpose is to formulate the well-posedness for perturbations of a given constant state. Hence, consider deviations from the given state $(\rc, \mc)$, which will be denoted as
\begin{equation*}
\rho = \br + \rc, \qquad m = \brm + \mc.
\end{equation*}
The system \eqref{QHD} in the new perturbation variables $(\br, \brm)$ reads
\begin{equation}
\label{QHD-L-deviation}
	\begin{aligned}
		\br_t+\brm_x &= 0,\\
		\brm_t+\left(\displaystyle\frac{(\brm + \mc)^2}{\br + \rc}+p(\br + \rc)\right)_x &=\mu \brm_{xx}+k^2(\br + \rc)\left(\displaystyle\frac{(\sqrt{\br + \rc})_{xx}}{\sqrt{\br + \rc}}\right)_x.
	\end{aligned}
\end{equation}

Now we deduce a linearized system that will be useful for proving local existence of solutions. For $\br, \rho: \R \rightarrow \R^+$ and $\brm,m :\R \rightarrow \R$ denote $w = (\rho, m)^\top, \bw = (\br, \brm)^\top\in \R^2$. Let $T > 0$ and assume that
\begin{align}
\label{cond_bound}
\sup_{x \in \R, \, t \in [0,T]} \left( \frac{1}{\br + \rc} + \sum_{i=0}^2 |\partial_x^i \bw| \right) \leq \beta_0,
\end{align}
for some constant $\beta_0 > 0$. For such $\bw$ we define
\begin{align*}
\balp(x,t) &= \left (\frac{\brm(x,t) + \mc}{\br(x,t) + \rc} \right )^2 - \gamma (\br(x,t) + \rc)^{\gamma - 1},\\
\bbet(x,t) &= -\frac{2 (\brm(x,t) + \mc)}{\br(x,t) + \rc},
\end{align*}
and the matrix
\begin{align*}
\bA(x,t) = \begin{pmatrix}
0 & -1\\
\balp(x,t) & \bbet(x,t)
\end{pmatrix}.
\end{align*}
Now, let us denote
\begin{equation*}
\zeta = k^2 \dfrac{\br_x}{\br + \rc},\qquad \eta = \dfrac{k^2}{2} \left ( \dfrac{\br_x}{\br + \rc} \right )^2.
\end{equation*}
Moreover, define the operators,
\begin{align*}
&\cT_1 w = \begin{pmatrix}
0 \\ \frac{k^2}{2} \rho_{xxx}
\end{pmatrix},
&&\cT_2 w = \begin{pmatrix}
0 \\ \mu m_{xx}
\end{pmatrix},\\
&\cT_3 w = \begin{pmatrix}
0 \\ - \zeta \rho_{xx}
\end{pmatrix},
&&\cT_4 w = \begin{pmatrix}
0\\ \eta \rho_x
\end{pmatrix},
\end{align*}
and let
\begin{equation}
\label{operator_L}
\cL w := \bA w_x + \sum_{i=0}^4 \cT_i w.
\end{equation}
For any vector valued function $f = (f_1,f_2)^\top$, we consider the following linear system
\begin{equation}
\label{linear_system}
\left\{
\begin{aligned}
\partial_t w &= \cL w + f,\\
w(0) &= w_0.
\end{aligned}
\right.
\end{equation}
This system has the property that if $w = \bw$ and $f = 0$, then $\bw$ solves \eqref{QHD-L-deviation}. Moreover, we denote
\begin{equation}
\label{eq:components_L}
\cL w = \begin{pmatrix} \cL_1 w\\ \cL_2 w \end{pmatrix}.
\end{equation}

\subsection{The zeroth order estimate}

Here we derive an estimate satisfied by smooth solutions to the linear system \eqref{linear_system}. For that purpose, let us define the norm 
\begin{equation*}
\vertiii{w}_{[0,T]}^2 := \sup_{t \in [0,T]} (\| w(t) \|_0^2 + \| \partial_x \rho(t) \|_0^2) + \int_{0}^T  \| \partial_x m(t) \|_0^2 \, dt,
\end{equation*}
for any arbitrary $T > 0$.
\begin{lemma}[zeroth order estimate]
\label{theorem_zero_order}
Suppose that $\bw$ satisfies the bound \eqref{cond_bound} and $w, f \in C_0^{\infty}([0,T] \times \R)$ for some $T > 0$. Then the following estimate holds for $t \in (0,T)$,
\begin{equation}
\label{eq:estimate_0_order}
\begin{aligned}
\partial_t\left ( \frac{1}{2} \| w \|_0^2 + \frac{k^2}{4} \|\rho_x\|_0^2 \right ) &+ \left (\mu - \frac{C_1 \e_1}{2} - \frac{C_3 \e_2}{2} \right) \|m_x\|_0^2 \\
&\leq \frac{1}{2} \left (\frac{C_1}{\e_1} + k^2 C_2 + C_4 + 1 \right) \| w \|_0^2 + \\
&\quad + \frac{1}{2} \left( \frac{k^2}{2} + C_1 \e_1 + k^2 C_2 + \frac{C_3}{\e_2} + C_4 \right) \| \rho_x\|_0^2 +  \\ 
&\quad + \frac{1}{2}\| f \|_0^2  + \frac{k^2}{4}\| f_1 \|_1^2, 
\end{aligned}
\end{equation}
for any $\e_1, \e_2 >0$ such that
\begin{equation*}
\mu - \frac{C_1 \e_1}{2} - \frac{C_3 \e_2}{2} > 0,
\end{equation*}
and explicit constants $C_i> 0,\mbox{ }i=1,...,4$ depending only on $\beta_0$, $\rc$, $\mc$ and the physical parameters of \eqref{QHD}. Moreover,
\begin{equation}
\vertiii{w}_{[0,T]}^2 \leq C(T) \Big( \| w_0\|_0^2 + \| \rho_0 \|_1^2 + \int_0^T \big(\|f(s)\|_0^2 + \|f_1(s)\|_1^2\big) \, ds \Big).
\label{eq:integ_estimate_0_order}
\end{equation}
The constant $C(T)$ in \eqref{eq:integ_estimate_0_order} depends only on $T$, $\beta_0$ and the parameters of \eqref{QHD}.
\end{lemma}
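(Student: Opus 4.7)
The strategy is to work with the augmented energy functional $\cE(t) := \tfrac{1}{2}\|w(t)\|_0^2 + \tfrac{k^2}{4}\|\rho_x(t)\|_0^2$ and derive a differential inequality for $\cE'(t)$ together with the viscous dissipation $\mu\|m_x\|_0^2$. The motivation for the extra $\tfrac{k^2}{4}\|\rho_x\|_0^2$ contribution is the third-order dispersive term $\tfrac{k^2}{2}\rho_{xxx}$ in the momentum equation: pairing it with $m$ and integrating by parts leaves a cross-term $-\tfrac{k^2}{2}\langle m_x, \rho_{xx}\rangle_0$ which cannot be absorbed by the viscous dissipation alone. However, differentiating $\rho_t = -m_x + f_1$ once in $x$ and pairing with $\rho_x$ produces the matching term $+\tfrac{k^2}{2}\langle \rho_{xx}, m_x\rangle_0$, and the two cancel exactly. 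This Korteweg-type cancellation is the heart of the proof.

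Concretely, I would take the $L^2$-inner product of the first equation of \eqref{linear_system} with $\rho$ and of the second with $m$, and integrate by parts (justified because $w, f \in C_0^{\infty}([0,T]\times\R)$) to obtain
\begin{align*}
\tfrac{1}{2}\partial_t \|w\|_0^2 + \mu\|m_x\|_0^2 &= \langle \rho_x, m\rangle_0 + \langle m, \balp\rho_x\rangle_0 + \langle m, \bbet m_x\rangle_0 \\
&\quad + \tfrac{k^2}{2}\langle m, \rho_{xxx}\rangle_0 - \langle m, \zeta\rho_{xx}\rangle_0 + \langle m, \eta\rho_x\rangle_0 + \langle w, f\rangle_0.
\end{align*}
Differentiating the first equation in $x$, pairing with $\tfrac{k^2}{2}\rho_x$ and integrating by parts once more yields
\[
\partial_t\tfrac{k^2}{4}\|\rho_x\|_0^2 = \tfrac{k^2}{2}\langle \rho_{xx}, m_x\rangle_0 + \tfrac{k^2}{2}\langle \rho_x, f_{1,x}\rangle_0.
\]
Adding the two identities eliminates the dispersive cross-term, since $\tfrac{k^2}{2}\langle m, \rho_{xxx}\rangle_0 = -\tfrac{k^2}{2}\langle \rho_{xx}, m_x\rangle_0$.

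The residual terms are then bounded via Cauchy--Schwarz and Young's inequality. The hypothesis \eqref{cond_bound} on $\bw$, $1/(\br+\rc)$ and on $\bw_x, \bw_{xx}$ yields uniform $L^\infty$ control of $\balp, \bbet, \zeta, \eta$ and of the derivatives $\balp_x, \bbet_x, \zeta_x$ by constants depending only on $\beta_0, \rc, \mc$ and the physical parameters $\mu, k, \gamma$; these become $C_1, C_2, C_3, C_4$ in the statement, with the explicit factor $k^2$ in the $C_2$-terms reflecting $\|\eta\|_{L^\infty} = O(k^2)$. The two free parameters $\e_1, \e_2$ enter precisely where an absorption into $\mu\|m_x\|_0^2$ is required: $\e_1$ for the $\balp\rho_x$ contribution (treated by shifting the derivative onto $m$ via integration by parts, producing $\langle m_x, \balp\rho\rangle_0$), and $\e_2$ for the term $\langle\zeta m_x, \rho_x\rangle_0$ arising from the identity $-\langle m, \zeta\rho_{xx}\rangle_0 = \langle\zeta m_x + \zeta_x m, \rho_x\rangle_0$. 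The term $\langle m, \bbet m_x\rangle_0 = -\tfrac{1}{2}\langle \bbet_x, m^2\rangle_0$ is bounded without any free parameter, while $\langle \rho_x, m\rangle_0$, $\langle m, \eta\rho_x\rangle_0$, $\langle w, f\rangle_0$ and $\tfrac{k^2}{2}\langle \rho_x, f_{1,x}\rangle_0$ are handled by standard Young splits. Collecting these bounds produces \eqref{eq:estimate_0_order}.

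For the integrated estimate \eqref{eq:integ_estimate_0_order}, I would fix $\e_1, \e_2$ small enough that $\mu - \tfrac{C_1\e_1}{2} - \tfrac{C_3\e_2}{2} > 0$ (for instance $\e_1 = \mu/(2C_1)$, $\e_2 = \mu/(2C_3)$), integrate \eqref{eq:estimate_0_order} over $[0,t]$, keep the non-negative $\|m_x\|_0^2$ term on the left, and apply Gronwall's inequality to the resulting integral inequality in $\|w(t)\|_0^2 + \tfrac{k^2}{2}\|\rho_x(t)\|_0^2$, absorbing the time integrals of $\|w\|_0^2$ and $\|\rho_x\|_0^2$ on the right. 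Taking $\sup_{t\in[0,T]}$ and reinstating the time integral of $\|m_x\|_0^2$ yields the triple norm on the left of \eqref{eq:integ_estimate_0_order}, with $C(T)=O(e^{KT})$ for some $K$ depending only on $\beta_0$ and the parameters of \eqref{QHD}. The main obstacle throughout is the dispersive cancellation of the first step; everything else is careful but routine accounting of lower-order contributions from the $\bw$-dependent coefficients controlled by \eqref{cond_bound}.
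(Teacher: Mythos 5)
Your proposal is correct and follows essentially the same route as the paper: the augmented energy $\tfrac{1}{2}\|w\|_0^2 + \tfrac{k^2}{4}\|\rho_x\|_0^2$, the Korteweg-type cancellation of the dispersive cross-term via the continuity equation $\rho_t = -m_x + f_1$, absorption of the $\|m_x\|_0^2$ contributions into the viscous dissipation through Young's inequality with free parameters $\e_1,\e_2$, and Gronwall for the integrated estimate. The only differences are bookkeeping ones (the paper substitutes $-m_x = \rho_t - f_1$ directly inside $-\tfrac{k^2}{2}\langle\rho_{xx},m_x\rangle_0$ rather than adding a separately differentiated identity, and estimates $\langle\bA w_x,w\rangle_0$ as a whole by a single Young split rather than integrating $\langle m,\balp\rho_x\rangle_0$ and $\langle m,\bbet m_x\rangle_0$ by parts), but the energy structure, the use of the pointwise bound on $1/(\br+\rc)$ and $\partial_x^i\bw$ to control $\balp,\bbet,\zeta,\eta$, and the final constants are the same.
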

\begin{proof}
Taking the $L^2$-scalar product of \eqref{linear_system} with $w$, we get
\begin{equation}
\label{eq:scalar_product_0_order}
\langle \partial_t w, w \rangle_0  = \langle \bA w_x, w \rangle_0 + \left \langle \sum_{i=0}^4 \cT_i w, w \right\rangle_0 + \langle f, w \rangle_0.
\end{equation}
We also have
\begin{align*}
\langle \partial_t w, w \rangle_0 = \int_\R \left ( \rho \partial_t \rho  +  m \partial_t m\right )\, dx 
= \tfrac{1}{2} \int_\R \left ( (\rho^2)_t + (m^2)_t \right ) \, dx = \tfrac{1}{2}\partial_t(\| w\|_0^2).
\end{align*}
Moreover,
\begin{equation*}
|(\bA w_x)(x,t)| \leq |\bA(x,t)| |w_x(x,t)|,\qquad \text{for all } (x,t) \in \R \times [0,T].
\end{equation*}
Since $\sup_{x \in \R, t \in [0,T]} (\rho + \rc)^{-1}(x,t) \leq \beta_0$ we can find a positive constant $C_1$, depending only on $\beta_0$, $\rc$ and $\mc$, such that
\begin{equation}
\sup_{x \in \R, \, t \in [0,T]} |\bA(x,t)| \leq C_1.
\end{equation}
Applying Young's inequality we deduce
\begin{equation}
\label{eq:est_A}
\begin{aligned}
\langle \bA w_x, w \rangle_0 = \int_{\R} (\bA w_x)^\top w \, dx &\leq \int_\R | \bA| |w_x| |w| \, dx \\
&\leq C_1 \int_\R |w_x| |w| \, dx \\
&\leq C_1 \| w_x\|_0 \| w \|_0 \\
&\leq \frac{C_1}{2 \e_1}\| w\|_0^2 + \frac{C_1 \e_1}{2}\|w_x\|_0^2  \\
%&= \frac{C_1}{2 \e_1}\| w\|_0^2 + \frac{C_1 \e_1}{2}\left ( \int_\R (\rho_x)^2 \, dx + \int_\R (m_x)^2 \, dx \right)  \\
&= \frac{C_1}{2 \e_1}\| w\|_0^2 + \frac{C_1 \e_1}{2} \| \rho_x \|_0^2 + \frac{C_1 \e_1}{2} \|m_x\|_0^2. 
\end{aligned}
\end{equation}
Denote $I = \langle \cT_1 w, w \rangle_0$. Then, upon integration by parts, one gets
\begin{align}\label{eq_dispersive}
I = \frac{k^2}{2} \int_\R \rho_{xxx} m \, dx = -\frac{k^2}{2} \int_\R \rho_{xx} m_x \, dx.
\end{align}
The first component of \eqref{linear_system} implies that $-m_x = \rho_t - f_1$. Substituting into \eqref{eq_dispersive} we get
\begin{align}
\label{eq:util1}
I = \frac{k^2}{2} \int_\R (\rho_t - f_1) \rho_{xx} \, dx = -\frac{k^2}{2} \int_\R \rho_{tx} \rho_x \, dx +
\frac{k^2}{2}\int_\R  (f_1)_x \rho_x \, dx
\end{align}
We clearly have
\[
-\frac{k^2}{2} \int_\R \rho_{tx} \rho_x \, dx = -\frac{k^2}{4} \partial_t \int_\R (\rho_x)^2 \, dx = -\frac{k^2}{4} \partial_t(\|\rho_x\|_0^2),
\]
and,
\[
\frac{k^2}{2}\int_\R  (f_1)_x \rho_x \, dx \leq \frac{k^2}{2} \| f_1 \|_1 \| \rho_x \|_0 \leq \frac{k^2}{4} \|f_1\|_1^2 + \frac{k^2}{4} \| \rho_x \|_0^2.
\]
Substituting back into \eqref{eq:util1} we obtain
\begin{equation}
\label{eq:est_T1}
\langle \cT_1 w, w \rangle_0 \leq -\frac{k^2}{4} \partial_t(\|\rho_x\|_0^2) + \frac{k^2}{4} \|f_1\|_1^2 + \frac{k^2}{4} \| \rho_x \|_0^2.
\end{equation}
Now, integrate by parts to get
\[
\langle \cT_2 w, w \rangle_0 = \mu \int_\R m m_{xx} \, dx = -\mu \| m_x \|_0^2.
\]
We also have, after integration by parts, that
\begin{align*}
\langle \cT_3 w, w \rangle_0 &= - \int_\R \zeta \rho_{xx} m \, dx =  \int_\R \zeta_x m \rho_x \, dx + \int_\R \zeta m_x \rho_x \, dx.
\end{align*}

Observe that $\zeta_x$ and $\zeta$ involve only derivatives up to second and first order, respectively. Hence, due to the bound \eqref{cond_bound}, we can find positive constants $C_2$ and $C_3$ depending only on $\beta_0$, such that
\begin{align*}
\sup_{x \in \R, \, t \in [0,T]} | \zeta_x | \leq C_2, \qquad
\sup_{x \in \R, \, t \in [0,T]} | \zeta | \leq C_3.
\end{align*}
Henceforth, we obtain
\begin{align}
\langle \cT_3 w, w \rangle_0 &\leq  C_2 \int_\R  |m| |\rho_x| \, dx + C_3 \int_\R  |m_x| |\rho_x| \, dx \nonumber\\
&\leq C_2 \| m \| \|_0 \rho_x \|_0 + C_3 \| m_x \|_0 \| \rho_x \|_0 \nonumber\\
&\leq \frac{C_2}{2} \| m \|_0^2 + \frac{C_2}{2} \| \rho_x \|_0^2 + \frac{C_3}{2 \e_2} \| \rho_x \|_0^2 + \frac{C_3 \e_2}{2} \| m_x \|_0^2 \nonumber\\
&\leq \frac{C_2}{2} \| w \|_0^2 + \frac{C_2}{2} \| \rho_x \|_0^2 + \frac{C_3}{2 \e_2} \| \rho_x \|_0^2 + \frac{C_3 \e_2}{2} \| m_x \|_0^2. \label{eq:est_T3}
\end{align}
Moreover, from the definition of $\cT_4$ we have
\begin{align*}
\langle \cT_4 w, w \rangle_0 = \int_\R \eta \rho_x m \, dx.
\end{align*}
Denote
\begin{equation*}
C_4(T) := \sup_{x \in \R, \, t \in [0,T]}|\eta|.
\end{equation*}
Then we have
\begin{align}
\langle \cT_4 w, w \rangle_0 &\leq C_4 \int_\R |\rho_x| |m| \, dx \nonumber\\
%&\leq C_4 \| \rho_x \|_0 \| m \|_0 \nonumber\\
&\leq \frac{C_4}{2} \| \rho_x \|_0^2 + \frac{C_4}{2} \| m \|_0^2 \nonumber\\
&\leq \frac{C_4}{2} \| \rho_x \|_0^2 + \frac{C_4}{2} \| w \|_0^2. \label{eq:est_T4}
\end{align}
Finally, from $\langle f, w \rangle_0 \leq \| f \|_0 \| w\|_0 \leq \frac{1}{2} \| f \|_0^2 + \frac{1}{2} \|w\|_0^2$ and substituting  \eqref{eq:est_A}, \eqref{eq:est_T1} - \eqref{eq:est_T4} into \eqref{eq:scalar_product_0_order}, we arrive at estimate \eqref{eq:estimate_0_order}.

Now, for any $c \in (0,1)$, let us choose $\e_1$ and $\e_2$ such that
\begin{equation*}
\frac{C_1 \e_1}{2} +  \frac{C_3 \e_2}{2}  \leq (1 - c) \mu.
\end{equation*}
We obtain
\begin{equation}
\partial_t \Big( \tfrac{1}{2} \| w \|_0 ^2 + \tfrac{1}{4} k^2 \|\rho_x\|_0^2 \Big) + c \mu \| m_x \|_0^2 \leq C \Big( \tfrac{1}{2}\| w \|_0 ^2 + \tfrac{1}{4} k^2 \|\rho_x\|_0^2 +\| f\|_0^2 + \| f_1 \|_1^2 \Big), \label{eq:estimate_0_order_2}
\end{equation}
where $C$ depends only on $\beta_0$. Then, inequality \eqref{eq:estimate_0_order_2} clearly implies
\begin{align}\label{eq:util4}
\partial_t \Big( \tfrac{1}{2} \| w \| ^2 + \tfrac{1}{4} k^2 \|\rho_x\|^2 \Big) \leq C \Big( \tfrac{1}{2}\| w \| ^2 + \tfrac{1}{4}k^2 \|\rho_x\|^2 +\| f\|^2 + \| f_1 \|_1^2 \Big).
\end{align}
Apply Gronwall's inequality to \eqref{eq:util4} in order to obtain
\begin{align}
\tfrac{1}{2}\| w(t)\|_0^2  + \tfrac{1}{4} k^2\| \partial_x \rho(t)\|_0^2 &\leq e^{C t} \left (  \tfrac{1}{2} \| w_0 \|_0^2 + \frac{1}{4} k^2 \| \partial_x \rho_0 \|_0^2 \right ) + \nonumber\\
&\quad + C \int_{0}^T e^{C(t-s)} \Big(\| f(s) \|_0^2 + \| f_1(s) \|_1^2 \Big) \, ds, \label{gronwall1}
\end{align}
for all $t \in [0,T]$ and where $C = C(T)>0$ depends only on $T$ and $\beta_0$. Substituting \eqref{gronwall1} into \eqref{eq:estimate_0_order_2} and integrating from $0$ to $\tilde{t}$ yields \eqref{eq:integ_estimate_0_order}.
\end{proof}

\subsection{Higher order estimates}

For $n \in \N$, consider the norm
\begin{equation*}
\vertiii{w}_{n,[0,T]}^2 := \sup_{t \in [0,T]} \Big(\| \rho(t) \|_{n+1}^2 + \| m(t) \|_n^2\Big) + \int_{0}^T  \|m_x(t)\|_n^2 \, dt.
\end{equation*}
The following result establishes estimates of higher order on the solutions.
\begin{lemma}[higher order estimate]
Let $n \in \N$, $T > 0$ and $c \in (0,1)$. Suppose 
\begin{align}
\label{cond_bound_higher_order}
\sup_{x \in \R, \, t \in [0,T]} \left( \frac{1}{\br + \rc} + \sum_{i=0}^2 |\partial_x^i \bw| \right) + \vertiii{\bw}_{n,[0,T]}^2 \leq \beta_n,
\end{align}
where $\beta_n > 0$ is a constant.
Then, we have
\begin{equation}
\begin{aligned}
\pd_t \Big( \tfrac{1}{2} \| w \|_n^2 + \tfrac{1}{4} k^2 \| \rho \|_{n+1}^2 \Big) + c \mu \| m \|_{n+1}^2 \leq C_n \Big( \tfrac{1}{2} \| w \|_n^2 + \tfrac{1}{4} k^2 \| \rho \|_{n+1}^2 + \| f \|_n^2 + \| f_1 \|_{n+1}^2 \Big),
\label{estimate_higher_order}
\end{aligned}
\end{equation}
for all $t \in (0,T)$, and
\begin{equation}
\vertiii{w}_{n,[0,T]}^2 \leq C_n(T) \big( \| w_0 \|_n^2 + \| \rho_0 \|_{n+1}^2 \big) + C_n(T) \int_{0}^{T} \!\big( \| f(s) \|_n^2 + \| f_1(s) \|_{n+1}^2 \big) \, ds, \label{integral_estimate_higher_order}
\end{equation}
where $C_n$ depends only on $T$, $\beta_n$, $\rc$, $\mc$ and the parameters of \eqref{QHD}.
\end{lemma}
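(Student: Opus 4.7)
The plan is to reduce the higher-order case to the zeroth-order case via commutator estimates, following the standard strategy for quasilinear symmetric-type systems. First, for a multi-index $\alpha$ with $|\alpha| \leq n$, I would differentiate the linear system \eqref{linear_system} to obtain
\[
\pd_t (\pd_x^\alpha w) = \cL (\pd_x^\alpha w) + [\pd_x^\alpha, \bA \pd_x] w + \sum_{i=1}^4 [\pd_x^\alpha, \cT_i] w + \pd_x^\alpha f,
\]
so that $\pd_x^\alpha w$ solves a linear system of the same form as \eqref{linear_system} but with an augmented forcing term $f^\alpha := \pd_x^\alpha f + [\pd_x^\alpha, \bA \pd_x] w + \sum_{i=1}^4 [\pd_x^\alpha, \cT_i] w$. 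Note that the $\cT_i$ for $i=1,2$ have constant coefficients, so their commutators with $\pd_x^\alpha$ vanish; only the commutators with $\cT_3$ and $\cT_4$ (whose coefficients involve $\zeta$, $\zeta_x$, $\eta$ and thus derivatives of $\br$) and with $\bA \pd_x$ contribute nontrivially.

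Next, I would apply the zeroth-order estimate \eqref{eq:estimate_0_order} of Lemma \ref{theorem_zero_order} to $\pd_x^\alpha w$ with forcing $f^\alpha$. This yields, for each $|\alpha| \leq n$, an inequality of the form
\[
\pd_t \Big( \tfrac{1}{2}\|\pd_x^\alpha w\|_0^2 + \tfrac{k^2}{4}\|\pd_x^\alpha \rho_x\|_0^2 \Big) + c\mu \|\pd_x^\alpha m_x\|_0^2 \leq C \Big( \|\pd_x^\alpha w\|_0^2 + \|\pd_x^\alpha \rho_x\|_0^2 + \|f^\alpha\|_0^2 + \|f^\alpha_1\|_1^2 \Big),
\]
after choosing $\e_1,\e_2$ as in the proof of Lemma \ref{theorem_zero_order}. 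Summing over $|\alpha| \leq n$ and recognizing the resulting sums as squared Sobolev norms produces exactly the left-hand side of \eqref{estimate_higher_order} together with a remainder controlled by $\|f\|_n^2 + \|f_1\|_{n+1}^2$ plus the commutator contributions.

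The main technical step, which I expect to be the real obstacle, is the control of the commutators in $L^2$. Using the Moser-type (Kato--Ponce) calculus inequality
\[
\|[\pd_x^\alpha, g] h\|_0 \leq C \big( \|\pd_x g\|_{L^\infty} \|h\|_{|\alpha|-1} + \|g\|_{|\alpha|}\|h\|_{L^\infty} \big),
\]
together with Sobolev's embedding $H^1(\R) \hookrightarrow L^\infty(\R)$, I would estimate $[\pd_x^\alpha, \bA \pd_x] w$ in terms of $\|\bA\|_n$ and $\|w\|_n$, and similarly for $[\pd_x^\alpha, \cT_i]w$ with $i=3,4$ in terms of $\|\zeta\|_n$, $\|\eta\|_n$ and Sobolev norms of $w$. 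The hypothesis \eqref{cond_bound_higher_order} guarantees that the coefficients $\bA$, $\zeta$, $\eta$, together with their relevant derivatives, are bounded in the appropriate Sobolev norms by a constant depending only on $\beta_n$, $\rc$, $\mc$, and $\gamma$ (here one uses smoothness of $\rho \mapsto \rho^{\gamma-1}$ and of $\rho \mapsto 1/\rho$ on $[1/\beta_n,\infty)$, i.e., composition estimates). Hence one obtains
\[
\|f^\alpha\|_0^2 + \|f^\alpha_1\|_1^2 \leq C_n \big( \|w\|_n^2 + \|\rho_x\|_n^2 + \|f\|_n^2 + \|f_1\|_{n+1}^2 \big),
\]
which, once inserted into the summed zeroth-order inequality, yields \eqref{estimate_higher_order}.

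Finally, to obtain the integrated estimate \eqref{integral_estimate_higher_order}, I would apply Gronwall's inequality to \eqref{estimate_higher_order} exactly as in the last step of the proof of Lemma \ref{theorem_zero_order}, then integrate the resulting differential inequality in time over $[0,T]$ to absorb the dissipative $c\mu\|m\|_{n+1}^2$ term into the integral on the left-hand side of the triple-norm. The resulting constant $C_n(T)$ will depend on $T$, $\beta_n$, $\rc$, $\mc$, and the parameters of \eqref{QHD}, as claimed.
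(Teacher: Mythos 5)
Your strategy mirrors the paper's: differentiate the linear system, observe that only $\bA\pd_x$, $\cT_3$, $\cT_4$ contribute nonzero commutators with $\pd_x^j$ (and that $[\pd_x^j,\cL_1]=0$, so the first component keeps the structure needed for the dispersive trick), bound the commutators by Moser--Leibniz calculus and the smallness/Sobolev bounds on the frozen background, sum over $j$, and conclude with Gronwall. The only stylistic difference is that you invoke Lemma \ref{theorem_zero_order} as a black box with an augmented forcing $f^\alpha$, whereas the paper re-derives the individual estimates; both are legitimate, since $f^\alpha_1 = \pd_x^j f_1$ has the right form.

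However, the claim that ``summing over $|\alpha|\le n$ \dots produces exactly the left-hand side of \eqref{estimate_higher_order}'' is not correct, and the omission is a real (if small) gap. Applying the zeroth-order inequality to $\pd_x^j w$ and summing over $j=0,\dots,n$ produces
\[
\pd_t\Bigl(\tfrac12\|w\|_n^2 + \tfrac{k^2}{4}\|\rho_x\|_n^2\Bigr) + c\mu\|m_x\|_n^2
\]
on the left, not $\pd_t\bigl(\tfrac12\|w\|_n^2 + \tfrac{k^2}{4}\|\rho\|_{n+1}^2\bigr) + c\mu\|m\|_{n+1}^2$. The missing term $c\mu\|m\|_0^2$ is harmless (it is dominated by $\|w\|_n^2$ and can be moved to the right), but the missing term $\tfrac{k^2}{4}\|\rho\|_0^2$ sits inside a time derivative and cannot simply be borrowed from the right-hand side. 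To upgrade $\|\rho_x\|_n^2$ to $\|\rho\|_{n+1}^2$ inside the $\pd_t$, you must separately estimate $\pd_t\|\rho\|_0^2$: take the $L^2$ pairing of the first equation $\rho_t = -m_x + f_1$ with $\tfrac{k^2}{2}\rho$ (this is precisely the paper's inequality \eqref{energy_estimate_continuity_equation}), which yields $\tfrac{k^2}{4}\pd_t\|\rho\|_0^2 \le C(\|\rho\|_0^2 + \|\rho_x\|_0^2 + \|m\|_0^2 + \|f_1\|_0^2)$, and then add this to the summed inequality. Note that this omission does not affect the integrated estimate \eqref{integral_estimate_higher_order} (since $\|\rho\|_{n+1}^2 \le \|w\|_n^2 + \|\rho_x\|_n^2$, the triple norm is still controlled), but it does affect the pointwise differential inequality \eqref{estimate_higher_order} as stated.
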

\begin{proof}
Differentiating \eqref{linear_system} we obtain that the equation satisfied by $\pd^j_x w$ is
\begin{equation}
\label{eq:higher_order_sys}
\left\{
\begin{aligned}
(\pd_x^j w)_t &= \cL \pd^j_x w +[\pd^j_x,\cL] w + \pd^j_x f,\\
\pd_x^j w(0) &= \pd_x^j w_0.
\end{aligned}
\right.
\end{equation}
for $j \in \N_0$. Suppose $j \in \{0,...,n\}$ and take the scalar product of \eqref{eq:higher_order_sys} with $\pd_x^j w$. The result is
\begin{equation*}
\langle (\pd_x^j w)_t, \pd_x^j w \rangle_0 = \langle \cL \pd^j_x w, \pd_x^j w \rangle_0 +\langle  [\pd^j_x,\cL] w, \pd_x^j w \rangle_0 + \langle  \pd^j_x f, \pd_x^j w \rangle_0.
\end{equation*}
Clearly we have
\[
\langle (\pd_x^j w)_t, \pd_x^j w \rangle_0 = \tfrac{1}{2} \pd_t (\| \pd_x^j w \|_0^2).
\]
Moreover,
\begin{equation*}
\cL \partial_x^j w = \bA \pd_x^{j+1} w + \sum_{i=1}^{4} \cT_i(\pd_x^j w). 
\end{equation*}
In the sequel, $C$ denotes a positive constant that may depend only on $\beta_n$, $\rc$, $\mc$ and on the parameters of \eqref{QHD}, and which may change from line to line. We have
\begin{align}
\langle  \bA \pd_x^{j+1} w, \pd_x^j w \rangle_0 &\leq C \| \pd_x^{j+1} w \|_0 \| \pd_x^j w \|_0 \nonumber\\
&\leq C \e_1 \| \pd_x^{j+1} w \|_0^2 + C(\e_1) | \pd_x^j w \|_0^2 \nonumber \\
&\leq C \e_1 \| \pd_x^{j+1} \rho \|_0^2 + C \e_1 \| \pd_x^{j+1} m \|_0^2 + C(\e_1) | \pd_x^j w \|_0^2. \label{estimate_A}
\end{align}
Now we shall estimate the contribution from the dispersive term. Integrating by parts we get
\begin{equation}
\label{eq:dispersive_higher_order}
\langle \cT_1 \pd_x^j w, \pd_x^j w \rangle_0 = \frac{k^2}{2} \int_\R (\pd_x^j m) (\pd_x^{j+3} \rho) \, dx
= - \frac{k^2}{2} \int_\R (\pd_x^{j+1} m) (\pd_x^{j+2} \rho) \, dx.
\end{equation}
The first component of \eqref{eq:higher_order_sys} reads (see equation \eqref{eq:components_L}),
\begin{equation*}
(\pd_x^j \rho)_t = - \pd_x^{j+1} m + [\pd_x^j, \cL_1] w + \pd_x^j f_1.
\end{equation*}
Since $[\pd_x^j, \cL_1] w = 0$, we obtain $\pd_x^{j+1} m = -(\pd_x^j \rho)_t + \pd_x^j f_1$. Substituting into \eqref{eq:dispersive_higher_order} we get
\begin{align}
\langle \cT_1 \pd_x^j w, \pd_x^j w \rangle_0 = I_1 + I_2, \label{eq:T1}
\end{align}
where
\[
I_1 := \frac{k^2}{2}\int_\R (\pd_x^j \rho)_t (\pd_x^{j+2} \rho) \, dx, \quad I_2 := - \frac{k^2}{2}\int_\R (\pd_x^j f) (\pd_x^{j+2} \rho) \, dx.
\]
First, notice that
\[
I_1 = -\frac{k^2}{2}\int_\R \pd_x ( (\pd_x^j \rho)_t ) (\pd_x^{j+1} \rho) \, dx = -\frac{k^2}{2}\int_{\R} \pd_t (\pd_x^{j+1} \rho) (\pd_x^{j+1} \rho) \, dx = - \frac{k^2}{4} \pd_t \| \pd_x^{j+1} \rho \|_0^2. 
\]
On the other hand, we can estimate $I_2$ by
\[
I_2 = -\frac{k^2}{2}\int_\R (\pd_x^{j+1} f) (\pd_x^{j+1}\rho) \, dx \leq C \|f_1\|_{j+1}^2 + C \|\pd_x^{j+1}\rho\|_0^2. 
%\label{eq:I2}
\]
Substitution into \eqref{eq:T1} yields
\begin{equation}
\label{estimate_T1}
\langle \cT_1 \pd_x^j w, \pd_x^j w \rangle_0 \leq - \tfrac{1}{4} k^2 \pd_t \| \pd_x^{j+1} \rho \|_0^2
+ C \|f_1\|_{j+1}^2 + C \|\pd_x^{j+1}\rho\|_0^2.
\end{equation}
Now, let us consider $\langle \cT_2 \pd_x^j w, \pd_x^j w\rangle_0$. Integrate by parts in order to obtain,
\[
\langle \cT_2 \pd_x^j w, \pd_x^j w \rangle_0 = \mu \int_{\R} (\pd_x^j m) (\pd_x^{j+2} m) dx = -\mu \int_\R (\pd_x^{j+1} m)^2 \, dx = -\mu \| \pd_x^{j+1} m \|_0^2. 
%\label{eq:T2}
\]
Then, we deduce
\[
\langle \cT_3 \pd_x^j w, \pd_x^j w \rangle_0= - \int_{\R} \zeta (\pd_x^j m) (\pd_x^{j+2} \rho) \, dx = \int_\R  \zeta_x (\pd_x^j m) (\pd_x^{j+1} \rho) \, dx
+ \int_\R \zeta (\pd_x^{j+1} m) (\pd_x^{j+1} \rho) \, dx.
\]
Notice that $\left \| \zeta \right \|_{L^\infty}$, $\left \| \zeta_x \right \|_{L^\infty}$ and $\left \| \eta \right \|_{L^\infty}$ are bounded by a constant depending only on $\beta_n$ and $\rc$. Therefore,
\begin{equation}
\label{estimate_T3}
\begin{aligned}
\langle \cT_3 \pd_x^j w, \pd_x^j w \rangle_0 &\leq \| \zeta_x \|_{L^\infty}  
\int_\R | \pd_x^j m | | \pd_x^{j+1} \rho | \, dx + \| \zeta \|_{L^\infty} \int_\R | \pd_x^{j+1} m | | \pd_x^{j+1} \rho | \, dx \\
&\leq C \| \pd_x^j m \|_0 \| \pd_x^{j+1} \rho \|_0 + C \| \pd_x^{j+1} m \|_0 \| \pd_x^{j+1} \rho \|_0 \\
&\leq C \| \pd_x^j m \|_0^2 + \frac{C}{\e_2} \|\pd_x^{j+1} \rho \|_0^2 + C \e_2 \| \pd_x^{j+1} m\|_0^2. 
\end{aligned}
\end{equation}
Now,
\begin{align}
\langle \cT_4 \pd_x^j w, \pd_x^j w \rangle_0
&= \int_{\R} \eta (\pd_x^j m) (\pd_x^{j+1}\rho) \, dx \nonumber\\
&\leq \| \eta \|_{L^\infty} \!\int_\R |\pd_x^j m| |\pd_x^{j+1}\rho| \, dx \nonumber\\
%&\leq C \| \pd_x^j m \|_0 \| \pd_x^{j+1}\rho \|_0 \nonumber\\
&\leq C \big( \| \pd_x^j m \|_0^2 +  \| \pd_x^{j+1}\rho \|_0^2 \big). \label{estimate_T4}
\end{align}
Furthermore,
\[
\langle \pd_x^j f, \pd_x^j w \rangle_0 \leq \| \pd_x^j f \|_0 \| \pd_x^j w\|_0 \leq C \big( \| \pd_x^j f \|_0^2 +  \| \pd_x^j w \|_0^2 \big).
\]

Now, let us consider the contribution from the term involving the commutator, which reads $\langle [\pd_x^j, \cL] w, \pd_x^j w \rangle_0$. If $j = 0$, we have that $[\pd_x^j, \cL]w = 0$. Therefore we will assume that $j \geq 1$. Notice that we also have $[\pd_x^j, \cL_1] w = 0$. Let us first prove a statement that will be used later.
If $i \in \{0,...,j-1\}$, then $\pd_x^{j-i}\balp$ and $\pd_x^{j-i}\bbet$ contain derivatives up to order $j$ of $\br$ and $\brm$. Moreover, $\pd_x^{j-i}\zeta$ and $\pd_x^{j-i}\eta$ contain derivatives up to order $j+1$ or $\br$. Therefore,
\begin{equation*}
\| \pd_x^{j-i}\balp \|_0\mbox{, }\| \pd_x^{j-i}\bbet \|_0 \mbox{, }\| \pd_x^{j-i}\zeta \|_0 \mbox{, }\| \pd_x^{j-i}\eta \|_0
\leq C(\beta_n, \rc, \mc),
\end{equation*}
for $i \in \{0,...,j-1\}$. By the Leibnitz formula we have
\begin{equation*}
\pd_x^j(\balp \pd_x \rho) = \sum_{i=0}^j {j \choose i} (\pd_x^{j-i} \balp) (\pd_x^{i+1} \rho).
\end{equation*}
Let us denote $a_1 =  \pd_x^j(\balp \pd_x \rho) - \balp \pd_x^{j+1}\rho $. Then,
\[
\begin{aligned}
\left \| a_1 \right \|_0
= \left \| \sum_{i=0}^{j-1} {j \choose i} (\pd_x^{j-i} \balp) (\pd_x^{i+1} \rho) \right \|_0 &\leq C \sum_{i=0}^{j-1}  \| (\pd_x^{j-i} \balp) (\pd_x^{i+1} \rho)  \|_0 \\&\leq C \sum_{i=0}^{j-1}  \| \pd_x^{j-i} \balp \|_0 \| \pd_x^{i+1} \rho \|_{L^\infty} .
\end{aligned}
\]
Also, by the Sobolev embedding theorem, we have the estimate 
\begin{equation*}
\| \partial_x^{i+1} \rho \|_{L^\infty} \leq C \| \rho \|_{j+1},\qquad i \in \{0,...,j-1\}.
\end{equation*}
Therefore, $\left \| a_1 \right \|_0 \leq C \| \rho \|_{j+1}$ and
%\begin{equation*}
%\left \| a_1 \right \|_0 \leq C \| \rho \|_{j+1}.
%\end{equation*}
%Hence,
\begin{equation}
\langle \pd_x^j(\balp \pd_x \rho) - \balp \pd_x^{j+1}\rho, \pd_x^j m \rangle_0 \leq C \| \rho \|_{j+1} \| m \|_j
\leq C \| \rho \|_{j+1}^2 + C \| m \|_{j}^2. \label{estimate_commutator1}
\end{equation}
Now, denote $a_2 = \pd_x^j(\bbet \pd_x m) - \bbet \pd_x^{j+1}m$. We clearly have the estimate
\[
\begin{aligned}
\left \| a_2 \right \|_0
= \left \| \sum_{i=0}^{j-1} {j \choose i} (\pd_x^{j-i} \bbet) (\pd_x^{i+1} m) \right \|_0 &\leq C \sum_{i=0}^{j-1}  \| (\pd_x^{j-i} \bbet) (\pd_x^{i+1} m)  \|_0
\\
&\leq C \sum_{i=0}^{j-1}  \| \pd_x^{j-i} \bbet \|_0  \| \pd_x^{i+1} m \|_{L^\infty}.
\end{aligned}
\]
Since we have $\| \partial_x^{i+1} m \|_{L^\infty} \leq C \| m \|_{j+1}$ for all $i \in \{0,...,j-1\}$ then $\left \| a_2 \right \|_0 \leq C \| m \|_{j+1}$ and
\begin{equation}
\label{estimate_commutator2}
\langle \pd_x^j(\bbet \pd_x m) - \bbet \pd_x^{j+1}m, \pd_x^j m \rangle_0 \leq C \| m \|_{j+1} \| m \|_j \leq C \e_3 \| m \|_{j+1}^2 + \frac{C}{\e_3} \| m \|_j^2.
\end{equation}
Observe that $[\pd_x^j, \cT_1] = [\pd_x^j, \cT_2] = 0$. Let us now estimate the contribution from $\cT_3$:
\begin{align*}
\langle [\pd_x^j, \cT_3]\rho, \pd_x^j m \rangle_0 &=
- \int_\R \left ( \pd_x^j ( \zeta \pd_x^2 \rho) - \zeta \pd_x^{j+2}\rho \right ) (\pd_x^j m) \, dx \\
&= - \sum_{i=0}^{j-1} {j \choose i} \int_\R (\pd_x^{j-i}\zeta) (\pd_x^{i+2} \rho) (\pd_x^j m) \, dx \\
&\leq C \sum_{i=0}^{j-1} \| \pd_x^{j-i}\zeta \|_0 \| (\pd_x^{i+2} \rho) (\pd_x^j m )\|_0 \\
&\leq C \sum_{i=0}^{j-1} \| (\pd_x^{i+2} \rho)( \pd_x^j m) \|_0 \\
&\leq C \sum_{i=0}^{j-1} \| \pd_x^{i+2} \rho \|_0 \| \pd_x^j m \|_{L^\infty}.
\end{align*}
\begin{equation*}
\end{equation*}
We have $i \in \{ 0,...,j-1 \}$ or, equivalently, $i+2 \in \{2,...,j+1\}$. Therefore $\| \pd_x^{i+2} \rho \| \leq \| \rho\|_{j+1}$. Moreover, $\| \pd_x^j m \|_{L^\infty} \leq C \| m \|_{j+1}$. Whence we get
\begin{equation}
\langle [\pd_x^j, \cT_3]\rho, \pd_x^j m \rangle_0 \leq C \| \rho\|_{j+1} \| m \|_{j+1}
\leq \frac{C}{\e_4} \| \rho \|_{j+1}^2 + C \e_4 \| m \|_{j+1}^2. \label{estimate_commutator_T3}
\end{equation}
Finally, let us consider the contribution from $\cT_4$. We have 
\begin{align*}
\langle [\pd_x^j, \cT_4]\rho, \pd_x^j m \rangle_0 &=
\int_\R \left ( \pd_x^j ( \eta \pd_x \rho) - \eta \pd_x^{j+1}\rho \right ) (\pd_x^j m) \, dx \\
&= \sum_{i=0}^{j-1} {j \choose i} \int_\R (\pd_x^{j-i}\eta) (\pd_x^{i+1} \rho) (\pd_x^j m) \, dx\\
&\leq C \sum_{i=0}^{j-1} \| \pd_x^{j-i}\eta \|_0 \| (\pd_x^{i+1} \rho)( \pd_x^j m) \|_0\\
&\leq C \sum_{i=0}^{j-1} \| (\pd_x^{i+1} \rho)( \pd_x^j m) \|_0 \\
&\leq C \sum_{i=0}^{j-1} \| \pd_x^{i+1} \rho \|_{L^\infty} \| \pd_x^j m \|_0.
\end{align*}
Since $i \in \{ 0, ..., j-1 \}$, we have $i+1 \in \{1, ..., j \}$ and $\| \pd_x^{i+1} \rho \|_{L^\infty} \leq \| \pd_x^{j} \rho \|_{L^\infty} \leq C \| \rho \|_{j+1}$.
Moreover, there holds $\| \pd_x^j m \|_0 \leq \| m \|_{j}$. This implies that,
\begin{equation}
\left | \langle [\pd_x^j, \cT_4]\rho, \pd_x^j m \rangle_0 \right | \leq C \| \rho \|_{j+1} \| m \|_{j}
\leq C \big( \| \rho\|_{j+1}^2 +  \| m \|_{j}^2 \big). 
\label{estimate_commutator_T4}
\end{equation}
The first component of \eqref{linear_system} is $\rho_t = - m_x + f_1$. Taking the scalar product of this equation with $k^2 \rho/2$ we infer
\begin{equation}
\label{energy_estimate_continuity_equation}
\frac{k^2}{4} \pd_t ( \| \rho \|_0^2 ) \leq C \left ( \| \rho \|_0^2 + \| \rho_x \|_0^2 + \| m \|_0^2 + \| f_1 \|_0^2 \right ).
\end{equation}

Let $c \in (0,1)$ be an arbitrary constant. Using estimates \eqref{estimate_A}, and \eqref{estimate_T1}  thru \eqref{energy_estimate_continuity_equation}, summing for 
$j \in \{0,...,n\}$, and choosing $\e_i > 0$, $i \in \{1,...,4\}$ sufficiently small, we obtain
\[
\pd_t \Big( \tfrac{1}{2} \| w \|_n^2 + \tfrac{1}{4} k^2 \| \rho \|_{n+1}^2 \Big) + c \mu \| m \|_{n+1}^2 \leq C \Big( \| w \|_n^2 + \| \rho\|_{n+1}^2 + \| f \|_n^2 + \| f_1 \|_{n+1}^2 \Big). 
\]
This last inequality implies \eqref{estimate_higher_order}. Similarly to the proof of Lemma \ref{theorem_zero_order}, applying Gronwall's inequality to \eqref{estimate_higher_order} we obtain \eqref{integral_estimate_higher_order}.
\end{proof}

\subsection{Existence of solutions to the linear system}

Let $\phi,\mbox{ }g: \R \rightarrow \R^2$. The formal adjoint of operator \eqref{operator_L} reads
\label{eq:components_adj_L}
\begin{equation}
\cL^* \phi = \begin{pmatrix} \cL_1^* \phi\\ \cL_2^*\phi \end{pmatrix},
\end{equation}
where
\begin{align*}
&\cL_1^*\phi = - (\balp \phi_2)_x - \frac{k^2}{2} (\phi_2)_{xxx} - ( \zeta \phi_2 )_{xx} -  
( \eta \phi_2 )_x,\\
&\cL_2^*\phi = (\phi_1)_x - (\bbet \phi_2)_x + \mu (\phi_2)_{xx},
\end{align*}
and the associated adjoint system is
\begin{equation}
\label{eq:adjoint_system}
\left\{
\begin{aligned}
-\pd_t \phi &= \cL^* \phi + g,\\
\phi(T) &= 0. 
\end{aligned}
\right.
\end{equation}
\subsubsection{Energy estimate for the adjoint system}
Now, we derive an estimate for the solutions to \eqref{eq:adjoint_system}.
\begin{lemma}[energy estimate]
Suppose $\bw, g \in C_0^\infty([0,T] \times \R)$ and
\begin{equation*}
\sup_{x \in \R, \, t \in [0,T]} \frac{1}{\br(x,t) + \rc} \leq C_0.
\end{equation*}
Then, we have
\begin{equation}
\label{estimate_adjoint_system}
\| \phi(t)\|_0^2 + \| \pd_x \phi_2(t) \|_0^2 \leq C(T) \int_{0}^{T} \| g(s) \|_0^2 \, ds,
\end{equation}
for all $t \in [0,T]$, where $C(T)$ depends on $C_0$, $\| \br \|_4$, $\rc$, $\| \brm \|_3$, $\mc$, $T$ and on the physical parameters of \eqref{QHD}.
\end{lemma}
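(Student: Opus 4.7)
The approach mirrors the zeroth-order computation of Lemma~\ref{theorem_zero_order}, adapted to the backward Cauchy problem \eqref{eq:adjoint_system}. I would first reverse time by setting $s := T-t$ and, with a slight abuse of notation, writing $\phi(s)$ and $g(s)$ for the functions evaluated at $T-s$; this turns \eqref{eq:adjoint_system} into the forward problem $\partial_s \phi = \cL^* \phi + g$ with zero initial datum $\phi(0)=0$. Taking the $L^2$-inner product of the forward equation with $\phi$ gives $\tfrac{1}{2}\partial_s \|\phi\|_0^2 = \langle \cL^* \phi, \phi\rangle_0 + \langle g, \phi\rangle_0$, so the task reduces to bounding $\langle \cL^* \phi, \phi\rangle_0$ from above by $C(\|\phi\|_0^2 + \|\partial_x \phi_2\|_0^2) + C\|g\|_0^2$, modulo an extra term $-\tfrac{k^2}{4}\partial_s\|\partial_x \phi_2\|_0^2$ that I plan to transfer to the left-hand side to build the natural energy functional $E(s) := \tfrac{1}{2}\|\phi(s)\|_0^2 + \tfrac{k^2}{4}\|\partial_x \phi_2(s)\|_0^2$.

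The heart of the argument is the dispersive contribution. An integration by parts converts $-\tfrac{k^2}{2}\int (\phi_2)_{xxx}\phi_1\,dx$ into $\tfrac{k^2}{2}\int (\phi_2)_{xx}(\phi_1)_x\,dx$. To handle the uncontrolled factor $(\phi_1)_x$, I would substitute using the second component of the forward equation,
\[
(\phi_1)_x = \partial_s \phi_2 + (\bbet \phi_2)_x - \mu (\phi_2)_{xx} - g_2,
\]
in the same spirit as the trick $-m_x = \rho_t - f_1$ used in \eqref{eq:util1}. This produces (i) the anticipated energy term $-\tfrac{k^2}{4}\partial_s\|\partial_x \phi_2\|_0^2$ after an integration by parts in $x$, (ii) a favorable dissipative term $-\tfrac{\mu k^2}{2}\|(\phi_2)_{xx}\|_0^2$ that will serve as a reservoir for Young-type absorptions, (iii) a remainder $\tfrac{k^2}{2}\int (\phi_2)_{xx}(\bbet \phi_2)_x\,dx$, which after two further integrations by parts is dominated by $C(\|\phi_2\|_0^2 + \|\partial_x \phi_2\|_0^2)$ with a constant depending on $\|\bbet\|_3$ (hence on $\|\br\|_3, \|\brm\|_3$), and (iv) a $g_2$-piece controlled by Young as $\tfrac{\mu k^2}{8}\|(\phi_2)_{xx}\|_0^2 + C\|g\|_0^2$.

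The remaining contributions in $\langle \cL^*\phi, \phi\rangle_0$ are treated by direct estimates. The pieces $-\int (\balp\phi_2)_x \phi_1\,dx$ and $-\int (\eta\phi_2)_x \phi_1\,dx$ are bounded by expanding the outer derivative and applying Cauchy--Schwarz, using $L^\infty$ bounds on $\balp,\balp_x,\eta,\eta_x$ guaranteed by Sobolev embedding and the hypotheses. For $-\int (\zeta\phi_2)_{xx}\phi_1\,dx$ I expand via the product rule; the pieces $\int\zeta_{xx}\phi_2\phi_1\,dx$ and $\int\zeta_x(\phi_2)_x\phi_1\,dx$ are controlled directly (the first requiring $\|\br\|_4$ so that $\zeta_{xx}\in L^\infty$), while the delicate piece $\int\zeta(\phi_2)_{xx}\phi_1\,dx$ is absorbed into a small fraction of the reservoir $-\tfrac{\mu k^2}{2}\|(\phi_2)_{xx}\|_0^2$ via Young's inequality. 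The $\cL_2^*$ contribution yields, after one integration by parts, $-\int\phi_1(\phi_2)_x\,dx + \tfrac{1}{2}\int \bbet_x \phi_2^2\,dx - \mu\|\partial_x\phi_2\|_0^2$, all three easily handled (the last dissipative and discardable). Summing, I obtain $\partial_s E(s) \leq C E(s) + C\|g(s)\|_0^2$ with $C$ depending on the quantities listed in the statement; Gronwall together with $E(0)=0$ yields $E(s)\leq C(T)\int_0^T \|g(\tau)\|_0^2\,d\tau$ for all $s\in[0,T]$, and reverting $t = T-s$ establishes \eqref{estimate_adjoint_system}.

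The principal obstacle is the bookkeeping of the dispersive/viscous interaction: the substitution for $(\phi_1)_x$ simultaneously creates the favorable $-\tfrac{\mu k^2}{2}\|(\phi_2)_{xx}\|_0^2$ and the troublesome $(\bbet\phi_2)_x$ interaction term, and the coefficients in each Young-type absorption must be chosen small enough to leave a positive residue of the dissipation; moreover, the integrations by parts in the $(\bbet\phi_2)_x$ remainder expose up to three derivatives of $\bbet$, which is precisely what pins down the requirements $\|\br\|_4$, $\|\brm\|_3$ appearing in the statement.
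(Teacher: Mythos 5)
Your proof is correct, and it takes a genuinely different route from the paper's. The paper never tests the full adjoint system against $\phi$: instead it tests the \emph{second} component of \eqref{eq:adjoint_system} twice, once against $\phi_2$ (producing control of $-\pd_t\|\phi_2\|_0^2$) and once against $-(\phi_2)_{xx}$ (producing the energy term $-\pd_t\|\pd_x\phi_2\|_0^2$, the dissipation $\|\pd_x^2\phi_2\|_0^2$, and the awkward term $-\int\phi_1(\phi_2)_{xxx}\,dx$), and then it uses the \emph{first} adjoint equation to express $(\phi_2)_{xxx}$ in terms of $\pd_t\phi_1$, generating $-\tfrac{1}{k^2}\pd_t\|\phi_1\|_0^2$. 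You instead test the full system against $\phi$ once, integrate by parts on the dispersive term, and use the \emph{second} adjoint equation to substitute for $(\phi_1)_x$; that single substitution simultaneously produces the $-\tfrac{k^2}{4}\pd_s\|\pd_x\phi_2\|_0^2$ energy piece and the $-\tfrac{\mu k^2}{2}\|(\phi_2)_{xx}\|_0^2$ reservoir. Your scheme is in fact the direct analogue of the paper's own zeroth-order estimate (Lemma \ref{theorem_zero_order}), where $-m_x=\rho_t-f_1$ from the first equation is substituted into $-\tfrac{k^2}{2}\int\rho_{xx}m_x$; here the roles of the two equations are swapped because the dispersion sits in the first adjoint component rather than the second. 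Both approaches arrive at essentially the same energy functional (yours is $\tfrac{1}{2}\|\phi\|_0^2+\tfrac{k^2}{4}\|\pd_x\phi_2\|_0^2$, the paper's is $\tfrac{1}{k^2}\|\phi\|_0^2+\tfrac{1}{2}\|\pd_x\phi_2\|_0^2$), and the accounting of derivatives of $\balp,\bbet,\zeta,\eta$ requires $\|\br\|_4$ and $\|\brm\|_3$ in both, so the dependence of $C(T)$ matches the statement. Your version is arguably the more economical one, requiring a single test function and mirroring the forward estimate, at the cost of a slightly more delicate bookkeeping in the dispersive/viscous interaction, which you correctly flag as the principal obstacle.
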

\begin{proof}
The second component of \eqref{eq:adjoint_system} reads
\begin{equation}
\label{eq:second_component_adjoint_sys}
-\pd_t \phi_2 = (\phi_1)_x - (\bbet \phi_2)_x + \mu (\phi_2)_{xx} + g_2.
\end{equation}
Take the $L^2$-scalar product of \eqref{eq:second_component_adjoint_sys} with $\phi_2$. The result is
\begin{equation*}
-\langle \pd_t \phi_2, \phi_2 \rangle_0 = \langle (\phi_1)_x, \phi_2 \rangle_0 - \langle (\bbet \phi_2)_x, \phi_2 \rangle_0 + \mu \langle (\phi_2)_{xx}, \phi_2 \rangle_0 + \langle g, \phi_2 \rangle_0,
\end{equation*}
Then,
\begin{equation}
\label{eq:adjoint_1}
- \langle \pd_t \phi_2, \phi_2 \rangle_0 = -\tfrac{1}{2} \pd_t (\| \phi_2 \|_0^2).
\end{equation}
In what follows $C$ denotes a constant that may depend only on $\bw$ and on the parameters of \eqref{QHD} and that may change from line to line. We have,
\[
\begin{aligned}
\langle (\phi_1)_x, \phi_2 \rangle_0 = \int_\R (\phi_1)_x \phi_2 \, dx = - \int_\R \phi_1 (\phi_2)_x \, dx &\leq \| \phi_1 \|_0 \| \pd_x \phi_2 \|_0 \nonumber \\
&\leq C(\e_1) \| \phi_1 \|_0^2 + C \e_1 \| \pd_x \phi_1 \|_0^2. 
%\label{adjoint_2}
\end{aligned}
\]
Furthermore, using integration by parts one arrives at
\[
\begin{aligned}
- \langle (\bbet \phi_2)_x, \phi_2 \rangle_0 &= - \int_\R (\bbet \phi_2)_x \phi_2 \, dx = \int_\R \bbet \phi_2 (\phi_2)_x \, dx
= \frac{1}{2} \int_\R \bbet (\phi_2^2)_x \, dx \nonumber\\
&= - \frac{1}{2} \int_\R \bbet_x (\phi_2)^2 \, dx \leq \frac{1}{2} \| \bbet_x \|_{L^\infty} \| \phi_2 \|_0^2 \leq C \| \phi_2 \|_0^2.
%\label{adjoint_3}
\end{aligned}
\]
Consequently, we have 
\[
\mu \langle (\phi_2)_{xx}, \phi_2 \rangle_0 = \mu \int_\R (\phi_2)_{xx} \phi_2 \, dx = -\mu \int_\R (\pd_x \phi_2)^2 \, dx
= -\mu \| \pd_x \phi_2 \|_0^2.
\]
Likewise, there holds the estimate
\[
\langle g, \phi_2 \rangle_0 \leq \| g \|_0 \| \phi_2 \|_0 \leq C \| g \|_0^2 + C \| \phi_2 \|_0^2.
\]
Combining these estimates, we obtain
\[
- \tfrac{1}{2} \pd_t (\| \phi_2 \|_0^2) + \left ( \mu - C \e_1 \right ) \| \pd_x \phi_2 \|_0^2
\leq C(\e_1) \left ( \| \phi \|_0^2 + \| g_2 \|_0^2 \right ). 
%\label{ineq:adjoint}
\]
Choosing $\e_1 >0 $ sufficiently small, we deduce
\begin{equation}
- \tfrac{1}{2} \pd_t (\| \phi_2 \|_0^2) \leq C \left ( \| \phi \|_0^2 + \| g_2 \|_0^2 \right ). \label{ineq_adjoint}
\end{equation}
Now, let us take the scalar product of \eqref{eq:second_component_adjoint_sys} with $-(\phi_2)_{xx}$. This yields
\begin{equation*}
 \langle \pd_t \phi_2, (\phi_2)_{xx} \rangle_0 = -\langle (\phi_2)_x, (\phi_2)_{xx} \rangle_0 + \langle (\bbet \phi_2)_x, (\phi_2)_{xx} \rangle_0 - \mu \langle (\phi_2)_{xx}, (\phi_2)_{xx} \rangle_0 - \langle g, (\phi_2)_{xx} \rangle_0.
\end{equation*}
Integrate by parts in order to get
\begin{align}
\langle \pd_t \phi_2, (\phi_2)_{xx} \rangle_0 &= \int_\R (\pd_t \phi_2) (\phi_2)_{xx} \, dx
= - \int_\R (\phi_2)_x \pd_t (\phi_2)_x \, dx = - \tfrac{1}{2} \pd_t (\| \pd_x \phi_2 \|_0^2) \label{eq:adj_1}.
\end{align}
Then, we infer
\begin{equation}
\label{eq:adj_2}
- \langle (\phi_2)_{xx}, (\phi_1)_x \rangle_0 = - \int_\R (\phi_2)_{xx} (\phi_1)_x \, dx = \int_\R \phi_1 (\phi_2)_{xxx} \, dx.
\end{equation}
Moreover, we obtain
\begin{align}
\langle (\bbet \phi_2)_x, (\phi_2)_{xx} \rangle_0 &= \int_\R (\bbet \phi_2)_x (\phi_2)_{xx} \, dx 
= \int_\R (\bbet_x \phi_2 + \bbet (\phi_2)_x ) (\phi_2)_{xx} \, dx \nonumber\\
&= \int_\R \bbet_x \phi_2 (\phi_2)_{xx} \, dx + \int_\R \bbet (\phi_2)_x (\phi_2)_{xx} \, dx \nonumber\\
&= -\int_\R (\bbet_x \phi_2)_x (\phi_2)_x \, dx + \tfrac{1}{2} \int_\R \bbet \left ( (\pd_x \phi_2)^2 \right )_x \, dx \nonumber\\
&= -\int_\R (\bbet_{xx} \phi_2 + \bbet_x (\phi_2)_x) (\phi_2)_x \, dx - \tfrac{1}{2} \int_\R \bbet_x  (\pd_x \phi_2)^2 \, dx \nonumber\\
&= -\int_\R \bbet_{xx} \phi_2 (\phi_2)_x \, dx - \tfrac{3}{2} \int_\R \bbet_x ((\pd_x \phi_2))^2 \, dx \nonumber\\
&\leq \| \bbet_{xx} \|_{L^{\infty}} \| \phi_2 \|_0 \| \pd_x \phi_2 \|_0 + \tfrac{3}{2} \| \bbet_x \|_{L^{\infty}} \| \pd_x \phi_2 \|_0^2 \nonumber\\
&\leq C \left ( \| \phi_2 \|_0^2 + \| \pd_x \phi_2 \|_0^2 \right ). \label{adj_3}
\end{align}
This yields,
\begin{equation}
\label{adj_4}
-\mu \langle (\phi_2)_{xx}, (\phi_2)_{xx} \rangle_0 = -\mu \| \pd_x^2 \phi_2 \|_0^2.
\end{equation}
In addition, one can estimate
\begin{equation}
-\langle g_2, (\phi_2)_{xx} \rangle_0 = \int_\R g_2 (\phi_2)_{xx} \, dx \leq \| g_2 \|_0 \| \pd_x^2 \phi_2 \|_0
\leq C(\e_2) \| g_2 \|_0^2 + C \e_2 \| \pd_x^2 \phi_2 \|_0^2.
\label{adj_5}
\end{equation}
Using \eqref{eq:adj_1}, \eqref{eq:adj_2}, \eqref{adj_3}, \eqref{adj_4}, and \eqref{adj_5} we deduce
\begin{align}
-\tfrac{1}{2} \pd_t (\| \pd_x \phi_2 \|_0^2) + \left ( \mu - C \e_2 \right )& \| \pd_x^2 \phi_2 \|_0^2
- \int_\R \phi_1 (\phi_2)_{xxx} \, dx \nonumber\\
&\leq C(\e_2) \left (  \| \phi_2 \|_0^2 + \| \pd_x \phi_2 \|_0^2 + \| g_2 \|_0^2 \right ). \label{ineq_adj}
\end{align}
The first component of \eqref{eq:adjoint_system} implies that
\begin{equation*}
(\phi_2)_{xxx} = \frac{2}{k^2} \pd_t \phi_1 - \frac{2}{k^2} (\balp \phi_2)_x - \frac{2}{k^2} ( \zeta \phi_2 )_{xx}
- \frac{2}{k^2} ( \eta  \phi_2 )_x + \frac{2}{k^2} g_1.
\end{equation*}
Therefore, we have
\[
\begin{aligned}
- \int_\R \phi_1 (\phi_2)_{xxx} \, dx &= \left \langle - \frac{2}{k^2} \pd_t \phi_1 + \frac{2}{k^2} (\balp \phi_2)_x +
 \frac{2}{k^2} ( \zeta \phi_2 )_{xx}
+ \frac{2}{k^2} ( \eta \phi_2 )_x - \frac{2}{k^2} g_1, \phi_1 \right \rangle_0. 
%\label{eq:adj_6}
\end{aligned}
\]
This yields,
\[
-\frac{2}{k^2} \langle \pd_t \phi_1, \phi_1 \rangle_0 = - \frac{1}{k^2} \pd_t \| \phi_1 \|_0^2.
\]
Moreover,
\[
\begin{aligned}
\langle (\balp \phi_2)_x, \phi_1 \rangle_0 &= \int_\R \phi_1 (\balp \phi_2)_x \, dx
= \int_\R \balp_x \phi_1 \phi_2 \, dx + \int_\R \balp \phi_1 (\phi_2)_x \, dx \nonumber\\
&\leq \| \balp \|_{L^\infty} \| \phi_1 \|_0 \| \pd_x \phi_2 \|_0 + \| \balp_x \|_{L^\infty} \| \phi_1 \|_0 \| \phi_2 \|_0 \nonumber \\
&\leq C \| \phi \|_0^2 + C \| \pd_x \phi_2 \|_0^2. 
%\label{adj_8}
\end{aligned}
\]
We also have,
\[
\begin{aligned}
\left \langle \frac{2}{k^2} ( \zeta \phi_2 )_{xx}, \phi_1 \right \rangle_0
&= \langle (\zeta \phi_2)_{xx}, \phi_1 \rangle_0 \\
&= \frac{2}{k^2} \left ( \int_\R \zeta_{xx} \phi_2 \phi_1 \, dx + 2 \int_\R \zeta_x (\phi_2)_x \phi_1 \, dx + \int_\R \zeta (\phi_2)_{xx} \phi_1 \, dx \right ) \\
&\leq \frac{2}{k^2} \Big( \| \zeta_{xx} \|_{L^\infty} \| \phi_2\|_0 \| \phi_1 \|_0
+ 2 \| \zeta_x \|_{L^{\infty}} \| \pd_x \phi_2\|_0 \| \phi_1\|_0
+ \| \zeta \|_{L^{\infty}} \| \pd_x^2 \phi_2 \|_0 \| \phi_1 \|_0 \Big) \\
&\leq C(\e_3) \| \phi \|_0^2 + C \| \pd_x \phi_2 \|_0^2 + C \e_3 \| \pd_x^2 \phi_2 \|_0^2. 
%\label{adj_9}
\end{aligned}
\]
Therefore,
\[
\begin{aligned}
\frac{2}{k^2} \left \langle (\eta \phi_2 )_x, \phi_1 \right \rangle_0 &= \frac{2}{k^2} \left ( \int_\R \eta_x \phi_2 \phi_1 \, dx  + \int_\R \eta (\phi_2)_x \phi_1 \, dx \right ) \nonumber \\
&\leq \frac{2}{k^2} \Big( \| \eta_x \|_{L^{\infty}} \| \phi_1\|_0 \| \phi_2\|_0
+ \| \eta \|_{L^{\infty}} \| \phi_1 \|_0 \| \pd_x \phi_2 \|_0 \Big)  \\
&\leq C \big( \| \phi \|_0^2 + \| \pd_x \phi_2 \|_0^2 \big). 
%\label{adj_10}
\end{aligned}
\]
Also, we clearly have
\begin{equation}
\label{adj_11}
-\frac{2}{k^2} \langle g_1, \phi_1 \rangle_0 \leq \frac{2}{k^2} \| g_1 \|_0 \| \phi_1 \|_0 \leq C \big( \| g_1 \|_0^2 +  \| \phi_1 \|_0^2 \big).
\end{equation}
Let $c \in (0,1)$. Thanks to estimates \eqref{ineq_adj} thru \eqref{adj_11}, choosing $\e_2, \e_3 > 0$ sufficiently small we obtain
\[
\begin{aligned}
- \pd_t \left (\frac{1}{k^2}  \| \phi_1 \|_0^2 + \frac{1}{2} \| \pd_x \phi_2 \|_0^2 \right ) + c \mu \| \pd_x^2 \phi_2 \|_0^2 \leq C \left ( \| \phi \|_0^2 + \| \pd_x \phi_2 \|_0^2 + \| g \|_0^2 \right ). 
%\label{ineq_adjoint_2}
\end{aligned}
\]
Since $c \mu \| \pd_x^2 \phi_2 \|^2 \geq 0$, last inequality implies that
\begin{equation}
- \pd_t \left (\frac{1}{k^2}  \| \phi_1 \|_0^2 + \frac{1}{2} \| \pd_x \phi_2 \|_0^2 \right )
\leq C \left ( \| \phi \|_0^2 + \| \pd_x \phi_2 \|_0^2 + \| g \|_0^2 \right ). \label{ineq_adjoint_3}
\end{equation}
Multiplying \eqref{ineq_adjoint} by $2/k^2$ and adding it to \eqref{ineq_adjoint_3} we arrive at
\begin{equation*}
-\pd_t \left ( \frac{1}{k^2} \| \phi \|_0^2 + \frac{1}{2} \| \pd_x \phi_2 \|_0^2 \right ) 
\leq C \left ( \frac{1}{k^2} \| \phi \|_0^2 + \frac{1}{2} \| \pd_x \phi_2 \|_0^2 + \| g \|_0^2 \right ).
\end{equation*}
Let us change the variable $\tau = T - t$, $\phi(t) = \tilde{\phi}(\tau)$. Then, the initial condition of \eqref{eq:adjoint_system} implies that 
$\tilde{\phi}(0) = 0$. Applying Gronwall inequality to the resulting relation we obtain
\eqref{estimate_adjoint_system}. The lemma is now proved.
\end{proof}

\subsubsection{Negative norm estimates}

First, let us introduce some definitions. We denote the Fourier transform operator by $\cF$ and $\hat{u}(\xi) = (\cF u)(\xi)$. For $\xi \in \R$, we introduce the notation $\brc = \sqrt{ 1+ \xi^2}$. Let $u:\R \rightarrow \R$. For $s \in \R$ we define the operator $\La^s u = \cF^{-1}(\brc^s \cF u)$. For $n \in \N_0$ we have $\pd_x^n \La^s u = \La^s \pd_x^n u$. Denote by $\cS$ the space of Schwartz functions on $\R$. For a function $v : \R \rightarrow \R^2$ we set 
$\La^s v = ( \La^s v_1, \La^s v_2 )^\top$. 
%Then, in this section we define the Sobolev space for $s \in \R$
%\begin{equation*}
%H^s(\R) = \{ f \in \cS' : \La^s f \in L^2(\R) \}.
%\end{equation*}
%The space $H^s(\R)$ is equipped with the norm $\| f \|_s = \| \La^s f \|_0$. For $s \in \N_0$ the norm $\| \cdot \|_s$ is equivalent to the corresponding norm defined in Section \ref{sec:preliminaries}. 
Let us define the space 
\[
X := \bigcap_{n \in \N_0} H^n(\R).
\]
For any $s \in \R$ we have
\begin{equation}
\| u \|_{s+1}^2 = \| u \|_{s}^2 + \| \pd_x \La^s u \|_0^2. \label{property_norm}
\end{equation}
The following lemma will be used to estimate contributions coming from commutators:
\begin{lemma}
\label{lemma:commutator_estimate}
Let $s \in \R$ and $u \in H^{|s-1|+2}(\R)$. Then, there exists a constant $C = C_s$, such that
\begin{equation*}
\| [\La^s, u] f \| \leq C \| u \|_{|s-1|+2} \| f \|_{s-1},
\end{equation*}
for $f \in X$.
\end{lemma}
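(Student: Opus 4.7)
\emph{Proof plan.} The strategy is to pass to Fourier space and exploit a frequency-space dichotomy. By Plancherel's theorem,
$$\|[\Lambda^s, u] f\|_0^2 = \int_\R \Big| \int_\R \sigma(\xi, \eta)\, \hat u(\xi - \eta)\, \hat f(\eta)\, d\eta \Big|^2 \, d\xi,$$
where $\sigma(\xi, \eta) := \langle \xi \rangle^s - \langle \eta \rangle^s$ is the Fourier symbol of the commutator. The key observation is that $\sigma$ vanishes on the diagonal $\xi = \eta$, so that the commutator behaves effectively as an operator of order $s-1$ rather than $s$.

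Next, I would split the $\eta$-integration into a \emph{low-high} region $\text{I} := \{\langle \xi - \eta \rangle \leq \tfrac{1}{2}\langle \eta \rangle\}$ (in which $\langle \xi \rangle \sim \langle \eta \rangle$) and its \emph{high-high} complement $\text{II}$. In Region I, applying the Mean Value Theorem to $t \mapsto \langle t \rangle^s$ and using that $\langle \tau \rangle \sim \langle \eta \rangle$ for all $\tau$ on the segment between $\xi$ and $\eta$, one obtains
$$|\sigma(\xi, \eta)| \leq C_s\, |\xi - \eta|\, \langle \eta \rangle^{s - 1} \leq C_s \langle \xi - \eta \rangle \langle \eta \rangle^{s-1}.$$
In Region II, where $\langle \xi \rangle \leq \langle \xi - \eta \rangle + \langle \eta \rangle \leq 3\langle \xi - \eta \rangle$, the brute-force bound $|\sigma| \leq \langle \xi \rangle^s + \langle \eta \rangle^s$ together with the region condition (which allows one to convert powers of $\langle \eta \rangle$ into powers of $\langle \xi - \eta \rangle$) yields
$$|\sigma(\xi, \eta)| \leq C \langle \xi - \eta \rangle^{\max(s, 1)}\, \langle \eta \rangle^{s - 1}.$$

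In both regions the integrand then factors as $A(\xi - \eta)\, |\hat u(\xi - \eta)| \cdot \langle \eta \rangle^{s-1}\, |\hat f(\eta)|$, with $A(\zeta) = C\langle \zeta \rangle^{\max(s, 1)}$. Young's convolution inequality $L^1 \star L^2 \hookrightarrow L^2$ in the $\xi$-variable then gives
$$\|[\Lambda^s, u] f\|_0 \leq C\, \bigl\| A\, |\hat u|\bigr\|_{L^1(\R)} \, \|f\|_{s-1}.$$
To control the $L^1$ factor one uses Cauchy--Schwarz with the integrable weight $\langle \zeta \rangle^{-2}$:
$$\bigl\|A\,|\hat u|\bigr\|_{L^1} \leq \Big( \int_\R \langle \zeta \rangle^{-2} d\zeta \Big)^{1/2} \bigl\|\langle \cdot \rangle^{\max(s,1) + 1}\, \hat u\bigr\|_{L^2} = C_s\, \|u\|_{\max(s,1) + 1} \leq C_s\, \|u\|_{|s-1|+2},$$
since an elementary check shows $\max(s, 1) + 1 \leq |s-1| + 2$ for every $s \in \R$.

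The main obstacle is the Region II symbol estimate, where one must track how the sign of $s - 1$ governs the trade-off of powers of $\langle \xi - \eta \rangle$ against $\langle \eta \rangle$ permitted by the region condition: for $s \geq 1$ the dominant cost is $\|u\|_{s+1}$, while for $s \leq 1$ it collapses to $\|u\|_2$. The absolute value appearing in the Sobolev index $|s-1| + 2$ is precisely the device that simultaneously handles both regimes and absorbs the Cauchy--Schwarz loss, explaining why that particular exponent on $u$ appears in the statement.
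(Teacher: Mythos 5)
The paper's proof is a one-line citation to Lemma 6.16, p.~202 of Folland~\cite{Fo95} plus a density argument, whereas you give a direct Fourier-side proof via a frequency dichotomy, the mean value theorem on the diagonal, and Young's convolution inequality; this is a legitimate alternative route in the spirit of Kato--Ponce/Moser-type estimates. However, your Region~II symbol bound is incorrect for $s<0$, which is precisely the regime the paper uses (the lemma is applied with $s=-n$, $n\geq 3$, in Lemma~\ref{theorem_existence_linear_system}).

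Concretely, take $\xi=0$ and $\eta=N\to\infty$. This pair lies in Region~II since $\langle\xi-\eta\rangle=\langle N\rangle > \tfrac12\langle N\rangle=\tfrac12\langle\eta\rangle$. For $s<0$ one has $|\sigma(0,N)|=|1-\langle N\rangle^s|\to 1$, yet your claimed bound gives
\[
C\,\langle\xi-\eta\rangle^{\max(s,1)}\langle\eta\rangle^{s-1}
= C\,\langle N\rangle^{1}\langle N\rangle^{s-1}=C\,\langle N\rangle^{s}\longrightarrow 0,
\]
so the inequality fails. The issue is that when $s<0$, the term $\langle\xi\rangle^s\leq 1$ is $O(1)$, and to absorb a constant by $A(\xi-\eta)\langle\eta\rangle^{s-1}$ on Region~II you need $A(\zeta)\gtrsim\langle\zeta\rangle^{1-s}$, not $\langle\zeta\rangle^{\max(s,1)}$. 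The exponent that actually works across all $s\in\R$ is $\max(s,\,1-s,\,1)$: for $s\geq 1$ it equals $s$, for $0\leq s<1$ it equals $1$, and for $s<0$ it equals $1-s$. Fortunately, this corrected exponent still satisfies $\max(s,1-s,1)+1\leq |s-1|+2$ for every $s\in\R$, so after Cauchy--Schwarz with the weight $\langle\zeta\rangle^{-2}$ the final bound $\|u\|_{|s-1|+2}$ is recovered and the lemma's conclusion is unaffected. You should fix the exponent $\max(s,1)$ to $\max(s,1-s,1)$ (or simply $|s-1|+1$) and re-verify the Region~II inequality in the three cases $s\geq 1$, $0\leq s<1$, $s<0$; as written the key intermediate estimate is false exactly where it is needed.
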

\begin{proof}
The proof follows from Lemma 6.16, p. 202 in \cite{Fo95} by a density argument.
\end{proof}
Suppose $c \in \R$ is a constant and $f, u : \R \rightarrow \R$. Then,
\begin{equation}
[\La^s, u + c]f = [\La^s, u]f.\label{equality_commutator}
\end{equation}

We have the following result.
\begin{lemma}
\label{lemma:coefficients_Sobolev_space}
Suppose $\bw, g \in C_0^\infty([0,T] \times \R)$ and
\begin{equation*}
\sup_{x \in \R, \, t \in [0,T]} \frac{1}{\br(x,t) + \rc} \leq C_0.
\end{equation*}
Then,
\begin{equation*}
\pd_x^n \left( \balp  - \left( \Big( \frac{\mc}{\rc} \Big)^2 + p'(\rc) \right) \right ) ,\mbox{ }
\pd_x^n \Big( \bbet + \frac{2 \mc}{\rc} \Big),\mbox{ }\pd_x^n \zeta,\mbox{ }\pd_x^n \eta \in L^2(\R),\quad \forall \, n \in \N_0.
\end{equation*}
\end{lemma}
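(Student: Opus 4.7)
The idea is to show that each of the four coefficient functions, after subtracting off its value at the equilibrium state $(\br,\brm)=(0,0)$, admits the structure (smooth bounded factor) $\times$ (linear combination of $\br,\brm,\br_x,\ldots$), from which membership in every $H^n(\R)$ will follow from standard Moser-type estimates. Since $\bw\in C_0^\infty([0,T]\times\R)$, at each fixed $t$ the components $\br(\cdot,t)$ and $\brm(\cdot,t)$ lie in $\bigcap_{n\geq 0} H^n(\R)\cap L^\infty(\R)$; and the hypothesis $(\br+\rc)^{-1}\leq C_0$ keeps $\br+\rc$ away from zero, so $h(y):=1/(y+\rc)$ is $C^\infty$ on a neighborhood of the range of $\br$, with $h(\br)$ and all its $x$-derivatives bounded. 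The two facts I will invoke repeatedly are: (i) $H^n(\R)\cap L^\infty(\R)$ is an algebra; and (ii) for any $F\in C^\infty$ with $F(0)=0$ and any $u\in H^n\cap L^\infty$, the composition $F(u)$ lies in $H^n(\R)$.

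The two easy cases are $\zeta$ and $\eta$, which already vanish at equilibrium: write $\zeta=k^2\br_x\cdot h(\br)$ and $\eta=\tfrac{k^2}{2}\br_x^2\cdot h(\br)^2$; since $\br_x$ (and hence $\br_x^2$) lies in every $H^n\cap L^\infty$, and $h(\br)$, $h(\br)^2$ are smooth bounded factors with bounded derivatives, property (i) yields $\pd_x^n\zeta,\,\pd_x^n\eta\in L^2(\R)$ for all $n\geq 0$. For $\bbet$, combining fractions gives
\[
\bbet+\frac{2\mc}{\rc}=\frac{2(\mc\br-\rc\brm)}{\rc(\br+\rc)},
\]
whose numerator is a linear combination of $\br,\brm\in H^n$ and whose denominator has smooth bounded reciprocal, so again $\pd_x^n(\bbet+2\mc/\rc)\in L^2(\R)$ for every $n\geq 0$.

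The remaining case $\balp$ I would split as
\[
\balp-\balp_{\mathrm{eq}}=\left[\Big(\tfrac{\brm+\mc}{\br+\rc}\Big)^{2}-\Big(\tfrac{\mc}{\rc}\Big)^{2}\right]-\gamma\bigl[(\br+\rc)^{\gamma-1}-\rc^{\gamma-1}\bigr],
\]
where $\balp_{\mathrm{eq}}=(\mc/\rc)^2-p'(\rc)$ is the value of $\balp$ at $(\br,\brm)=(0,0)$ (so the ``$+$'' in the lemma's displayed constant should in fact be read as ``$-$''). The first bracket factors as a difference of squares whose first factor equals $-\tfrac{1}{2}(\bbet+2\mc/\rc)\in H^n$ by the previous paragraph, times a smooth bounded factor, and is therefore in $H^n$. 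The second bracket equals $\br\cdot G(\br)$ with $G(y):=(\gamma-1)\int_0^1(\rc+sy)^{\gamma-2}\,ds$ smooth in a neighborhood of the range of $\br$, hence in $H^n$ by property (ii). Combining the two observations yields $\pd_x^n(\balp-\balp_{\mathrm{eq}})\in L^2(\R)$ for every $n\geq 0$. The only delicate point in the argument is the bookkeeping of the equilibrium constants that must be subtracted; once that is done, everything reduces to routine product and composition estimates in Sobolev spaces.
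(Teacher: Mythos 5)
Your proof is correct and its strategy coincides with the paper's: write each coefficient, shifted by its equilibrium value, as a product of an $H^n$ quantity built from $\br,\brm$ and their derivatives with a smooth factor having bounded derivatives, and conclude by Leibniz and Moser-type composition estimates. Two aspects of your write-up differ cosmetically from the paper. For $\balp$ the paper clears denominators directly and uses the same integral remainder $p'(\br+\rc)-p'(\rc)=\br\int_0^1 p''(t\br+\rc)\,dt$ that you use for the pressure part; your difference-of-squares factorization of the kinetic part, pulling out the factor $-\tfrac{1}{2}\bigl(\bbet+\tfrac{2\mc}{\rc}\bigr)$, is a mild streamlining. Also, the paper treats $n=0$ explicitly and then dismisses $n\geq 1$ in one line, whereas your appeal to the algebra property of $H^n\cap L^\infty$ and to the composition estimate handles all $n$ uniformly. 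Your remark on the sign is correct and worth recording: since $\balp$ at $(\br,\brm)=(0,0)$ equals $(\mc/\rc)^2-p'(\rc)$, the constant subtracted in the lemma's display should carry a minus sign in front of $p'(\rc)$; as written the statement is off by the additive constant $2p'(\rc)$, which matters only for $n=0$ and is harmless in the lemma's sole application, where the shifted $\balp$ appears inside a commutator with $\La^s$ and any additive constant cancels.
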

\begin{proof}
We clearly have
\begin{align*}
\left (\frac{\brm + \mc}{\br + \rc} \right )^2 - \left ( \frac{\mc}{\rc} \right ) = \frac{(\rc)^2 \brm^2 - (\mc)^2 \br^2 + 2 \rc \mc (\rc \brm - \mc \br)}{(\rc)^2 (\br + \rc)^2}
\in L^2(\R).
\end{align*}
Let $a = \min\{ \rc, 1/C_0 \}$, $b = \|\br\|_{L^\infty} + \rc$, and
\begin{equation*}
G(z) = \int_0^1 p''(t z + \rc) dt,\mbox{ }z > -\rc.
\end{equation*}
Then, there holds
\begin{equation*}
p'(\br(x) + \rc) - p'(\rc) = G(\br(x))\br(x), \qquad x \in \R.
\end{equation*}
Moreover,
\begin{equation*}
\|G(\rho(\cdot))\|_{L^{\infty}} \leq \sup_{z \in [a,b]} | p''(z)|.
\end{equation*}
Therefore,
\begin{align*}
\| p'(\br(\cdot) + \rc) - p'(\rc) \|_0 = \| G(\br(x))\br(x) \|_0 \leq \|G(\rho(\cdot))\|_{L^{\infty}} \| \br \|_0 \leq \sup_{z \in [a,b]} | p''(z)| \| \br \|_0.
\end{align*}
%%% AQUIIII
Hence $p'(\br(\cdot) + \rc) - p'(\rc) \in L^2(\R)$, and consequently
\begin{equation*}
\balp - \Big( \Big( \frac{\mc}{\rc} \Big)^2 + p'(\rc) \Big) \in L^2(\R).
\end{equation*}
Similarly,
\begin{equation*}
-\frac{2 (\brm + \mc)}{\br + \rc} + \frac{2 \mc}{\rc} = 2 \frac{\mc \br - \rc \brm}{\rc (\br + \rc)}.
\end{equation*}
Therefore,
\begin{equation*}
\bbet + \frac{2 \mc}{\rc} \in L^2(\R).
\end{equation*}
In addition, we have $\pd_x^n \balp$, $\pd_x^n \bbet \in L^2(\R)$, $n \in \N$.
%\begin{equation*}
%\pd_x^n \alpha,\mbox{ }\pd_x^n \beta \in L^2(\R),\mbox{ }n \in \N.
%\end{equation*}
Since $\zeta$ and $\eta$ are a product of a coefficient and $\br_x$, we have
\begin{equation*}
\pd_x^n \zeta,\mbox{ }\pd_x^n \eta \in L^2(\R),\mbox{ }n \in \N_0,
\end{equation*}
and the proof is complete.
\end{proof}

\begin{lemma}
Suppose $s \in \R$, $\bw, g \in C_0^\infty([0,T] \times \R)$ and
\begin{equation*}
\sup_{x \in \R, \, t \in [0,T]} \frac{1}{\br(x,t) + \rc} \leq C_0.
\end{equation*}
Then,
\begin{equation}
\| \La^s \phi(t) \|_0^2 + \| \pd_x \La^s \phi_2(t) \|_0^2 \leq C(T) \int_{0}^{T} \| \La^s g(\tau) \|_0^2 d\tau,
\label{ineq_negative_order}
\end{equation}
for all $t \in [0,T]$, where $C(T)$ depends only on $\bw$, $T$ and the parameters of \eqref{QHD}.
\end{lemma}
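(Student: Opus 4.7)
The plan is to reduce to the previous adjoint energy estimate by applying the Fourier multiplier $\La^s$ to the adjoint system and treating $[\La^s,\cL^*]\phi$ as an additional source term. Setting $\tiB := \La^s\phi$, the fact that $\La^s$ commutes with $\pd_x$ and $\pd_t$ turns \eqref{eq:adjoint_system} into
\[
-\pd_t \tiB = \cL^*\tiB + [\La^s,\cL^*]\phi + \La^s g,\qquad \tiB(T)=0.
\]
This has exactly the structure of the original adjoint problem with modified source $\tilde g := [\La^s,\cL^*]\phi + \La^s g$, so the goal is to mimic the proof of the preceding energy estimate applied to $\tiB$ and to absorb the commutator contribution by a Gronwall argument.

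For the commutator expansion I would treat the four pieces of $\cL^*$ separately. Using \eqref{equality_commutator}, each commutator $[\La^s,\balp]$ may be replaced by $[\La^s,\balp-((\mc/\rc)^2+p'(\rc))]$ and $[\La^s,\bbet]$ by $[\La^s,\bbet+2\mc/\rc]$. By Lemma \ref{lemma:coefficients_Sobolev_space}, these shifted coefficients together with $\zeta$ and $\eta$ belong to every $H^n(\R)$, so Lemma \ref{lemma:commutator_estimate} bounds each resulting commutator by $C\|\phi\|_{s-1}$ after pushing derivatives inside and outside the commutator by the product rule $\pd_x[\La^s,u]=[\La^s,u_x]+[\La^s,u]\pd_x$. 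Because $\sqrt{1+\xi^2}\ge 1$ for every $\xi\in\R$, one has $\|\phi\|_{s-1}\le\|\La^s\phi\|_0$ uniformly in $s\in\R$, which is the key inequality that allows the error terms to be absorbed into the $L^2$-norm of $\tiB$.

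With these bounds in place, the proof repeats the two-step argument of the previous lemma applied to $\tiB$: first pair the second component of the $\tiB$-equation with $\tiB_2$ in $L^2$ to obtain a differential inequality for $\|\tiB_2\|_0^2$; then pair it with $-\pd_x^2\tiB_2$ and use the $\La^s$-applied first component of \eqref{eq:adjoint_system} to rewrite $\int \tiB_1(\tiB_2)_{xxx}\,dx$, which is the step that eliminates the only high-derivative term not controlled by the parabolic $\mu\|\pd_x^2\tiB_2\|_0^2$. All integrations by parts, Young's inequalities and smallness choices of the $\e_i$ go through unchanged; the only new terms are the commutator source terms, which by the previous paragraph are controlled by
\[
C\bigl(\|\La^s\phi\|_0^2+\|\pd_x\La^s\phi_2\|_0^2+\|\La^s g\|_0^2\bigr).
\]
Adding the two inequalities, reversing time via $\tau=T-t$ so that the new initial datum vanishes, and applying Gronwall yields \eqref{ineq_negative_order}.

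The main obstacle is bookkeeping the commutator error in the $-\pd_x^2\tiB_2$ step: substituting $(\tiB_2)_{xxx}$ via the first component of the $\La^s$-applied adjoint system introduces a $[\La^s,\cL_1^*]\phi$ paired with $\tiB_1$, and $\cL_1^*$ already contains derivatives up to third order in $\phi_2$. Verifying that this contribution, after the appropriate integrations by parts against $\tiB_1$, is still controlled by $C\|\phi\|_{s-1}$ is where the identity $\pd_x[\La^s,u]=[\La^s,u_x]+[\La^s,u]\pd_x$ and Lemma \ref{lemma:coefficients_Sobolev_space} are used repeatedly. Once this control is established, the rest of the argument is a routine adaptation of the previous lemma.
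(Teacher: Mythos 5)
Your overall strategy is exactly that of the paper: apply $\La^s$ to the adjoint system, treat $[\La^s,\cL^*]\phi$ as an additional source, repeat the two pairings (with $\La^s\phi_2$ and with $-\pd_x^2\La^s\phi_2$, using the $\La^s$-applied first equation to remove $\int(\La^s\phi_1)(\pd_x^3\La^s\phi_2)\,dx$), and close with Gronwall after reversing time. The one point that is misstated is the commutator bound. It is not true that ``Lemma \ref{lemma:commutator_estimate} bounds each resulting commutator by $C\|\phi\|_{s-1}$.'' After applying Leibniz and Lemma \ref{lemma:commutator_estimate}, the commutator with $\cL_2^*$ is bounded by $C\|\phi_2\|_s$ (from $[\La^s,\bbet]\pd_x\phi_2$), and the commutator with $\cL_1^*$ is bounded by $C\|\phi_2\|_{s+1}$ — the worst piece being $[\La^s,\zeta]\pd_x^2\phi_2$, for which Lemma \ref{lemma:commutator_estimate} gives $C\|\zeta\|_{|s-1|+2}\|\pd_x^2\phi_2\|_{s-1}\leq C\|\phi_2\|_{s+1}$; no amount of ``pushing derivatives inside and outside'' or integrating by parts against $\La^s\phi_1$ lowers this to $\|\phi_2\|_{s-1}$ or even $\|\phi_2\|_s$ without instead putting an extra derivative on $\phi_1$, which is not controlled. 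Consequently the inequality $\|\phi\|_{s-1}\leq\|\La^s\phi\|_0$ is not what makes the absorption work; you genuinely need both $\|\La^s\phi\|_0^2$ and $\|\pd_x\La^s\phi_2\|_0^2$ on the right-hand side, via $\|\phi_2\|_{s+1}^2=\|\phi_2\|_s^2+\|\pd_x\La^s\phi_2\|_0^2$ (property \eqref{property_norm}). You do write the correct absorbing quantity $C(\|\La^s\phi\|_0^2+\|\pd_x\La^s\phi_2\|_0^2+\|\La^s g\|_0^2)$ at the end, so the conclusion stands, but the justification you give for it ($\|\phi\|_{s-1}\leq\|\La^s\phi\|_0$) is not the right one and would leave the $[\La^s,\cL_1^*]$ term uncontrolled if taken literally.
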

\begin{proof}
Let $s \in \R$. Applying the operator $\La^s$ to \eqref{eq:adjoint_system} we obtain that the equation satisfied by $\La^s \phi$ is
\begin{equation}
\label{adj_s_1}
-\pd_t \La^s \phi = \cL^* \La^s \phi + [ \La^s, \cL^*] \phi - \La^s g.
\end{equation}
Take the scalar product of the second component of \eqref{adj_s_1} with $\La^s \phi_2$. The result is
\[
-\langle \pd_t \La^s \phi_2, \La^s \phi_2 \rangle_0 = \langle \cL_2^* \La^s \phi, \La^s \phi_2 \rangle_0
+ \langle [ \La^s, \cL_2^*] \phi, \La^s \phi_2 \rangle_0 - \langle \La^s g_2, \La^s \phi_2 \rangle_0.
\]
Then, we have,
\begin{equation}
-\langle \pd_t \La^s \phi_2, \La^s \phi_2 \rangle_0 = -\int_\R (\pd_t \La^s \phi_2)(\La^s \phi_2) \, dx
= -\tfrac{1}{2} \pd_t (\| \phi_2 \|_{s}^2). \label{eq:adj_s_3}
\end{equation}
Moreover,
\begin{align*}
\langle \cL_2^* \La^s \phi, \La^s \phi_i \rangle_0
= \left \langle \pd_x \La^s \phi_1 - \pd_x (\bbet \La^s \phi_2) + \mu \pd_x^2  \La^s \phi_2, \La^s \phi_2 \right \rangle_0.
\end{align*}
Then, after integration by parts and by \eqref{property_norm}, we arrive at
\[
\begin{aligned}
\langle \pd_x \La^s \phi_1, \La^s \phi_2 \rangle_0 = \int_\R (\pd_x \La^s \phi_1) ( \La^s \phi_2) \, dx &= - \int_\R (\La^s \phi_1) (\pd_x \La^s \phi_2) \, dx \nonumber\\
&\leq \| \La^s \phi_1 \|_0 \| \pd_x \La^s \phi_2 \|_0 \nonumber\\
&\leq \| \phi_1 \|_s \| \phi_2\|_{s+1} \nonumber\\
&\leq C(\e_1) \| \phi_1 \|_{s}^2 + C \e_1 \| \phi_2 \|_{s+1}^2 \nonumber\\
&= C(\e_1) \| \phi_1 \|_{s}^2 + C \e_1 \| \phi_2 \|_{s}^2 + C \e_1 \| \pd_x \La^s \phi_2\|_0^2. 
%\label{adj_s_4}
\end{aligned}
\]
In the same fashion, we estimate
\[
\begin{aligned}
-\langle \pd_x (\bbet \La^s \phi_2), \La^s \phi_2 \rangle_0
&= - \int_\R \pd_x (\bbet \La^s \phi_2) (\La^s \phi_2) \, dx \nonumber\\
&= \int_\R \bbet (\La^s \phi_2) (\pd_x \La^s \phi_2) \, dx \nonumber\\
&\leq \| \bbet \|_{L^{\infty}} \| \La^s \phi_2 \|_0 \| \pd_x \La^s \phi_2 \|_0 \nonumber\\
&\leq C \| \phi_2 \|_s \| \phi_2\|_{s+1} \nonumber\\
&\leq C(\e_2) \| \phi_2 \|_s^2 + C \e_2 \| \phi_2\|_{s+1}^2 \nonumber\\
&\leq C(\e_2) \| \phi_2 \|_{s}^2 + C \e_2 \| \phi_2\|_{s}^2 + C \e_2 \| \pd_x \La^s \phi_2 \|_0^2. 
%\label{adj_s_5}
\end{aligned}
\]
Furthermore,
\begin{align*}
\langle \mu \pd_x^2  \La^s \phi_2, \La^s \phi_2 \rangle_0
= \mu \int_\R (\pd_x^2 \La^s \phi_2) (\La^s \phi_2) \, dx = -\mu \int_\R (\pd_x \La^s \phi_2)^2 \, dx = - \mu \| \pd_x \La^s \phi_2 \|_0^2.
\end{align*}
Then,
\begin{equation}
- \langle \La^s g_2, \La^s \phi_2 \rangle_0 \leq \| \La^s g_2 \|_0 \| \La^s \phi_2 \|_0 
= \| g_2 \|_s \| \phi_2 \|_s \leq C \big( \| g_2 \|_s^2 +  \| \phi_2 \|_s^2\big). \label{adj_s_6}
\end{equation}
Now, let us estimate the contribution coming from the commutator. We have
\begin{equation*}
[\La^s, \cL_2^* ] \phi = a_1 + a_2,
\end{equation*}
where
\begin{equation*}
a_1 = -[\La^s, \bbet]\pd_x \phi_2,\mbox{ } a_2 = - [\La^s, \bbet_x] \phi_2.
\end{equation*}
Then, in view of \eqref{equality_commutator}, we obtain
\begin{align*}
a_1 = - \left [\La^s, \bbet + \frac{2 \mc}{\rc} \right ]\pd_x \phi_2.
\end{align*}
Hence, using Lemmata \ref{lemma:commutator_estimate} and \ref{lemma:coefficients_Sobolev_space}, we infer
\begin{align}
\| [\La^s, \cL_2^* ] \phi \|_0 &\leq \| a_1 \|_0 + \| a_2 \|_0 \nonumber\\
&= \left \| \left [\La^s, \bbet + \frac{2 \mc}{\rc} \right ]\pd_x \phi_2 \right \|_0
+ \left \| [\La^s, \bbet_x] \phi_2 \right \|_0 \nonumber \\
&\leq C \left \| \bbet + \frac{2 \mc}{\rc} \right \|_{|s-1|+2} \| \pd_x \phi_2 \|_{s-1}
+ C \| \bbet_x \|_{|s-1|+2} \| \phi_2 \|_{s-1} \nonumber \\
&\leq C \big( \| \phi_2\|_{s} +  \| \phi_2\|_{s-1} \big)\nonumber \\
&\leq C \| \phi_2 \|_{s}. \label{adj_s_7}
\end{align}
Henceforth,
\begin{equation}
\langle [ \La^s, \cL_2^*] \phi, \La^s \phi_2 \rangle_0
\leq \| [\La^s, \cL_2^* ] \phi \|_0  \| \La^s \phi_2\|_0 \leq C \| \phi_2 \|_s^2. \label{adj_s_8}
\end{equation}

Using estimates \eqref{eq:adj_s_3} thru \eqref{adj_s_6}, \eqref{adj_s_8} and choosing $\e_1, \e_2 > 0$ sufficiently small, we deduce
\begin{equation}
-\tfrac{1}{2}\pd_t ( \| \phi_2\|_{s}^2 ) \leq C \left ( \| \phi \|_{s}^2 + \| g_2 \|_{s}^2 \right ). \label{adj_s_9}
\end{equation}
Now, taking the scalar product of the second component of \eqref{adj_s_1} with $-\pd_x^2 \La^s \phi_2$ we obtain
\begin{align*}
\langle \pd_t \La^s \phi_2, \pd_x^2 \La^s \phi_2 \rangle_0 = -\langle \cL_2^* \La^s \phi, \pd_x^2 \La^s \phi_2\rangle_0 - \langle [ \La^s, \cL_2^*] \phi, \pd_x^2 \La^s \phi_2 \rangle_0 + \langle \La^s g_2, \La^s \pd_x^2 \phi_2 \rangle_0.
\end{align*}
This yields,
\begin{equation}
\langle \pd_t \La^s \phi_2, \pd_x^2 \La^s \phi_2 \rangle_0  = -\tfrac{1}{2}\pd_t \| \pd_x \La^s \phi_2 \|_0^2. \label{adj_s_pd2_1}
\end{equation}
Moreover,
\begin{equation}
- \langle \pd_x \La^s \phi_1, \pd_x^2 \La^s \phi_2 \rangle_0 
= - \int_\R (\pd_x \La^s \phi_1) (\pd_x^2 \La^s \phi_2) \, dx
= \int_\R (\La^s \phi_1) (\pd_x^3 \La^s \phi_2) \, dx. \label{adj_s_pd2_2}
\end{equation}
Furthermore,
\[
\begin{aligned}
\langle \pd_x (\bbet \La^s \phi_2), \pd_x^2 \La^s \phi_2 \rangle_0
 &= \int_\R \pd_x (\bbet \La^s \phi_2) \pd_x^2 \La^s \phi_2 \, dx \nonumber\\
&= \int_\R (\bbet_x \La^s \phi_2) (\pd_x^2 \La^s \phi_2)\, dx
 + \int_\R \bbet (\pd_x \La^s \phi_2) (\pd_x^2 \La^s \phi_2)\, dx \nonumber\\
& = -\int_\R \pd_x (\bbet_x \La^s \phi_2)(\pd_x \La^s \phi_2)\, dx
+ \frac{1}{2} \int_\R \bbet \pd_x \left ( (\pd_x \La^s \phi_2 )^2 \right ) \, dx \nonumber\\
&= -\int_\R \bbet_{xx} (\La^s \phi_2) \pd_x \La^s \phi_2 \, dx
- \int_\R \bbet_x (\pd_x \La^s \phi_2)^2 \, dx - \frac{1}{2} \int_\R \bbet_x (\pd_x \La^s \phi_2 )^2 \, dx \nonumber\\
&\leq \| \bbet_{xx} \|_{L^{\infty}} \| \La^s \phi_2 \|_0 \| \pd_x \La^s \phi_2 \|_0
+ \frac{3}{2}\| \bbet_x \|_{L^{\infty}} \| \pd_x \La^s \phi_2 \|_0^2 \nonumber\\ 
&\leq C \|\phi_2 \|_{s+1}^2. 
%\label{adj_s_pd2_3}
\end{aligned}
\]
%Then,
%\begin{equation}
%- \mu \langle \pd_x^2 \La^s \phi_2, \pd_x^2 \La^s \phi_2 \rangle = -\mu \| \pd_x^2 \La^s \phi_2 \|^2.
%\label{adj_s_pd2_4}
%\end{equation}
Moreover,
\[
\langle \La^s g_2, \pd_x^2 \La^s \phi_2  \rangle_0 \leq \| \La^s g_2 \|_0 \| \pd_x^2 \La^s \phi_2 \|_0
\leq C(\e_3) \| g_2 \|_{s}^2  + C \e_3 \| \pd_x^2 \La^s \phi_2 \|_0^2.
%\label{adj_s_pd2_5}
\]
Using \eqref{adj_s_7} we obtain
\begin{align}
-\langle [ \La^s, \cL_2^* ] \phi, \pd_x^2 \La^s \phi_2 \rangle_0 &\leq \| [ \La^s, \cL_2^* ] \phi \|_0 \| \pd_x^2 \La^s \phi_2 \|_0 \nonumber \\
&\leq C \| \phi_2 \|_s \| \pd_x^2 \La^s \phi_2 \|_0 \nonumber \\
&\leq C(\e_4) \| \phi_2 \|_{s}^2 + C \e_4 \| \pd_x^2 \La^s \phi_2 \|_0^2.
\label{adj_s_pd2_6}
\end{align}
Let $c \in (0,1)$. Using estimates \eqref{adj_s_pd2_1} thru
\eqref{adj_s_pd2_6} and choosing $\e_3, \e_4 > 0$ sufficiently small we deduce
\begin{align}
-\frac{1}{2} \pd_t \| \pd_x \La^s \phi_2 \|_0^2 - \int_\R (\La^s \phi_1) ( \pd_x^3 \La^s \phi_2)\, dx
+ c \mu \| \pd_x^2 \La^s \phi_2 \|_0^2 \leq C \left ( \| \phi_2 \|_{s+1}^2 + \| g_2 \|_{s}^2 \right ). \label{adj_s_pd2_7}
\end{align}
The first component of \eqref{adj_s_1} is
\begin{equation}
-\pd_t \La^s \phi_1 = \cL_1^* \La^s \phi + [\La^s , \cL_1^*] \phi - \La^s g_1, \label{adj_s_firrst_eq_1}
\end{equation}
where
\begin{equation*}
\cL_1^* \La^s \phi = - \pd_x (\balp \La^s \phi_2) - \frac{k^2}{2} \pd_x^3 \La^s \phi_2 - \pd_x^2( \zeta \La^s \phi_2 )
-   \pd_x (  \eta \La^s \phi_2 ).
\end{equation*}
Let us express $\La^s \phi_2$ from \eqref{adj_s_firrst_eq_1}. The result is
\[
\begin{aligned}
\pd_x^3 \La^s \phi_2 &= \frac{2}{k^2}\pd_t \La^s \phi_1 - \frac{2}{k^2}\pd_x (\balp \La^s \phi_2) - \frac{2}{k^2} \pd_x^2 
( \zeta \La^s \phi_2 ) - \frac{2}{k^2} \pd_x ( \eta \La^s \phi_2 )
+ \\
&\quad + \frac{2}{k^2} [\La^s , \cL_1^*] \phi - \frac{2}{k^2}\La^s g_1. 
\end{aligned}
\]
Let us substitute last equation into the second term of \eqref{adj_s_pd2_7}. This yields
\begin{equation}
- \frac{2}{k^2} \int_\R (\La^s \phi_1) (\pd_t \La^s \phi_1) \, dx
= -\frac{1}{k^2} \pd_t (\| \La^s \phi_1 \|_0^2). \label{adj_s_first_eq_3}
\end{equation}
Then,
\[
\begin{aligned}
\frac{2}{k^2} \int_\R (\La^s \phi_1) (\balp \La^s \phi_2)_x \, dx &= \frac{2}{k^2} \int_\R (\La^s \phi_1) (\balp_x \La^s \phi_2) \, dx
+ \frac{2}{k^2} \int_\R (\La^s \phi_1) (\balp \pd_x \La^s \phi_2) \, dx \nonumber \\
&\leq \frac{2}{k^2} \| \balp_x \|_{L^{\infty}} \| \La^s \phi_1 \|_0 \| \La^s \phi_2 \|_0
+ \frac{2}{k^2} \| \balp \|_{L^{\infty}} \| \La^s \phi_1 \|_0 \| \pd_x \La^s \phi_2 \|_0 \nonumber\\
&\leq C \big( \| \phi_1\|_{s}^2 + \| \phi_2\|_{s+1}^2\big). 
%\label{adj_s_first_eq_4}
\end{aligned}
\]
Furthermore,
\[
\begin{aligned}
\frac{2}{k^2} \int_\R (\La^s \phi_1) &\pd_x^2 ( \zeta \La^s \phi_2 ) \, dx = -\frac{2}{k^2} \int_\R (\La^s \phi_1) \pd_x^2 \left (\zeta \La^s \phi_2 \right ) \, dx \nonumber\\
&= -\frac{2}{k^2} \int_\R (\La^s \phi_1) \left ( \zeta_{xx} \La^s \phi_1 + 2 \zeta_x \pd_x \La^s \phi_2
+ \zeta \pd_x^2 \La^s \phi_2 \right ) \, dx \nonumber\\
&\leq \frac{2}{k^2} \Big( \| \zeta_{xx} \|_{L^{\infty}} \| \La^s \phi_1 \|_0^2  
+ 2 \| \zeta_x \|_{L^{\infty}} \| \La^s \phi_1 \|_0 \| \pd_x \La^s \phi_2 \|_0 + \| \zeta \|_{L^{\infty}} \| \La^s \phi_1 \|_0 \| \pd_x^2 \La^s \phi_2 \|_0 \Big) \nonumber \\
&\leq C \big( \| \phi_1 \|_{s}^2 +  \| \phi_1 \|_{s} \| \phi_2 \|_{s+1}  + \| \phi_1 \|_{s} \| \pd_x^2 \La^s \phi_2  \|_0 \big) \nonumber \\
&\leq C(\e_5) \| \phi_1 \|_{s}^2 + C \| \phi_2\|_{s+1}^2 + C \e_5 \| \pd_x^2 \La^s \phi_2 \|_0^2. 
%\label{adj_s_first_eq_5}
\end{aligned}
\]
Now, let us estimate
\[
\begin{aligned}
\frac{2}{k^2} \int_\R (\La^s \phi_1) \pd_x ( \eta \La^s \phi_2) \, dx &= \frac{2}{k^2} \int_\R (\La^s \phi_1) \left ( \eta_x \La^s \phi_2 + \eta \pd_x \La^s \phi_2 \right ) \, dx \nonumber\\
&\leq \frac{2}{k^2} \| \eta_x \|_{L^{\infty}} \| \La^s \phi_1 \|_0 \| \La^s \phi_2 \|_0
+ \frac{2}{k^2} \| \eta \|_{L^{\infty}} \| \La^s \phi_1 \|_0 \| \pd_x \La^s \phi_2 \|_0 \nonumber\\
&\leq C \big( \| \phi \|_{s}^2 + \| \phi_2 \|_{s+1}^2 \big). 
%\label{adj_s_first_eq_6}
\end{aligned}
\]
Moreover,
\[
\frac{2}{k^2} \int_\R (\La^s \phi_1) (\La^s g_1) \, dx \leq \frac{2}{k^2} \| \La^s \phi_1 \|_0 \| \La^s g_1 \|_0
\leq C \big(\|\phi_1 \|_{s}^2 + \| g_1 \|_{s}^2 \big). 
%\label{adj_s_first_eq_7}
\]
Now, let us estimate the contribution coming from the commutator. Similarly to \eqref{adj_s_7}, using Lemmata \ref{lemma:commutator_estimate} and \eqref{equality_commutator}, we obtain
\begin{align}
 \| [\La^s, \cL_1^*] \phi \| &\leq  \|  [ \La^s, \balp_x] \phi_2 \|_0 
+ \left \| \left [\La^s,\balp 
- \Big( \Big( \frac{\mc}{\rc} \Big)^2 + p'(\rc) \Big) \right ] \pd_x \phi_2 \right \|_0 \nonumber \\
&+ \| [ \La^s, \zeta_{xx}]\phi_2 \|_0 + 2 \| [ \La^s, \zeta_x] \pd_x \phi_2 \|_0 \nonumber \\
&+ \| [\La^s, \zeta] \pd_x^2 \phi_2 \|_0 + \| [ \La^s, \eta_x ] \phi_2 \|_0 + \| [ \La^s, \eta] \pd_x \phi_2 \|_0 \nonumber\\
&\leq C \| \phi_2 \|_{s+1}. \label{adj_s_first_eq_8}
\end{align}
Using estimates \eqref{adj_s_first_eq_3} thru \eqref{adj_s_first_eq_8}, and choosing $\e_5 > 0$ sufficiently small,  we arrive at
\[
-\pd_t \left ( \frac{1}{k^2}  \| \La^s \phi_1 \|_0^2 + \frac{1}{2}  \| \pd_x \La^s \phi_2 \|_0^2 \right )
\leq C \left ( \| \phi_1 \|_{s}^2 + \| \phi_2 \|_{s+1}^2 + \| g \|_{s}^2 \right ). 
%label{adj_s_first_eq_9}
\]
Multiplying \eqref{adj_s_9} by $2/k^2$ and adding it to the last inequality we infer
\begin{align}
- \pd_t \left ( \frac{1}{k^2} \| \La^s \phi \|_0^2 + \frac{1}{2} \| \pd_x \La^s \phi_2 \|_0^2 \right ) \leq C \left (  \frac{1}{k^2} \| \La^s \phi \|_0^2 + \frac{1}{2} \| \pd_x \La^s \phi_2 \|_0^2 + \| \La^s g \|_0^2 \right )
\label{res_ineq_s}
\end{align}
Applying Gronwall's inequality to \eqref{res_ineq_s} similarly as in the proof of Lemma \ref{theorem_zero_order}, we obtain \eqref{ineq_negative_order}. This yields the result.
\end{proof}
Now, we are going to show that the linear system \eqref{linear_system} has a unique solution. Let us define the operator including the time derivative 
\begin{equation}
\tL := \pd_t - \cL. \label{operator_time_derivative}
\end{equation}
Its formal adjoint is given by $\tL^* = -\pd_t - \cL^*$ and satisfies $\langle \tL w , \phi \rangle_0 = \langle w, \tL^* \phi \rangle_0$.
%\begin{equation*}
%\langle \tL w , \phi \rangle = \langle w, \tL^* \phi \rangle.
%\end{equation*}
%%VOY HERE
\begin{lemma}[existence of solution to linear system]
\label{theorem_existence_linear_system}
Suppose $n \geq 3$ and let
\begin{align*}
&f \in L^2([0,T], H^n(\R)),\mbox{ }f_1 \in L^2([0,T], H^{n+1}(\R)),\\
&\rho_0 \in H^{n+1}(\R),\mbox{ }m_0 \in H^n(\R).
\end{align*}
Then the initial value problem \eqref{linear_system} has a unique solution which satisfies the estimate \eqref{integral_estimate_higher_order}.
\end{lemma}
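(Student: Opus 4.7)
The plan is to establish existence by a duality argument, using the adjoint energy estimate \eqref{ineq_negative_order}, and then upgrade regularity via the a priori higher order estimate \eqref{integral_estimate_higher_order}. Uniqueness is an easy corollary of the a priori estimate. Throughout, I would first treat smooth, compactly supported data, and pass to the limit by density at the end.

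\textbf{Existence via duality.} Fix smooth data $f, f_1, w_0$ and consider the operator $\widetilde{\cL} = \partial_t - \cL$ of \eqref{operator_time_derivative}, whose formal adjoint is $\widetilde{\cL}^* = -\partial_t - \cL^*$. For any $g \in C_0^\infty([0,T]\times\R)$, the adjoint Cauchy problem \eqref{eq:adjoint_system} admits a smooth solution $\phi$ (this can be obtained by a parabolic regularization argument, i.e.\ adding $\varepsilon \partial_x^4 \phi_1$ to the first component, solving the resulting parabolic system by standard theory, and passing to the limit using the uniform estimate \eqref{ineq_negative_order}). Given $\phi$, define the linear functional
\[
\ell(g) := \int_0^T \langle f(t), \phi(t) \rangle_0 \, dt + \langle w_0, \phi(0) \rangle_0
+ \frac{k^2}{2}\int_0^T \langle f_1(t), (\phi_2)_{xx}(t)\rangle_0\, dt,
\]
where the last term accounts for the fact that the $f_1$ contribution enters the integration by parts at one order higher than $f_2$. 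The negative norm bound \eqref{ineq_negative_order}, applied with a sufficiently negative $s = -n$, yields
\[
\sup_{t \in [0,T]}\bigl(\|\phi(t)\|_{-n} + \|\partial_x \phi_2(t)\|_{-n}\bigr) \leq C(T)\Bigl(\int_0^T \|g(\tau)\|_{-n}^2\, d\tau\Bigr)^{1/2},
\]
so that $\ell$ is a bounded linear functional on the space of $g$'s equipped with the $L^2([0,T]; H^{-n})$ norm. Hahn--Banach extension followed by Riesz representation then produces $w \in L^2([0,T]; H^n(\R)\times H^n(\R))$ with
\[
\int_0^T \langle w(t), g(t)\rangle_0\, dt = \ell(g), \qquad \forall \, g \in C_0^\infty,
\]
which is precisely the weak formulation $\langle \widetilde{\cL} w, \phi\rangle = \langle f, \phi\rangle + \text{initial terms}$, so $w$ is a weak solution of \eqref{linear_system}.

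\textbf{Regularity and the final estimate.} To show that this weak solution actually lives in the stronger class asserted, I would approximate the data by a mollified sequence $(f^{(\varepsilon)}, w_0^{(\varepsilon)})$ and construct the corresponding weak solutions $w^{(\varepsilon)}$. Since the coefficients of $\cL$ are smooth and bounded under the hypothesis \eqref{cond_bound_higher_order}, the higher order a priori estimate \eqref{integral_estimate_higher_order} holds uniformly for each $w^{(\varepsilon)}$ (after justifying it by a further Friedrichs mollifier argument in $x$, which is routine since differentiation commutes with the mollifier up to commutators that are controlled by the same Moser-type estimates already used in the higher order energy lemma). The uniform bound produces, by weak-$*$ compactness, a limit $w$ in the space
\[
\rho \in L^\infty(H^{n+1}),\ m \in L^\infty(H^n),\ (\rho_x, m_x) \in L^2(H^{n+1}\times H^n),
\]
and this limit must coincide with the weak solution previously constructed. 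The time continuity $\rho \in C([0,T]; H^{n+1})$, $m \in C([0,T]; H^n)$ follows from the equation (which gives $\rho_t, m_t$ in spaces with two fewer derivatives in $x$), so the initial condition is attained in the classical sense.

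\textbf{Uniqueness.} If $w^{(1)}, w^{(2)}$ are two solutions with the regularity stated, then $w := w^{(1)}-w^{(2)}$ solves \eqref{linear_system} with $f \equiv 0$ and $w_0 \equiv 0$, so \eqref{integral_estimate_higher_order} forces $\vertiii{w}_{n,[0,T]}=0$.

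\textbf{Main obstacle.} The delicate point is the duality step, specifically choosing function spaces that match the \emph{asymmetric} structure of the system: the $\rho$-equation is first order while the $m$-equation contains a third order dispersive term and a second order parabolic term. This asymmetry is reflected in the adjoint estimate \eqref{ineq_negative_order}, which controls $\|\phi\|_s$ and $\|\partial_x \phi_2\|_s$ but not $\|\partial_x \phi_1\|_s$, and forces the source pairing to treat $f_1$ one order higher than $f_2$ (hence the appearance of $\|f_1\|_{n+1}$ alongside $\|f\|_n$ in \eqref{integral_estimate_higher_order}). Keeping these bookkeeping details consistent through the Hahn--Banach step is the technical heart of the argument; once this is set up properly, the remaining steps are classical.
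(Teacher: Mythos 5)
Your overall strategy is essentially the one the paper uses: bound the pairing $\int_0^T\langle f,\phi\rangle_0\,dt$ using the negative-norm adjoint estimate \eqref{ineq_negative_order} with $s=-n$, obtain a bounded linear functional of $\tL^*\phi$ on $L^2([0,T];H^{-n})$, and invoke Hahn--Banach and Riesz to produce $w\in L^2([0,T];H^n)$. You are actually more careful than the paper in two places: you include the initial-condition pairing $\langle w_0,\phi(0)\rangle_0$ (the paper silently takes $w_0=0$, which is enough for its application in Lemma~\ref{lemma_local_existence} after the heat-kernel reduction but is not what the statement asserts), and you explain how to upgrade the weak $L^2H^n$ solution to the class where \eqref{integral_estimate_higher_order} holds via mollification and weak-$*$ compactness, rather than the paper's terse appeal to Sobolev embedding.

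However, the extra term $\tfrac{k^2}{2}\int_0^T\langle f_1,(\phi_2)_{xx}\rangle_0\,dt$ in your definition of $\ell(g)$ is spurious. Integrating $\langle\tL w,\phi\rangle_0$ by parts in $t$ with $\phi(T)=0$ gives exactly
\[
\int_0^T\langle w,\tL^*\phi\rangle_0\,dt=\int_0^T\langle f,\phi\rangle_0\,dt+\langle w_0,\phi(0)\rangle_0,
\]
with no additional $f_1$ contribution; appending the extra term would make $w$ a weak solution of a system with the modified source $(f_1,f_2+\tfrac{k^2}{2}\partial_x^2 f_1)$, which is not \eqref{linear_system}. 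Your justification ("$f_1$ enters the integration by parts at one order higher") misattributes a feature of the \emph{energy} estimates — where the identity $\rho_t=-m_x+f_1$ is substituted into the dispersive term, producing the $\|f_1\|_{n+1}$ on the right of \eqref{integral_estimate_higher_order} — to the duality pairing. The higher regularity $f_1\in L^2H^{n+1}$ is needed only in the regularity-upgrade step (where you apply \eqref{integral_estimate_higher_order}), not in the Hahn--Banach step itself. Remove that term and the argument is sound.
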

\begin{proof}
Let $\phi \in C_0^{\infty}([0,T] \times \R)$. We then have
\begin{equation*}
\left | \int_0^T \langle f, \phi \rangle_0 dt \right | = \left | \int_0^T \langle \La^n f, \La^{-n} \phi \rangle_0 \, dt \right | \leq
\int_0^T \| \La^n f \|_0 \| \La^{-n} \phi \|_0 \, dt.
\end{equation*}
From the inequality \eqref{ineq_negative_order} with $s = - n$ we deduce
\begin{align*}
\left | \int_0^T \langle f, \phi \rangle_0 dt \right | &\leq C(T) \int_0^T \| \La^n f \|_0
 \left ( \int_0^T \| \La^{-n} \tL^* \phi \|_0^2 \, d\tau \right )^{1/2} \!\!dt\\
&\leq C(T) \left ( \int_0^T \| \La^n f \|_0^2 \, dt \right )^2 \left ( \int_0^T \| \La^{-n} \tL^* \phi \|_0^2 \, dt \right )^{1/2}.
\end{align*}
Hence,
\begin{equation*}
\int_0^T \langle f, \phi \rangle_0 \,  dt
\end{equation*}
defines a bounded linear functional of $\tL^* \phi$ in $L^2([0,T],H^{-n}(\R))$. Applying the Hanh-Banach extension theorem and Riesz representation theorem we obtain that there exists a unique weak solution $w \in  L^2([0,T],H^n(\R))$ such that
\begin{equation*}
\int_0^T \langle f, \phi \rangle_0 \, dt = \int_0^T \langle w, \tL^* \phi \rangle_0 \, dt,
\end{equation*}
for all $\phi \in C_0^{\infty}([0,T] \times \R)$. Therefore,
\begin{equation*}
\int_\R f \varphi \, dx = \int_\R (\tL w) \varphi \, dx,\qquad \varphi \in C_0^{\infty}(\R),
\end{equation*}
for all $t \in [0,T]$ a.e. Since $n \geq 3$ the Sobolev embedding theorem implies that the solution is classical.
\end{proof}

\subsection{Proof of Theorem \ref{themlocale}}
Now, let us consider the initial value problem for system \eqref{QHD}, with
\begin{equation}
\rho(0) = \rho_0,\mbox{ }m(0) = m_0. \label{initial_condition}
\end{equation}
First we shall prove the following lemma about local existence of solutions.
\begin{lemma}
\label{lemma_local_existence}
Suppose $s \in \mathbb{N},\mbox{ }s \geq 3$. For any initial condition $(\rho_0, m_0)$ such that $\rho_0(x) \geq \delta > 0 $ and
\begin{equation*}
\rho_0 - \rc \in H^{s+1}(\R),\mbox{ }m_0 - \mc \in H^s(\R),
\end{equation*}
where $\rc > 0$ and $\mc \in \R$ are constants, there exists $T > 0$ such that the initial value problem \eqref{QHD}-\eqref{initial_condition} has a unique solution
\begin{equation*}
\rho - \rc \in L^{\infty}([0,T],H^{s+1}(\R)), \mbox{ }m - \mc \in L^{\infty}([0,T],H^s(\R)).
\end{equation*}
Moreover, $w = (\rho - \rc, m - \mc)$ satisfies the estimate
\begin{equation*}
\vertiii{w }_{s,[0,T]}^2 \leq C_n(T) \Big( \| w_0 \|_s^2 + \| \rho_0 - \rc \|_{s+1}^2 \Big),
\end{equation*}
with $w_0 = (\rho_0 - \rc, m_0 - \mc)$, and where the positive constant $C_n(T)$ depends only on $\rc$, $\mc$, the parameters of \eqref{QHD} and $T$.
\end{lemma}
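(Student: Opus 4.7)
The plan is to realize the nonlinear solution as the limit of a Picard-type iteration built from the linear theory. The key observation is that if one chooses $\bw = w$ and $f = 0$, the linear system (\ref{linear_system}) is precisely the deviation form (\ref{QHD-L-deviation}) of the nonlinear QHD system. Accordingly, I would set $w^{(0)}\equiv 0$ (so that the frozen state is the constant equilibrium) and, given an iterate $w^{(k)}$ for which $\rc + \rho^{(k)}$ stays uniformly positive, define $w^{(k+1)}$ as the unique solution of (\ref{linear_system}) with coefficients $\bA, \zeta, \eta$ built from $\bw = w^{(k)}$, with $f \equiv 0$ and initial datum $w_0 = (\rho_0 - \rc, m_0 - \mc)$. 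Lemma \ref{theorem_existence_linear_system} yields that each $w^{(k+1)}$ is well defined in the stated regularity class.

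Next I would establish uniform-in-$k$ bounds. Fix $M_0 := \|w_0\|_s^2 + \|\rho_0 - \rc\|_{s+1}^2$ and a constant $\beta_s > 0$ large enough compared to $M_0$. The higher-order estimate (\ref{integral_estimate_higher_order}) applied with $n = s$ gives
\begin{equation*}
\vertiii{w^{(k+1)}}_{s,[0,T]}^2 \leq C_s(T)\, M_0,
\end{equation*}
whenever $w^{(k)}$ satisfies the hypothesis (\ref{cond_bound_higher_order}) with constant $\beta_s$. Since $s \geq 3$, Sobolev embedding controls $\sum_{i\le 2}\|\partial_x^i w^{(k)}\|_{L^\infty}$ by $\vertiii{w^{(k)}}_{s,[0,T]}$, and for $T > 0$ sufficiently small (depending on $M_0$, $\rc$, $\mc$, and $\delta$) a straightforward induction shows the iterates stay in the ball $\vertiii{w^{(k)}}_{s,[0,T]}^2 \le 2 C_s(T) M_0$ and satisfy $\rc + \rho^{(k)}(x,t) \ge \delta/2$ uniformly, which keeps (\ref{cond_bound_higher_order}) valid at the next step.

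The crucial step is a contraction argument at lower regularity. Setting $\delta w^{(k)} := w^{(k+1)} - w^{(k)}$, subtraction of the equations for $w^{(k+1)}$ and $w^{(k)}$ produces a linear system for $\delta w^{(k)}$ with coefficients frozen at $w^{(k)}$ and an inhomogeneity $f^{(k)}$ given by $(\bA[w^{(k)}] - \bA[w^{(k-1)}])\partial_x w^{(k)}$ plus the analogous differences in the lower-order terms coming from $\cT_3, \cT_4$. Since these coefficients are smooth functions of $(\bw, \partial_x \bw)$ on the set where $\br + \rc \ge \delta/2$, and $w^{(k)}$ is bounded in $H^{s+1}\times H^s$ by the previous step, $f^{(k)}$ is controlled by $\delta w^{(k-1)}$ in one norm stronger than the one in which we estimate $\delta w^{(k)}$. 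Applying the zeroth-order estimate of Lemma \ref{theorem_zero_order} to $\delta w^{(k)}$ and shrinking $T$ further if necessary yields
\begin{equation*}
\vertiii{\delta w^{(k)}}_{[0,T]} \le \tfrac{1}{2}\, \vertiii{\delta w^{(k-1)}}_{1,[0,T]},
\end{equation*}
and the uniform $H^{s+1}\times H^s$ bound, combined with interpolation, promotes this to a Cauchy property in $C([0,T]; H^{s'+1}\times H^{s'})$ for any $s' < s$. Passing to the limit gives $w^{(k)} \to w$, with $w$ classical (since $s \ge 3$) and solving (\ref{QHD-L-deviation}); the stated energy bound follows from weak-$*$ lower semicontinuity applied to (\ref{integral_estimate_higher_order}).

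Uniqueness is handled by the same contraction argument applied to the difference of two hypothetical solutions in the existence class. The principal obstacle is the usual quasilinear loss of one derivative: the high-regularity energy estimate closes only in $H^{s+1}\times H^s$, while the differences $\delta w^{(k)}$ naturally contract in a weaker norm. Care is required to verify that the difference of coefficients $\bA[w^{(k)}] - \bA[w^{(k-1)}]$ and of the third-order term via the commutator structure of $\cT_1, \cT_3$ can indeed be absorbed in the zeroth-order estimate; the uniform positivity $\rc + \rho^{(k)} \ge \delta/2$ is what keeps all the Lipschitz constants finite and independent of $k$.
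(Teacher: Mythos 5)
Your iteration scheme is a valid alternative to the one in the paper, but it departs from the paper's construction in a substantive way that is worth spelling out, and the contraction step as written contains a norm mismatch.

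The paper does \emph{not} iterate directly from the given initial data. Instead it first homogenizes the initial condition by subtracting the heat-flow of the data: it writes $w = \tw + \hw$ with $\tw_i$ solving $\partial_t\tw_i = \partial_x^2\tw_i$, $\tw_i(0) = w_{i,0}$, and then iterates on $\hw$ via $\cM(\hw_j)\hw_{j+1} = f := -\tL(\tw)\tw$ with $\hw_{j+1}(0) = 0$. This homogenization serves two purposes. First, because $\hw_j(0)=0$ and the forcing $f$ is fixed and $L^2$ in time, shrinking $T$ makes $\vertiii{\hw_j}_{s,[0,T]}^2 \le \delta$ with $\delta$ arbitrarily small, and the contraction constant in \eqref{integral_estimate_higher_order} with $n=s-2$ then scales like $\sqrt{\delta}$. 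Second, the forcing $f = -\tL(\tw)\tw$ contains the term $\partial_x^3\tw_1$, which loses one derivative relative to the available regularity $\tw_1 \in L^2([0,T];H^{s+2})$; the paper absorbs it only by exploiting the heat equation to rewrite $\partial_x^3\tw_1 = \partial_t\partial_x\tw_1$ and then using negative Sobolev norms, as indicated just below \eqref{eq:iteration}. Your scheme sidesteps both issues entirely: with $f\equiv 0$ and nonzero initial data, the uniform bound $\vertiii{w^{(k+1)}}_{s,[0,T]}^2 \le C_s(T) M_0$ from \eqref{integral_estimate_higher_order} follows directly (no forcing to estimate), and the contraction factor instead comes from the extra factor of $T$ gained when bounding $\int_0^T\Vert f^{(k)}(\tau)\Vert_0^2\,d\tau \le T\,C(M_0)^2\,\vertiii{\delta w^{(k-1)}}_{[0,T]}^2$ for the difference system. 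This is a cleaner route, at the cost of having to verify the self-consistent choice $C_s(T,\beta_s)M_0 \le \beta_s$ and the lower bound $\rho^{(k)}+\rc \ge \delta/2$, both of which you address.

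There is, however, a genuine flaw in your contraction inequality as stated: $\vertiii{\delta w^{(k)}}_{[0,T]} \le \tfrac{1}{2}\vertiii{\delta w^{(k-1)}}_{1,[0,T]}$ has the $\vertiii{\cdot}_{1,[0,T]}$ norm on the right and $\vertiii{\cdot}_{[0,T]}$ on the left, so iterating it does not produce a Cauchy sequence in any fixed norm; it would only shift regularity down one notch per step and cannot be closed by interpolation. The good news is that the correct statement with matched norms does hold: since $f^{(k)}_1 = 0$ and $f^{(k)}_2$ involves only $\delta w^{(k-1)}$, $\partial_x\delta\rho^{(k-1)}$ (through $\bA$, $\zeta$, $\eta$) multiplied by $L^\infty$-bounded quantities built from $w^{(k)}$, one has $\Vert f^{(k)}(\tau)\Vert_0 \le C(M_0)\vertiii{\delta w^{(k-1)}}_{[0,T]}$, and the norm $\vertiii{\cdot}_{[0,T]}$ already controls $\sup_t\Vert\partial_x\delta\rho^{(k-1)}\Vert_0$, so no $1$-norm is needed on the right. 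Then \eqref{eq:integ_estimate_0_order} (with zero initial difference) yields $\vertiii{\delta w^{(k)}}_{[0,T]}^2 \le C(T)T\,C(M_0)^2\,\vertiii{\delta w^{(k-1)}}_{[0,T]}^2$, and shrinking $T$ gives a genuine contraction in $\vertiii{\cdot}_{[0,T]}$. With that correction, your interpolation step and the weak-$*$ argument for the final energy bound go through as described.
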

\begin{proof}
The nonlinear problem can be written as
\begin{equation}
\left\{
\begin{aligned}
\tL(w)w &= 0,\\
w(0) &= w_0.
\end{aligned} 
\right.
\label{eq:quasilinear}
\end{equation}
In order to recast the system so that we have homogeneous initial data, let $\tilde{w}_i$ be the classical solutions to the heat equation,
\begin{align*}
\pd_t \tw_i &= \pd_x^2 \tw_i,\\
\tw_i(0) &= w_{i,0},\quad i=1,2,
\end{align*}
given by
\begin{equation*}
\tw_i(x,t) = \int_\R K(x-y,t) w_{i,0}(y)dy,\qquad x \in \R, \; t> 0, \; i=1,2,
\end{equation*}
where
\begin{equation*}
K(x,t) = \frac{1}{\sqrt{4 \pi t}} \exp\left (-\frac{x^2}{4t} \right ),
\end{equation*}
is the heat kernel. From standard theory we clearly have the estimates
\begin{align*}
\| \tw_1(t) \|_{s+1} &\leq \| \rho_0 - \rc\|_{s+1},\\
\int_0^T \| \pd_t \tw_1 \|^2_s \, dt + \int_0^T \| \tw_1 \|^2_{s+2} \, dt &\leq C \| \rho_0 - \rc\|^2_{s+1},\\
\| \tw_2(t) \|_s &\leq \| m_0 - \mc\|_s,\\
\int_0^T \| \pd_t \tw_2 \|^2_{s-1} \, dt + \int_0^T \| \tw_2 \|^2_{s+1} \, dt &\leq C \| m_0 - \mc\|^2_s.
\end{align*}
Now, let us introduce the new variable $\hw = w - \tw$, where $\hw = (\hat{\rho}, \hat{m})^\top$, and let us define the operator
\begin{equation*}
\cM(\hw) \hw := \tL(\tw + \hw)\hw + ( \tL(\tw + \hw) - \tL(\tw))\tw.
\end{equation*}
Then, the system \eqref{eq:quasilinear} becomes
\begin{equation}
\left\{
\begin{aligned}
\cM(\hw)\hw &= f(\hw),\\
\hw(0) &= 0,
\end{aligned}
\right.
 \label{eq:sys_homogeneous_IC}
\end{equation}
where $f = -\tL(\tw)\tw$. We will solve \eqref{eq:sys_homogeneous_IC} by iteration. Set
\begin{equation}
\left\{
\begin{aligned}
\cM(\hw_j)\hw_{j+1} &= f,\\
\hw_{j+1}(0) &= 0,
\end{aligned}
\right.
 \label{eq:iteration}
\end{equation}
for $j \in \mathbb{N}_0$ and with $\hw_0 = 0$. Note that the operator $\cM$ in \eqref{eq:iteration} has the same structure as $\tL$ defined in \eqref{operator_time_derivative} and Theorem \ref{theorem_existence_linear_system} applies to the system \eqref{eq:iteration}.
We show how to treat three of the terms in the proof of \eqref{eq:integ_estimate_0_order} for equation \eqref{eq:iteration}. Indeed, we have
\begin{equation*}
\int_\R (\tw_2)_t m \, dx \leq \| (\tw_2)_t \|_{-1} \| m \|_1 \leq \frac{1}{2 \e_1} \| (\tw_2)_t\|^2_{-1} + \frac{\e_1}{2} \| m \|_0^2 + \frac{\e_1}{2} \| m_x \|_0^2.
\end{equation*}
Moreover,
\begin{equation*}
\int_\R (\tw_2)_{xx} m \, dx = \int_\R (\tw_2)_t m \, dx,
\end{equation*}
and, furthermore,
\begin{align*}
\int_\R (\tw_1)_{xxx} m \, dx &= \int_{\R} \pd_t (\tw_1)_x m \, dx \\
&\leq \| \pd_t (\tw_1)_x\|_{-1} \| m \|_{1}\\
&\leq \frac{1}{2 \e_2} \| \pd_t (\tw_1)_x \|^2_{-1} + \frac{\e_2}{2} \| m \|_0^2 + \frac{\e_2}{2} \| m_x \|_0^2.
\end{align*}
The other terms are treated similarly.

Let $i \in \mathbb{N}_0$. Thanks to the Sobolev embedding theorem if $T,\mbox{ } \delta > 0$ are sufficiently small and $\vertiii{\hw}_{s,[0,T]}^2 \leq \delta$, then $\br + \tw_1(x,t) + \hat{\rho}(x,t) > 0$ for $x \in \mathbb{R}$ and $0 \leq t \leq T$. Therefore, it suffices to show that there exist $T > 0$ and sufficiently small $\delta > 0$ such that that the successive iterations satisfy
\begin{align}
\vertiii{\hw_i}_{s,[0,T]}^2 &\leq \delta \leq \beta_s, \qquad i \in \mathbb{N}_0, \label{iteration_ineq_1}\\
\vertiii{\hw_i - \hw_{i-1}}_{s-2,[0,T]} &\leq \frac{1}{2} \vertiii{\hw_{i-1} - \hw_{i-2}}_{s-2,[0,T]},\qquad i \geq 2 \label{iteration_ineq_2},
\end{align}
Suppose that \eqref{iteration_ineq_1} and \eqref{iteration_ineq_2} are satisfied for $i \leq j$. The inequality \eqref{integral_estimate_higher_order} with $n = s$ implies that
\begin{equation*}
\vertiii{\hw_{j+1}}_{s,[0,T]}^2 \leq  C_n(T) \int_{0}^{T} \left ( \| f \|_s^2 + \| f_1 \|_{s+1}^2 \right ) dt.
\end{equation*}
Therefore we can choose $T > 0$ small enough such that $\vertiii{\hw_{j+1}}_{s,[0,T]}^2 \leq \delta \leq \beta_s$.

Now, for $i \in \mathbb{N}_0$ define $v_i := \hw_{i+1} - \hw_i$. Then, $v_i$ satisfies
\begin{equation*}
\left\{
\begin{aligned}
\cM(\hw_i)v_i &= \big (\cM(\hw_{i-1}) - \cM(\hw_i) \big ) \hw_i,\\
v_i(0) &= 0.
\end{aligned}
\right.
\end{equation*}
Moreover, since $H^n(\R)$ is a Banach algebra for $n \geq 1$, we have
\begin{equation*}
\left \| \big (\cM(\hw_{j-1}) - \cM(\hw_j) \big ) \hw_j  \right \| _{s-2}^2 \leq C_{s-2} \delta \| \hat{\rho}_{j} - \hat{\rho}_{j-1} \|_{s-1}^2 + C_{s-2} \delta \| \hat{m}_{j} - \hat{m}_{j-1} \|_{s-2}^2.
\end{equation*}
Applying \eqref{integral_estimate_higher_order} with $n = s- 2$ we obtain
\begin{equation*}
\vertiii{\hw_{j+1} - \hw_j }_{s-2,[0,T]} \leq \tilde{C}_{s-2}(T) \sqrt{\delta} \vertiii{\hw_j - \hw_{j-1} }_{s-2,[0,T]}.
\end{equation*}
Choosing $\delta > 0$ such that $\tilde{C}_{s-2}(T) \sqrt{\delta} \leq 1/2$ concludes the proof of \eqref{iteration_ineq_1} and \eqref{iteration_ineq_2}.
\end{proof}
Thanks to the Sobolev embedding we infer Theorem \ref{themlocale} from Lemma \ref{lemma_local_existence}. This concludes the proof of Theorem \ref{themlocale}.

\section{Linear decay rates}
\label{seclinear}

In this section, we establish the decay of solutions to the linearization of system \eqref{QHD} around an arbitrary constant equilibrium state $U_* = (\rho_*, m_*) \in \R^2$, with $\rho_* > 0$, and satisfying the subsonicity assumption
\begin{equation}
\label{subsonic}
p'(\rho_*) > \frac{m_*^2}{\rho_*^2}.
\end{equation}

In contrast to the estimates from Section \ref{seclocale}, here we focus on \emph{stability} estimates. To that end, we examine the decay structure of the system in the sense of Humpherys' analysis for linear higher order systems in the Fourier space (cf. \cite{Hu05}). Symbol symmetrizability and the existence of an appropriate compensating matrix symbol are key ingredients to establish the optimal decay of the semigroup.

\subsection{Linearization and symbol symmetrizability}

We start by observing that system \eqref{QHD} can be recast in conservation form. Indeed, following Lattanzio \emph{et al.} \cite{LMZ20b} let us write the Bohm potential as
\[
\rho\left(\displaystyle\frac{(\sqrt{\rho})_{xx}}{\sqrt{\rho}}\right)_x = \frac{1}{2} \Big( \rho \big( \ln \rho \big)_{xx}\Big)_x.
\]
Therefore, system \eqref{QHD} in conservation form reads
\begin{equation}
\label{QHDc}
	\begin{cases}
		\rho_t + m_x=0,\\
		m_t +\left(\displaystyle\frac{m^2}{\rho}+p(\rho)\right)_x=\mu m_{xx} + \tfrac{1}{2} k^2 \Big( \rho \big( \ln \rho \big)_{xx}\Big)_x.
	\end{cases}
\end{equation}
The conservative form of the equations will play an important role in the establishment of the energy estimates with the appropiate regularity for the density variable.

Consider an arbitrary constant equilibrium state $U_* = (\rho_*, m_*) \in \R^2$ satisfying $\rho_* > 0$ and \eqref{subsonic}, and let $(\rho + \rho_*, m + m_*)$ be a solution to \eqref{QHDc} where $\rho$ and $m$ represent perturbations. Substituting into \eqref{QHDc} and after some elementary algebra, one arrives at a nonlinear perturbation system of the form
\begin{equation}
\label{QHDp}
	\begin{cases}
		\rho_t + m_x=0,\\
		m_t + \left( p'(\rho_*) - \displaystyle{\frac{m_*^2}{\rho_*^2}}\right) \rho_x + \left( \displaystyle{\frac{2m_*}{\rho_*}}\right) m_x = \mu m_{xx} + \tfrac{1}{2} k^2 \rho_{xxx} + \partial_x N_2,
	\end{cases}
\end{equation}
where $N_2$ contains the nonlinear terms and is of the form
\begin{equation}
\label{orderN2}
N_2 = O\big(\rho^2 + m^2 + \rho_x^2 + |\rho||\rho_{xx}|\big),
\end{equation}
as the reader may easily verify. In other words, the system \eqref{QHD} can be rewritten as a system of the form
\begin{equation}
\label{nonlinQHD}
U_t = \cA U + \partial_x \begin{pmatrix}0 \\ N_2 \end{pmatrix},
\end{equation}
in terms of the (perturbed) state variables $U = (\rho, m)^\top$ and where $\cA$ is a differential operator with constant coefficients. Notice that the nonlinear terms are expressed in conservative form.

Let us consider the linear part of system \eqref{QHDp}, which reads
\begin{equation}
\label{linearQHD}
U_t + A_* U_x = B_* U_{xx} + C_* U_{xxx},
\end{equation}
where 
\begin{equation}
\label{defcoeffs}
\begin{aligned}
U = \begin{pmatrix} \rho \\ m \end{pmatrix}, &\qquad A_* = \begin{pmatrix} 0 & 1 \\ p'(\rho_*) - m_*^2 / \rho_*^2 & 2m_* / \rho_* \end{pmatrix},\\
B_* = \begin{pmatrix} 0 & 0 \\ 0 & \mu \end{pmatrix}, &\qquad C_* = \begin{pmatrix} 0 & 0 \\ \tfrac{1}{2}k^2 & 0 \end{pmatrix},
\end{aligned}
\end{equation}
or, equivalently,
\begin{equation}
\label{defcA}
U_t = \cA U := \big( -A_* \partial_x + B_* \partial_x^2 + C_* \partial_x^3\big) U.
\end{equation}
Take the Fourier transform of \eqref{linearQHD}. This yields 
\begin{equation}
\label{FouQHD}
\hU_t + \big( i \xi A_* + \xi^2 B_* + i \xi^3 C_* \big) \hU = 0,
\end{equation}
where $\hU = \hU(\xi,t)$ denotes the Fourier transform of $U$. The evolution of the solutions to \eqref{FouQHD} reduces to solving the spectral equation 
\begin{equation}
\label{spect}
\big( \lambda I + i \xi A_* + \xi^2 B_* + i \xi^3 C_* \big)\hU = 0,
\end{equation}
for $\lambda \in \C$ and $\xi \in \R$ denoting time frequencies and (Fourier) wave number, respectively. It is said that the linear operator $\cA$ is \emph{strictly dissipative} if for each $\xi \neq 0$ then all solutions to the spectral equation \eqref{spect} satisfy $\Re \lambda (\xi) < 0$ (cf. Humpherys \cite{Hu05}; see also \cite{KaSh88a,ShKa85}). 
\begin{remark}
\label{remtypediss}
Ueda \emph{et al.} \cite{UDK12, UDK18} further classify strictly dissipative systems as follows. The linear system is called strictly dissipative of type $(p,q)$, with $p, q \in \Z$, $p, q \geq 0$, provided that the solutions of the spectral problem \eqref{spect} satisfy
\[
\Re \lambda(\xi) \leq - \, \frac{C |\xi|^{2p}}{(1 + |\xi|^2)^{q}}, \qquad \forall \xi \neq 0, 
\]
for some uniform constant $C > 0$. The system is said to be of standard type when $p = q$ \cite{UDK12}, and of regularity-loss type when $p < q$ \cite{UDK18}. Notice that the heat equation is a system with dissipativity of type $(1,0)$. Hence, the third case when $p > q$ is called dissipativity of regularity-gain type \cite{KSX22}. The type of dissipativity will be reflected in the decay rate of the solutions to the linearized system. Notice that strict dissipativity is equivalent to the stability of the essential spectrum of the linearized operator $\cA$ when computed, for example, with respect to the space $L^2(\R)$ of finite energy perturbations.
\end{remark}

The following result justifies the subsonicity assumption \eqref{subsonic} in the study of strict dissipativity.

\begin{lemma}
\label{lemsupersonic}
Suppose that a constant equilibrium state $(\rho_*,m_*) \in \R^2$ with $\rho_* > 0$ is \emph{supersonic}, that is,
\begin{equation}
\label{supersonic}
p'(\rho_*) < \frac{m_*^2}{\rho_*^2}.
\end{equation}
Then the operator $\cA$ defined in \eqref{defcA} violates the strict dissipativity condition. More precisely, there exist certain values of $\xi \in\R$ for which the solutions to the spectral equation \eqref{spect} satisfy $\Re \lambda(\xi) > 0$.
\end{lemma}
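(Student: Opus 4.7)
The plan is to compute the dispersion relation explicitly and study its roots perturbatively near $\xi = 0$, where the spectrum lies on the imaginary axis; under the supersonic assumption the first nontrivial correction pushes one branch into the open right half-plane. Evaluating the determinant of the matrix in \eqref{spect} with the coefficients from \eqref{defcoeffs} gives the quadratic
\[
\lambda^2 + \Big(\mu \xi^2 + \tfrac{2 i m_*}{\rho_*}\,\xi\Big)\lambda + \Big(p'(\rho_*) - \tfrac{m_*^2}{\rho_*^2}\Big)\xi^2 + \tfrac{k^2}{2}\xi^4 = 0.
\]
At $\xi = 0$ this has a double root at the origin, so I would insert the Puiseux/power series ansatz $\lambda(\xi) = \lambda_0\,\xi + \lambda_1\,\xi^2 + O(\xi^3)$ and identify coefficients of $\xi^j$.

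Writing $v := m_*/\rho_*$ and $c := \sqrt{p'(\rho_*)}$ (the sound speed), matching the $O(\xi^2)$ terms gives $\lambda_0^2 + 2 i v \lambda_0 + (p'(\rho_*) - v^2) = 0$, whose two solutions are
\[
\lambda_0^{\pm} = i\bigl(-v \pm c\bigr),
\]
both purely imaginary (these are just the advection speeds relative to the background flow). Matching the $O(\xi^3)$ terms next yields a linear equation $2(\lambda_0^\pm + i v)\lambda_1^\pm + \mu\lambda_0^\pm = 0$, whose solution is the \emph{real} number
\[
\lambda_1^{\pm} = \frac{\mu(v \mp c)}{\pm 2c}.
\]

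The supersonic condition \eqref{supersonic} is exactly $v^2 > c^2$, i.e.\ $|v| > c > 0$; in particular $m_* \neq 0$, since $p(\rho) = \rho^\gamma$ with $\gamma > 1$ forces $c^2 = p'(\rho_*) > 0$, so $m_* = 0$ is incompatible with \eqref{supersonic}. If $v > c$, then $\lambda_1^{+} = \mu(v-c)/(2c) > 0$, and if $v < -c$, then $\lambda_1^{-} = -\mu(v+c)/(2c) > 0$. Either way, one of the two branches satisfies
\[
\Re \lambda(\xi) = \lambda_1^{\pm}\, \xi^2 + O(\xi^3) > 0
\]
for all sufficiently small $|\xi| > 0$, which contradicts the strict dissipativity condition and proves the claim. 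The main subtlety is the first-order perturbation computation; the rest is a direct sign check, so I do not anticipate any substantial obstacle beyond bookkeeping in the Puiseux expansion.
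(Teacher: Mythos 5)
Your proof is correct. You obtain the same dispersion relation as the paper and, like the paper, test strict dissipativity by examining the roots near $\xi = 0$; the difference is in the mechanics of that root analysis. The paper solves the quadratic \eqref{disprel} explicitly via the discriminant $\Delta(\xi) = a(\xi) + ib(\xi)$, isolates $\Re\lambda_+(\xi) = -\tfrac{1}{2}\mu\xi^2 + \tfrac{1}{2}\Re\Delta(\xi)^{1/2}$, and reduces the sign question to the inequality $b(\xi)^2 > 4\mu^4\xi^8 - 4a(\xi)\mu^2\xi^4$, which after squaring and simplification collapses to $\xi^2(2\beta_* + O(\xi^2)) > 0$ with $\beta_* := m_*^2/\rho_*^2 - p'(\rho_*) > 0$. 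You instead perform a Taylor/Puiseux expansion $\lambda(\xi) = \lambda_0\xi + \lambda_1\xi^2 + O(\xi^3)$ about the double root at $\xi = 0$: matching $O(\xi^2)$ gives the purely imaginary wave speeds $\lambda_0^\pm = i(-v \pm c)$, and matching $O(\xi^3)$ gives the real diffusion coefficients $\lambda_1^\pm = \mu(v \mp c)/(\pm 2c)$, one of which is positive precisely when $|v| > c$. Both routes exploit the same structural feature (the $O(\xi^2)$ real-part contribution is what changes sign across sonic); your perturbative method has the advantage of immediately exhibiting the asymptotic eigenvalue expansion and the destabilized branch, while the paper's direct square-root manipulation avoids justifying the validity of the expansion (which in your argument rests implicitly on the $O(\xi^2)$ characteristic roots $\lambda_0^\pm$ being simple, since $c > 0$). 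One minor point worth making explicit in a final write-up: the double root at $\xi = 0$ splits analytically rather than as a genuine Puiseux (fractional-power) branching because the discriminant has a nonvanishing $\xi^2$ coefficient, so the power-series ansatz is justified.
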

\begin{proof}
See Appendix \ref{secappen}.
\end{proof}

Now, we follow Humpherys \cite{Hu05} and split the symbol into even and odd terms. Let us define the symbols,
\[
\begin{aligned}
A(\xi) &:= A_* + \xi^2 C_* = \begin{pmatrix} 0 & 1 \\ p'(\rho_*) -m_*^2 / \rho_*^2 + \tfrac{1}{2} k^2 \xi^2 & 2m_* / \rho_* \end{pmatrix}, \quad &\text{(odd)},\\
B(\xi) &:= \xi^2 B_* = \xi^2 \begin{pmatrix} 0 & 0 \\ 0 & \mu \end{pmatrix}, \quad &\text{(even)}
\end{aligned}
\]
so that the evolution equation \eqref{FouQHD} is recast as
\begin{equation}
\label{FouQHD2}
\hU_t + \big( i \xi A(\xi) + B(\xi) \big) \hU = 0.
\end{equation}

Notice that in the matrix symbol $A(\xi)$ we have gathered the transport and dispersive terms together (the odd part of the symbol), whereas the only dissipation term due to viscosity (the even part of the symbol) is encoded into the matrix $B(\xi)$. In this fashion, Humpherys mimics the algebraic structure of second order (purely viscous) systems of Kawashima and Shizuta \cite{KaSh88a,ShKa85} at the Fourier level. Humpherys thereby introduces the following fundamental concept of symbol symmetrization, which generalizes the standard notion of symmetrizability of Lax and Friedrichs \cite{FLa67,Frd54} (see also Godunov \cite{Godu61a}).

\begin{definition}[Humpherys \cite{Hu05}]
\label{defsymH}
The operator $\cA$ is \emph{symbol symme\-trizable} if there exists a smooth, symmetric matrix-valued function, $S=S(\xi)>0$, positive-definitive, such that $S(\xi)A(\xi)$ and $S(\xi)B(\xi)$ are symmetric, with  $S(\xi)B(\xi)\geq 0$ (positive semi-definite) for all $\xi \in \R$.
\end{definition}  

\begin{remark}
Let us recall that a generic (quasilinear) system of equations of the form $U_t = \sum_j A_j(U) \partial_x^j U$ is said to be symmetrizable in the classical sense of Friedrichs if, for any constant state $\Us$, there exists a symmetric, positive definite matrix $S=S(\Us) > 0$ such that $S(\Us)A_{j}(\Us)$ are all simultaneously symmetric. Clearly, every symmetrizable system in the sense of Friedrichs is symbol symmetrizable, but the converse is not true.
\end{remark}

\begin{lemma}
\label{Sym2Full}
Assume the subsonicity \eqref{subsonic} of the equilibrium state $U_* = (\rhos,\ms)$ with $\rho_* > 0$. Then the linearized QHD system \eqref{linearQHD} is symbol symmetrizable, but not symmetrizable in the sense of Friedrichs. One symbol symmetrizer is of the form
\begin{equation}
\label{symm}
S(\xi) = \begin{pmatrix}
\alpha(\xi) & 0 \\ 0 & 1 
\end{pmatrix} \in C^{\infty}\big( \R; \R^{2\times 2}  \big),
\end{equation} 
where
\begin{equation}
\label{defalpha}
\alpha(\xi) := p'(\rho_*) - \frac{m_*^2}{\rho_*^2}+ \tfrac{1}{2}k^2 \xi^2 > 0, \qquad \xi \in \R.
\end{equation}
\end{lemma}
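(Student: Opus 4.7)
The plan is a direct verification, checking each property in Definition \ref{defsymH} for the proposed candidate $S(\xi)$, followed by a short algebraic obstruction argument for the Friedrichs non-symmetrizability.

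First, I would establish the three properties of $S(\xi)$ as a symbol symmetrizer. Smoothness is immediate since $\alpha(\xi)$ is a polynomial in $\xi$. Positive definiteness of $S(\xi)$ amounts to checking $\alpha(\xi)>0$ for all $\xi\in\R$, and this is precisely where the subsonic assumption \eqref{subsonic} enters: the constant term $p'(\rho_*)-m_*^2/\rho_*^2$ is strictly positive, and the quadratic term $\tfrac{1}{2}k^2\xi^2\geq 0$ only enhances the bound, so $\alpha(\xi)\geq \alpha(0)>0$. The symmetry of $S(\xi)$ itself is trivial since it is diagonal.

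Next I would compute the two products. By direct multiplication,
\begin{equation*}
S(\xi)A(\xi)=\begin{pmatrix}\alpha(\xi) & 0\\ 0 & 1\end{pmatrix}\begin{pmatrix}0 & 1\\ \alpha(\xi) & 2m_*/\rho_*\end{pmatrix}=\begin{pmatrix}0 & \alpha(\xi)\\ \alpha(\xi) & 2m_*/\rho_*\end{pmatrix},
\end{equation*}
which is symmetric; note that the matching of the off-diagonal entries is exactly why $\alpha(\xi)$ was defined to absorb both the hyperbolic transport piece and the $\xi^2$ contribution coming from $C_*$. Similarly,
\begin{equation*}
S(\xi)B(\xi)=\begin{pmatrix}\alpha(\xi) & 0\\ 0 & 1\end{pmatrix}\begin{pmatrix}0 & 0\\ 0 & \mu\xi^2\end{pmatrix}=\begin{pmatrix}0 & 0\\ 0 & \mu\xi^2\end{pmatrix},
\end{equation*}
which is symmetric, and positive semi-definite since $\mu>0$. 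This completes the verification that $S(\xi)$ is a symbol symmetrizer.

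For the second statement, I would argue by contradiction. Assume there exists a constant symmetric positive-definite matrix
\begin{equation*}
S_0=\begin{pmatrix}a & b\\ b & c\end{pmatrix},\qquad a,c>0,\quad ac-b^2>0,
\end{equation*}
such that $S_0A_*$, $S_0B_*$ and $S_0C_*$ are all symmetric. Computing
\begin{equation*}
S_0 C_* = \begin{pmatrix}a & b\\ b & c\end{pmatrix}\begin{pmatrix}0 & 0\\ \tfrac{1}{2}k^2 & 0\end{pmatrix}=\tfrac{1}{2}k^2\begin{pmatrix}b & 0\\ c & 0\end{pmatrix},
\end{equation*}
and imposing symmetry of $S_0 C_*$ forces $c=0$ (since $k>0$), which contradicts positive definiteness. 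Hence no Friedrichs symmetrizer exists.

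The computations are all routine; the only substantive point (and not really an obstacle, but the conceptual content) is the necessity of the $\xi$-dependent entry $\alpha(\xi)$: the dispersive term $C_*$ contributes a $\xi^2$ to the odd symbol $A(\xi)$, so any constant symmetrizer is doomed to fail as the obstruction above makes explicit, while the quadratic-in-$\xi$ part of $\alpha(\xi)$ is exactly what cancels this mismatch at the symbol level. Subsonicity is then precisely the condition ensuring that the constant part of $\alpha(\xi)$ keeps $S(\xi)$ uniformly positive definite.
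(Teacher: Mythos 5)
Your proposal is correct and follows essentially the same route as the paper: direct verification of Definition \ref{defsymH} for the candidate $S(\xi)$, then an algebraic obstruction for the Friedrichs non-symmetrizability. The only substantive remark is that you have made explicit what the paper leaves to the reader, and in doing so you show that the symmetry constraint on $S_0 C_*$ alone (forcing $c=0$) already contradicts positive definiteness — the constraint from $S_0 A_*$, which the paper also invokes, is not even needed. This is a small but genuine streamlining of the obstruction argument.
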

\begin{proof}
It is easy to verify that the symbol $S(\xi)$ defined in \eqref{symm} is smooth, symmetric and positive definite because of the condition \eqref{subsonic}. By inspection, one thereby obtains
\[
S(\xi)A(\xi) =  \begin{pmatrix} \alpha(\xi) & 0 \\ 0 & 1 \end{pmatrix}\begin{pmatrix} 0 & 1 \\ \alpha(\xi) & 2m_* / \rho_*\end{pmatrix} = \begin{pmatrix} 0 & \alpha(\xi) \\ \alpha(\xi) & 2m_* / \rho_* \end{pmatrix},
\]
and $S(\xi)B(\xi) = B(\xi)$, which are symmetric matrices with $S(\xi)B(\xi) \geq 0$. This easily shows that the operator $\cA$ is symbol symmetrizable. To prove that the system is not Friedrichs symmetrizable, suppose there exists a positive-definite symmetrizer of the form 
\[
S = \begin{pmatrix}
s_{1} & s_{2} \\ s_{2} & s_{3} 
\end{pmatrix}.
\]
Then the condition on $SA_*$ and $SC_*$ to be simultaneously symmetric matrices implies that $S$ cannot be positive definite, as the reader may easily verify. The lemma is proved.
\end{proof}

\begin{remark}
Up to our knowledge, the QHD system \eqref{QHD} is only the third example of a symbol symmetrizable system which is not Friedrichs symmetrizable, apart from the isothermal Navier-Stokes-Korteweg model (cf. \cite{Hu05,PlV22}) and its non isothermal version (the so called Navier-Stokes-Fourier-Korteweg system \cite{PlV23}). The existence of these simple and physically relevant counterexamples exhibit the importance of Definition \ref{defsymH}.
\end{remark}

Henceforth, multiply equation \eqref{FouQHD2} on the left by the symmetrizer $S = S(\xi)$ defined in \eqref{symm} to rewrite it in symmetric form,
\begin{equation}
\label{sFouQHD}
S(\xi) \hU_t + ( i \xi \tiA(\xi) + \tiB(\xi) ) \hU = 0,
\end{equation}
where
\[
\tiA(\xi) := S(\xi) A(\xi) = \begin{pmatrix} 0 & \alpha(\xi) \\ \alpha(\xi) & 2m_* / \rho_* 
\end{pmatrix}, \qquad
\tiB(\xi) := S(\xi) B(\xi) = \xi^2 \begin{pmatrix} 0 & 0 \\ 0 & \mu \end{pmatrix}.
\]

Once the system in Fourier space is put into symmetric form, we can recall the following fundamental notions (cf. \cite{Hu05,KaSh88a,ShKa85}). Let $S$, $\tiA$, $\tiB \in C^{\infty} \left( \R; \R^{2 \times 2} \right)$ be smooth, real matrix-valued functions of the variable $\xi \in \R$. Assume that $S,$ $\tiA$, $\tiB$ are symmetric for all $\xi \in \R$, $S >0$ is positive definite and $\tiB \geq 0$ is positive semi-definite. The triplet $(S, \tiA, \tiB)$ is said to be genuinely coupled if for all $\xi \neq 0$ every vector $V \in \ker \tiB(\xi)$, with $V \neq 0$, satisfies the condition $\big( \varrho S(\xi)  +    \tiA(\xi) \big) V \neq 0$ for any $\varrho \in \R$. In that case we say that the operator $\cA$ satisfies the \emph{genuine coupling condition}. 

Likewise, under the same assumptions of symmetry, smoothness and positive semidefiniteness, if a smooth, real matrix valued function,  $K\in C^{\infty} \left( \R; \R^{3 \times 3} \right)$, satisfies
\begin{itemize}
\item[(a)] $K(\xi)S(\xi)$ is skew-symmetric for all $\xi\in \R$; and,
\item[(b)] $\big[K(\xi)\tiA(\xi)\big]^{s}+ \tiB(\xi) \geq \theta(\xi) I > 0$ for all $\xi \in \R$, $\xi \neq 0$, and some $\theta = \theta(\xi) > 0$,
\end{itemize}
then $K$ is said to be a \emph{compensating matrix symbol} for the triplet $(S, \tiA, \tiB)$. Here $[M]^{s} := \frac{1}{2}(M+M^\top)$ denotes the symmetric part of any real matrix $M$. 

The concepts of strict dissipativity, genuine coupling and the existence of a compensating matrix function are equivalent to each other (see Theorems 3.3 and 6.3 by Humpherys \cite{Hu05}), as it is stated in the following equivalence theorem, under the extra constant multiplicity assumption.
\begin{theorem}[equivalence theorem \cite{Hu05}]
\label{HuThSym}
Suppose that a symbol symmetrizer, $S= S(\xi)$, $S \in C^\infty(\R; \R^{3 \times 3})$, exists for the operator $\mathcal{A}$ in the sense of Definition \ref{defsymH}, and that $\tiA(\xi) = S(\xi)A(\xi)$ is of constant multiplicity in $\xi$, that is, all its eigenvalues are semi-simple and with constant multiplicity for all $\xi \in \R$. Then the following conditions are equivalent:
\begin{itemize}
\item[(a)] $\mathcal{A}$ is strictly dissipative.
\item[(b)] $\mathcal{A}$ is genuinely coupled.
\item[(c)] There exists a compensating matrix function for the triplet $(S,SA, SB)$.
\end{itemize}
\end{theorem}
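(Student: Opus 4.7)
The plan is to establish the three-way equivalence by a cycle of implications, following the strategy originally developed by Shizuta and Kawashima for viscous symmetric systems and extended by Humpherys to higher-order operators. Throughout, I would multiply equation \eqref{spect} on the left by the symbol symmetrizer $S(\xi)$ to obtain the generalized eigenvalue problem
\begin{equation*}
\big( \lambda S(\xi) + i\xi \tiA(\xi) + \tiB(\xi)\big) V = 0,
\end{equation*}
with $S = S^\top > 0$, $\tiA = \tiA^\top$, and $\tiB = \tiB^\top \geq 0$. Strict dissipativity, genuine coupling, and the existence of a compensating matrix are all invariant under this reformulation, so it suffices to work in the symmetric setting.

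The easiest direction is $(a) \Rightarrow (b)$, which I would prove by contrapositive. If genuine coupling fails at some $\xi_0 \neq 0$, there exists a nonzero $V \in \ker \tiB(\xi_0)$ and $\varrho \in \R$ with $(\varrho S(\xi_0) + \tiA(\xi_0))V = 0$. Then $\lambda = -i\xi_0 \varrho \in i\R$ is an eigenvalue with eigenvector $V$, contradicting strict dissipativity. For $(c)\Rightarrow(a)$ I would run a Lyapunov-type energy argument at the Fourier level: pairing the symmetric system against $V$ under the inner product weighted by $S$ yields $\Re \lambda \, \langle S V, V\rangle = -\langle \tiB V, V\rangle \leq 0$, and pairing against $-i\xi K(\xi) S V$ and taking real parts, together with the skew-symmetry of $KS$ (condition (a) of the definition), produces the missing coercivity term $\xi^2 \langle [(K\tiA)^s]V,V\rangle$. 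Combining both estimates and invoking condition (b) on $K$ gives a strictly negative bound on $\Re \lambda$ in terms of the uniform lower bound $\theta(\xi)$.

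The main obstacle, and the heart of Humpherys's theorem, is the constructive implication $(b) \Rightarrow (c)$: building a \emph{smooth} compensating matrix symbol $K \in C^\infty(\R; \R^{n\times n})$ from genuine coupling. Pointwise for each $\xi$, one can choose $K(\xi)$ as follows. Using the constant multiplicity hypothesis on $\tiA$, one obtains a smooth spectral decomposition of $\R^n$ into eigenspaces of $S^{-1}\tiA$, and in particular a smooth decomposition $\R^n = \ker \tiB(\xi) \oplus \ker\tiB(\xi)^{\perp_S}$, where $\perp_S$ denotes $S$-orthogonality. The key algebraic fact extracted from genuine coupling is that the restriction of $S^{-1}\tiA$ to $\ker \tiB$, modulo the scalar part $\varrho I$, has no invariant subspaces contained in $\ker \tiB$; equivalently, $\tiA$ maps $\ker\tiB$ non-trivially into its complement for every eigencomponent. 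I would use this to define $K(\xi) = \sum_j c_j(\xi)(P_j \tiA Q - Q \tiA P_j) S^{-1}$, where $P_j$ are the smooth spectral projectors onto the eigenspaces of $S^{-1}\tiA$ contained in $\ker \tiB$, $Q = I - \sum_j P_j$, and the coefficients $c_j(\xi)$ are chosen so that the symmetric part $(K \tiA)^s$ is positive-definite on $\ker \tiB$. On the complement $\ker\tiB^\perp$ positive-definiteness of $(K\tiA)^s + \tiB$ is then inherited from $\tiB$ itself, and by taking $c_j(\xi)$ small enough one ensures the cross terms do not destroy positivity.

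Finally, I would verify that the cycle closes: the constant multiplicity hypothesis is used only in $(b) \Rightarrow (c)$, where it guarantees that the spectral projectors $P_j(\xi)$ depend smoothly on $\xi$, so that $K \in C^\infty(\R;\R^{n\times n})$. The implications $(c)\Rightarrow(a)\Rightarrow(b)$ do not require it, making the equivalence complete under the stated hypotheses. I expect the bookkeeping in the explicit construction of $K$---ensuring the correct skew-symmetry of $KS$, the coercivity bound, and smoothness simultaneously---to be the most delicate step, but it proceeds exactly as in Humpherys \cite{Hu05}, Sections 3 and 6.
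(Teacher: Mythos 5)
The paper does not prove this statement: it is quoted directly from Humpherys (Theorems~3.3 and~6.3 in \cite{Hu05}), so there is no in-paper proof to compare against. Your overall strategy---reformulating in the symmetric triple $(S,\tiA,\tiB)$, proving $(a)\Rightarrow(b)$ by contrapositive, $(c)\Rightarrow(a)$ by a Lyapunov/energy pairing that exploits skew-symmetry of $KS$, and $(b)\Rightarrow(c)$ by a constructive spectral argument relying on constant multiplicity for smoothness of the projectors---is indeed the cycle Humpherys and, before him, Shizuta and Kawashima use, and the first two implications are sketched correctly.

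There is, however, a concrete flaw in your sketch of $(b)\Rightarrow(c)$. You define $K(\xi)=\sum_j c_j(\xi)(P_j\tiA Q - Q\tiA P_j)S^{-1}$ with $P_j$ ``the smooth spectral projectors onto the eigenspaces of $S^{-1}\tiA$ contained in $\ker\tiB$.'' But genuine coupling says precisely that no nonzero $V\in\ker\tiB$ can satisfy $(\varrho S + \tiA)V=0$, i.e.\ no eigenvector of $S^{-1}\tiA$ lies in $\ker\tiB$, so no eigenspace of $S^{-1}\tiA$ is contained in (or even intersects nontrivially) $\ker\tiB$. Your index set is therefore empty, your formula yields $K\equiv 0$, and the resulting $(K\tiA)^s + \tiB = \tiB$ is only positive semi-definite, failing condition~(b) of the definition of a compensating matrix. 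The correct Kawashima--Shizuta/Humpherys construction runs the other way round: one takes \emph{all} the eigenprojections $P_1,\dots,P_m$ of $S^{-1}\tiA$ and forms $K$ from the off-diagonal blocks of $\tiB$ in this spectral basis, e.g.\ $K\sim \sum_{j\neq k}\tfrac{1}{\nu_j-\nu_k}P_j^\top \tiB P_k$; genuine coupling is then the hypothesis that guarantees the off-diagonal blocks supply the missing coercivity on $\ker\tiB$, and constant multiplicity guarantees the $P_j$ (and hence $K$) depend smoothly on $\xi$. You correctly identify this step as the delicate one and defer to Humpherys for the details, but as written the formula cannot produce a valid $K$.
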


Our first observation is that the linearized QHD system \eqref{FouQHD2} satisfies the genuine coupling and constant multiplicity conditions.
\begin{lemma}
\label{lemgencoup}
The triplet $(S, \tiA, \tiB)$ is genuinely coupled. Moreover, the matrix symbol $\tiA(\xi)$ is of constant multiplicity in $\xi \in \R$.
\end{lemma}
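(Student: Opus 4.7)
The plan is to verify both claims directly from the explicit forms of $S(\xi)$, $\tiA(\xi)$ and $\tiB(\xi)$ displayed just before the lemma. The key observation that makes everything work is that the subsonicity assumption \eqref{subsonic}, together with the definition \eqref{defalpha}, yields $\alpha(\xi) \geq p'(\rho_*) - m_*^2/\rho_*^2 > 0$ strictly, uniformly in $\xi \in \R$. Both properties should then be immediate from this strict positivity combined with elementary computations; I do not expect any real obstacle.

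For genuine coupling, I would first identify the kernel of $\tiB(\xi)$ when $\xi \neq 0$. Since $\tiB(\xi) = \xi^2 \,\mathrm{diag}(0,\mu)$ with $\mu > 0$, the kernel is exactly $\mathrm{span}\{(1,0)^\top\}$, so any nonzero $V \in \ker\tiB(\xi)$ has the form $V = (v_1,0)^\top$ with $v_1 \neq 0$. A direct computation gives
\[
\big(\varrho S(\xi) + \tiA(\xi)\big) V = \begin{pmatrix} \varrho\,\alpha(\xi)\, v_1 \\ \alpha(\xi)\, v_1 \end{pmatrix}, \qquad \varrho \in \R.
\]
The second component equals $\alpha(\xi)\, v_1 \neq 0$ by the strict positivity of $\alpha(\xi)$ and the assumption $v_1 \neq 0$. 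Hence $(\varrho S(\xi) + \tiA(\xi))V \neq 0$ for every $\varrho \in \R$, which is precisely the genuine coupling condition.

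For constant multiplicity of $\tiA(\xi)$, I would compute its characteristic polynomial. Since $\tiA(\xi)$ is the symmetric $2\times 2$ matrix
\[
\tiA(\xi) = \begin{pmatrix} 0 & \alpha(\xi) \\ \alpha(\xi) & 2m_*/\rho_* \end{pmatrix},
\]
its eigenvalues are the roots of $\lambda^2 - (2m_*/\rho_*)\lambda - \alpha(\xi)^2 = 0$, namely
\[
\lambda_{\pm}(\xi) = \frac{m_*}{\rho_*} \pm \sqrt{\Big(\frac{m_*}{\rho_*}\Big)^{\!2} + \alpha(\xi)^2}.
\]
The radicand is strictly positive for every $\xi \in \R$ because $\alpha(\xi) > 0$, so $\lambda_+(\xi) \neq \lambda_-(\xi)$ throughout $\R$. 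Thus both eigenvalues are simple (and a fortiori semi-simple) with constant multiplicity one for all $\xi \in \R$, which verifies the constant multiplicity hypothesis needed to invoke the equivalence Theorem \ref{HuThSym}.
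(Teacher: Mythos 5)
Your proof is correct and follows essentially the same route as the paper: explicitly identifying $\ker\tiB(\xi) = \mathrm{span}\{(1,0)^\top\}$ and using $\alpha(\xi)>0$ (from subsonicity) to show the second component of $(\varrho S(\xi)+\tiA(\xi))V$ is nonzero, then computing the two eigenvalues $m_*/\rho_* \pm \sqrt{m_*^2/\rho_*^2+\alpha(\xi)^2}$ and observing they never coalesce. No gaps.
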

\begin{proof}
Clearly, for each $\xi \neq 0$ we have $\ker \tiB(\xi) = \{  (a, 0)^\top \, :  \, a \in \R \} \subset \R^2$. Hence, for any $0 \neq V = (a,0)^\top  \in \ker \tiB(\xi)$, with $\xi \neq 0$, and any $\varrho \in \R$, there holds
\[
\big( \varrho S(\xi) + \tiA(\xi) \big) V = \begin{pmatrix} \varrho \alpha(\xi) & \alpha(\xi) \\ \alpha(\xi) & \varrho + 2m_* / \rho_* \end{pmatrix} \begin{pmatrix} a \\ 0\end{pmatrix} = \begin{pmatrix} a \varrho \alpha(\xi) \\ a \alpha(\xi) \end{pmatrix} \neq 0,
\]
because $a \neq 0$ and $\alpha(\xi) > 0$ for all $\xi$. Therefore, the triplet $(S, \tiA, \tiB)$ is genuinely coupled. Upon an explicit computation of the eigenvalues of $\tiA(\xi)$, we obtain that $\det (\nu I - \tiA(\xi)) = 0$ if and only if
\[
\nu = \nu_\pm(\xi) = \frac{m_*}{\rho_*} \pm \sqrt{\frac{m_*^2}{\rho_*^2} + \alpha(\xi)^2},
\]
yielding two real and simple eigenvalues, $\nu_-(\xi) < 0 < \nu_+(\xi)$, which never coalesce inasmuch as $\alpha(\xi) > 0$ for all $\xi$. Hence, the constant multiplicity assumption is also fulfilled.
\end{proof}

\subsection{The compensating matrix symbol}

By virtue of the equivalence Theorem \ref{HuThSym} and Lemma \ref{lemgencoup}, we deduce the existence of a compensating matrix symbol for the triplet $(S, \tiA, \tiB)$ associated to the linear symmetric QHD system \eqref{sFouQHD}. In applications, however, it is more convenient to construct the symbol directly. For that purpose, let us go back to the (unsymmetrized) original system \eqref{FouQHD2} and introduce the following rescaling of variables,
\begin{equation}
\label{hV}
\hV := S(\xi)^{1/2} \hU,
\end{equation}
where $S(\xi) > 0$ is the symmetrizer from Lemma \ref{Sym2Full}. Hence, system \eqref{FouQHD2} transforms into
\begin{equation}
\label{eqforV}
\hV_{t} + \big( i \xi \hA(\xi) + \hB(\xi) \big) \hV = 0, 
\end{equation}
where
\[
\hA(\xi) := S(\xi)^{1/2} A(\xi) S(\xi)^{-1/2},\qquad \hB(\xi) := S(\xi)^{1/2} B(\xi) S(\xi)^{-1/2}.
\]
Thus, direct computations yield
\[
\hA(\xi) = \begin{pmatrix} 0 & \alpha(\xi)^{1/2} \\ \alpha(\xi)^{1/2} & 2m_* / \rho_* \end{pmatrix}, \qquad \hB(\xi) = \xi^2 \begin{pmatrix} 0 & 0 \\ 0 & \mu \end{pmatrix} = \xi^2 B_*.
\]
Notice that $\hA$ and $\hB$ are both smooth and symmetric, with $\hB \geq 0$. In some sense, we have just symmetrized the original system with $S(\xi)^{1/2}$ instead. The following lemma appropriately chooses the compensating matrix function and provides more information than the equivalence theorem.

\begin{lemma}
\label{lemourK}
There exists a smooth compensating matrix symbol, $\hK \in C^\infty(\R; \R^{2 \times 2})$, $\hK = \hK(\xi)$, for the triplet $(I, \hA(\xi), B_*)$. In other words, $\hK$ is skew-symmetric and 
\begin{equation}
\label{compmatprop}
[\hK(\xi) \hA(\xi)]^s + B_* \geq {\theta} I > 0, 
\end{equation}
for some uniform constant ${\theta} > 0$ independent of $\xi \in \R$. In addition, the following estimates hold,
\begin{equation}
\label{tiKbded}
|\xi \hK(\xi)|, | \hK(\xi) | \leq C,
\end{equation}
for all $\xi \in \R$ and some uniform constant $C>0$. 
\end{lemma}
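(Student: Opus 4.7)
The plan is to use the ansatz
\[
\hK(\xi) = \begin{pmatrix} 0 & k(\xi) \\ -k(\xi) & 0 \end{pmatrix},
\]
which is automatically skew-symmetric, so the requirement (a) (with $S = I$) is free. A direct computation yields
\[
[\hK(\xi)\hA(\xi)]^s + B_* = \begin{pmatrix} k\alpha^{1/2} & k m_*/\rho_* \\ k m_*/\rho_* & \mu - k\alpha^{1/2} \end{pmatrix}.
\]
The decisive observation is that the trace of this matrix is exactly $\mu$, independent of $k$ and $\xi$. Hence the two eigenvalues always sum to $\mu$, and establishing $\geq \theta I$ reduces to giving a uniform positive lower bound on the determinant, using the elementary fact that for a symmetric $2\times 2$ matrix with trace $\mu$ and determinant $D > 0$ one has $\lambda_{\min} \geq 2D/\mu$.

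Next I would choose
\[
k(\xi) := \frac{c\,\alpha(\xi)^{1/2}}{\alpha(\xi) + m_*^2/\rho_*^2}, \qquad c \in (0,\mu) \text{ fixed and sufficiently small.}
\]
With this choice the determinant collapses cleanly to
\[
\det\!\big([\hK\hA]^s + B_*\big) = k\mu\alpha^{1/2} - k^2\big(\alpha + m_*^2/\rho_*^2\big) = c(\mu - c)\,\frac{\alpha(\xi)}{\alpha(\xi) + m_*^2/\rho_*^2}.
\]
The denominator equals $p'(\rho_*) + \tfrac12 k^2\xi^2$, which is smooth and bounded below by $p'(\rho_*) > 0$. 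The numerator $\alpha(\xi)$ is nondecreasing in $|\xi|$ with minimum $\alpha(0) = p'(\rho_*) - m_*^2/\rho_*^2$, which is strictly positive precisely by the subsonicity assumption \eqref{subsonic}. Therefore
\[
\frac{\alpha(\xi)}{\alpha(\xi) + m_*^2/\rho_*^2} \geq \frac{p'(\rho_*) - m_*^2/\rho_*^2}{p'(\rho_*)} > 0,
\]
uniformly in $\xi$, yielding the required uniform bound $[\hK(\xi)\hA(\xi)]^s + B_* \geq \theta I$ with $\theta = 2c(\mu-c)(p'(\rho_*) - m_*^2/\rho_*^2)/(\mu\, p'(\rho_*))$. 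Smoothness of $\hK$ on $\R$ follows because $\alpha$ is smooth and strictly positive, so $\alpha^{1/2}$ is smooth.

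The bounds \eqref{tiKbded} are then read off from the asymptotic behavior of $k$. As $|\xi| \to \infty$, $\alpha(\xi) \sim \tfrac12 k^2\xi^2$, so $k(\xi) = O(|\xi|^{-1})$ and $\xi k(\xi) = O(1)$; near $\xi=0$, $k$ is continuous and $\xi k(\xi)$ vanishes, so both $|\hK(\xi)|$ and $|\xi\hK(\xi)|$ are bounded uniformly. The main conceptual obstacle is locating the right ansatz: the degeneracy of $B_*$ forces $\hK$ to supply the missing positivity in the $(1,1)$-slot of $[\hK\hA]^s$ through its off-diagonal entries, while the $(2,2)$-slot must remain below $\mu$. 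The ansatz above is the simplest skew-symmetric form allowing both effects, and the trace identity makes the verification collapse to a one-parameter scalar inequality. Subsonicity enters crucially: it is precisely $\alpha(0) > 0$ that prevents the determinant lower bound from degenerating at $\xi = 0$, where the dispersive contribution $\tfrac12 k^2\xi^2$ inside $\alpha$ vanishes.
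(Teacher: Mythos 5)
Your proof is correct in substance and follows essentially the same approach as the paper: the identical skew-symmetric ansatz $\hK(\xi) = k(\xi)\bigl(\begin{smallmatrix}0&1\\-1&0\end{smallmatrix}\bigr)$, with the subsonicity condition $\alpha(0) = p'(\rho_*) - m_*^2/\rho_*^2 > 0$ doing the work of bounding $\alpha(\xi)$ uniformly away from zero. Where you diverge is the specific scalar factor, $k(\xi) = c\,\alpha^{1/2}/(\alpha + m_*^2/\rho_*^2)$ versus the paper's $k(\xi) = \epsilon_*\alpha^{-1/2}$, and the verification device: the paper completes the square on the quadratic form, whereas you exploit the trace--determinant structure, which is a slightly slicker way to collapse the positivity check to a scalar inequality. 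Both choices give the same $O(|\xi|^{-1})$ decay of $k(\xi)$, hence the uniform bounds on $|\hK|$ and $|\xi\hK|$.

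One small arithmetic slip: for a symmetric $2\times 2$ matrix with trace $\mu > 0$ and determinant $D > 0$, the sharp elementary bound is $\lambda_{\min} = D/\lambda_{\max} \geq D/\mu$, not $2D/\mu$. (To see that $2D/\mu$ fails, take $\lambda_{\min} = \varepsilon$, $\lambda_{\max} = \mu - \varepsilon$: then $2D/\mu \approx 2\varepsilon > \varepsilon = \lambda_{\min}$.) This only changes your constant $\theta$ by a factor of $2$ and does not affect the validity of the conclusion.
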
 
\begin{proof}
Let us proceed by inspection. Consider a compensating matrix symbol of the form
\[
\hK(\xi) = \epsilon q(\xi) \begin{pmatrix} 0 & 1 \\ -1 & 0 \end{pmatrix},
\]
with $\epsilon > 0$ constant and $q(\xi) > 0$ real and smooth, both to be chosen later. Clearly, $\hK$ is skew-symmetric. Let us compute
\[
\hK(\xi) \hA(\xi) = \epsilon q(\xi) \begin{pmatrix} 0 & 1 \\ -1 & 0 \end{pmatrix} \begin{pmatrix} 0 & \alpha(\xi)^{1/2} \\ \alpha(\xi)^{1/2} & 2m_* / \rho_* \end{pmatrix} = \epsilon q(\xi) \begin{pmatrix} \alpha(\xi)^{1/2} & 2m_* / \rho_* \\ 0 & -\alpha(\xi)^{1/2} \end{pmatrix}.
\]
The symmetric part of this matrix is
\[
[\hK(\xi) \hA(\xi)]^s = \epsilon q(\xi) \begin{pmatrix} \alpha(\xi)^{1/2} & m_* / \rho_* \\m_* / \rho_* & -\alpha(\xi)^{1/2} \end{pmatrix}.
\]
Hence, let us choose $q(\xi) \equiv \alpha(\xi)^{-1/2}$, real smooth and positive, to obtain for every $y = (y_1, y_2) \in \R^2$ and all $\xi \in \R$ the quadratic form
\[
\begin{aligned}
Q(y,\xi) &= \begin{pmatrix} y_1 \\ y_2   \end{pmatrix}^{\top} \! \big( [\hK(\xi) \hA(\xi)]^s + B_* \big) \begin{pmatrix} y_1 \\ y_2   \end{pmatrix} \\
&=\begin{pmatrix} y_1 \\ y_2   \end{pmatrix}^{\top} \begin{pmatrix} \epsilon & \epsilon \alpha(\xi)^{-1/2} m_* / \rho_* \\ \epsilon \alpha(\xi)^{-1/2} m_* / \rho_* & \mu - \epsilon \end{pmatrix} \begin{pmatrix} y_1 \\ y_2   \end{pmatrix}\\
&= a_1 y_1^2 + b_{12} y_1 y_2 + a_2 y_2^2,
\end{aligned}
\]
with
\[
a_1 = \epsilon > 0, \quad a_2 = \mu -\epsilon, \quad b_{12} = \frac{2 \epsilon m_*}{\alpha(\xi)^{1/2} \rho_*}.  
\]
If we choose $0 < \epsilon \ll 1$ sufficiently small such that
\[
a_2 > 0, \qquad a_2 - \frac{b_{12}^2}{2a_1} > 0,
\]
then clearly,
\[
\begin{aligned}
Q(y,\xi) &= \tfrac{1}{2} a_1 y_1^2 + \tfrac{1}{2} a_1 \Big( y_1 + \frac{b_{12}}{a_2} y_2 \Big)^2 + \Big( a_2 - \frac{b_{12}^2}{2a_1} \Big) y_2^2\\
&\geq \tfrac{1}{2} a_1 y_1^2 + \Big( a_2 - \frac{b_{12}^2}{2a_1} \Big) y_2^2\\
&\geq \theta |y|^2,
\end{aligned}
\]
with $\theta = \min \{ \tfrac{1}{2} a_1, a_2 - b_{12}^2/(2a_1) \} > 0$. That is, the quadratic form is positive. Hence, we need to choose $0 < \epsilon \ll 1$ such that $\mu > \epsilon$ and
\[
a_2 - \frac{b_{12}^2}{2a_1} = \mu - \Big( 1 + \frac{2 m_*^2}{\rho_*^2 \alpha(\xi)}\Big) \epsilon > 0.
\]
Notice, however, that $\alpha(\xi) \geq \alpha_* > 0$ for all $\xi \in \R$ with
\[
\alpha_* := p'(\rho_*) - \frac{m_*^2}{\rho_*^2} > 0,
\]
which is a positive constant because of the subsonicity condition \eqref{subsonic}. Therefore,
\[
a_2 - \frac{b_{12}^2}{2a_1} \geq \mu - \Big( 1 + \frac{2 m_*^2}{\rho_*^2 \alpha_*}\Big) \epsilon > 0,
\]
for all $\xi$ and it suffices to choose
\[
\epsilon = \epsilon_* := \tfrac{1}{2} \frac{\mu \alpha_* \rho_*^2}{\alpha_* \rho_*^2 + 2 m_*^2} > 0,
\]
in order to obtain $Q(y,\xi) \geq \theta |y|^2$ for all $\xi \in \R$ and all $y \in \R^2$ with a constant
\[
\theta = \min \Big\{ \tfrac{1}{2} \epsilon_*, \mu - \epsilon_* \Big( 1 + \frac{2m_*^2}{\alpha_* \rho_*}\Big) \Big\} > 0,
\]
independent of $\xi$. This shows \eqref{compmatprop}. Therefore,
\[
\hK(\xi) = \frac{\epsilon_*}{\alpha(\xi)^{1/2}}\begin{pmatrix} 0 & 1 \\ -1 & 0 \end{pmatrix},
\]
is the compensating symbol we look for. Clearly, $\hK$ is smooth in $\xi$. Finally, since $0 < \alpha(\xi)^{-1/2} \leq \alpha_*^{-1/2}$ and since $|\xi| \alpha(\xi)^{-1/2}$ is uniformly bounded for all $\xi \in \R$, we conclude that there exists a uniform constant $C > 0$ such that \eqref{tiKbded} holds. The lemma is proved.
\end{proof}

\begin{remark}
\label{remgoodK}
A few comments are in order. Notice that we demand $\hK$ to be a compensating matrix symbol for the triplet $(I, \hA, B_*)$ and not for $(I, \hA, \hB)$. This feature will be useful in the establishment of the energy estimate. In addition, we have constructed $\hK$ such that the constant $\theta > 0$ in \eqref{compmatprop} can be chosen uniformly in $\xi \in \R$ and that both $|\hK(\xi)|$ and $|\xi \hK(\xi)|$ are uniformly bounded above. These are properties that cannot be deduced from the equivalence theorem.
\end{remark}

\subsection{Linear decay of the associated semigroup}

\begin{lemma}[basic pointwise estimate]
\label{lembee}
The solutions $\hV = \hV(\xi,t)$ to the linear system \eqref{eqforV} satisfy the pointwise estimate
\begin{equation}
\label{bestV}
|\hV(\xi,t)| \leq C \exp (- \omega_0 \xi^2 t) |\hV(\xi,0)|,
\end{equation}
for all $\xi \in \R$, $t \geq 0$ and some uniform constants $C,\omega_0 > 0$.
\end{lemma}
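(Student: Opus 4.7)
\medskip

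\noindent\textbf{Plan.} The natural approach is to build a Lyapunov functional in Fourier space using the compensating symbol $\hK$ from Lemma \ref{lemourK} and then apply Gronwall. For a small parameter $\kappa>0$ to be chosen later, define
\[
\cE(\xi,t) \,:=\, |\hV(\xi,t)|^2 \,+\, \kappa \, \Im\langle \xi\hK(\xi)\hV,\hV\rangle.
\]
Since $\hK$ is real skew-symmetric, $\langle \hK\hV,\hV\rangle$ is purely imaginary so the correction is real. The uniform bound $|\xi\hK(\xi)|\leq C$ from \eqref{tiKbded} gives $|\cE - |\hV|^2| \leq \kappa C |\hV|^2$, so for $\kappa$ small enough (uniformly in $\xi$) one has $\cE(\xi,t) \sim |\hV(\xi,t)|^2$.

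Next, I would differentiate $\cE$ along solutions of \eqref{eqforV}. Using the symmetry of $\hA$ and $\hB = \xi^2 B_* \ge 0$, the basic part gives
\[
\tfrac{d}{dt}|\hV|^2 = -2\langle \hB \hV,\hV\rangle = -2\xi^2\langle B_* \hV,\hV\rangle,
\]
which only controls the $m$-component. Using $\hV_t = -(i\xi\hA+\hB)\hV$ together with $\hK^\top = -\hK$, $\hA^\top = \hA$, a direct calculation gives
\[
\tfrac{d}{dt}\Im\langle \xi \hK \hV,\hV\rangle = -2\xi^2 \langle [\hK\hA]^s \hV,\hV\rangle + R(\xi,t),
\]
where $R$ collects terms of the form $\xi\Im\langle\hK\hB\hV,\hV\rangle$ and $\xi\Im\langle\hB\hK\hV,\hV\rangle$. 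Adding and rearranging, the principal contribution reads
\[
-2\xi^2\langle B_*\hV,\hV\rangle - 2\kappa\xi^2\langle [\hK\hA]^s\hV,\hV\rangle
\,=\, -2\kappa\xi^2\langle ([\hK\hA]^s+B_*)\hV,\hV\rangle - 2(1-\kappa)\xi^2 \langle B_*\hV,\hV\rangle,
\]
which, by the key inequality \eqref{compmatprop}, is bounded above by $-2\kappa\theta\xi^2 |\hV|^2$ for $\kappa<1$.

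The remaining step is to absorb $R$. Since $\hB = \xi^2 B_*$ and $|B_*\hV|^2 = \mu\langle B_*\hV,\hV\rangle$, using the uniform bound $|\xi\hK|\leq C$ and Cauchy--Schwarz--Young yields
\[
|\kappa R(\xi,t)| \,\leq\, \kappa\delta\,\xi^2\langle B_*\hV,\hV\rangle + \kappa C(\delta)\,\xi^2|\hV|^2,
\]
where $\delta>0$ is an arbitrary constant. Fix $\delta$ small so the first term is absorbed into $2(1-\kappa)\xi^2\langle B_*\hV,\hV\rangle$; then choose $\kappa>0$ sufficiently small so that $\kappa C(\delta) < \kappa\theta$ (possible once $\delta$ is fixed independent of $\kappa$). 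The outcome is
\[
\tfrac{d}{dt}\cE(\xi,t) \,\leq\, -c_0\,\xi^2 |\hV(\xi,t)|^2 \,\leq\, -\omega_0\,\xi^2 \cE(\xi,t),
\]
for some uniform constants $c_0,\omega_0>0$. Gronwall's inequality together with the equivalence $\cE\sim|\hV|^2$ produces the pointwise bound \eqref{bestV}.

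\medskip

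\noindent\textbf{Main obstacle.} The bookkeeping of powers of $\xi$ is the delicate point: the dispersive term $\xi^3 C_*$ is hidden inside $\hA(\xi)$, and a priori the cross terms in $R$ could carry an unfavourable power of $\xi$ at high frequencies. What makes the argument close is precisely the pair of uniform bounds $|\hK(\xi)|, |\xi\hK(\xi)|\leq C$ obtained in Lemma \ref{lemourK} (see Remark \ref{remgoodK}), which confine every error term to something absorbable into $\xi^2|\hV|^2$, and the fact that $[\hK\hA]^s + B_*$ is uniformly positive definite in $\xi$, not merely for each $\xi\neq 0$.
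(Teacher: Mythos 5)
Your approach is essentially identical to the paper's: build the Lyapunov energy $\cE = |\hV|^2 \pm \kappa\,\xi\langle \hV, i\hK\hV\rangle$, use the symmetry of $\hA$ and positivity of $\hB$ for the basic identity, apply the compensating inequality \eqref{compmatprop}, absorb the $\hK$--$\hB$ cross terms with Young's inequality and the uniform bounds \eqref{tiKbded}, and close with Gronwall. The identification of the two key technical facts (uniform boundedness of $\hK$ and $\xi\hK$, and $\xi$-uniform positivity of $[\hK\hA]^s + B_*$) as what makes this work is exactly right.

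There is, however, a bookkeeping slip in the final parameter choice. You bound $|\kappa R|\leq\kappa\delta\,\xi^2\langle B_*\hV,\hV\rangle+\kappa C(\delta)\,\xi^2|\hV|^2$ and then say ``fix $\delta$ small'' and afterwards choose $\kappa$ so that $\kappa C(\delta)<\kappa\theta$. The latter inequality is independent of $\kappa$ (it cancels), so it reduces to $C(\delta)<\theta$, which constrains $\delta$, not $\kappa$; and since $C(\delta)\sim\delta^{-1}$, it forces $\delta$ to be \emph{large}, contradicting ``fix $\delta$ small.'' The correct order (matching the paper's) is: first fix the Young parameter so that the coefficient of $\xi^2|\hV|^2$ in the error is strictly below $\theta$ (e.g.\ equal to $\theta/2$), accepting whatever large constant $C_\theta$ this produces on $\langle B_*\hV,\hV\rangle$; then choose $\kappa$ small enough that $\kappa C_\theta<1$, so the $\langle B_*\hV,\hV\rangle$ error is absorbed by the untouched dissipation $-2\xi^2\langle B_*\hV,\hV\rangle$ (the coefficient of which, unlike the $|\hV|^2$-gain, is not multiplied by $\kappa$). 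After this reordering the argument closes exactly as you intend.
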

\begin{proof}
The proof follows that of Lemma 5.2 in \cite{PlV22} almost word by word and therefore we gloss over some of the details. The important points in the present case are the following. By taking the standard product in $\C^2$ and for any $\delta > 0$ sufficiently small, the skew-symmetry of the matrix $\hK$ allows us to define an energy of the form
\[
\cE = |\hV|^2 - \delta \xi \langle \hV, i \hK(\xi) \hV \rangle,
\]
which is real, positive and equivalent to $| \hV |^2$, that is, $C_1^{-1} |\hV|^2 \leq \cE \leq C_1 |\hV|^2$ for some uniform $C_1 > 0$. Since $\hB(\xi) = \xi^2 B_*$, the inner product in $\C^2$ of $\hV$ with equation \eqref{eqforV} yields
\begin{equation}
\label{la8}
\tfrac{1}{2} \partial_t |\hV|^2 + \xi^2 \langle \hV, B_* \hV \rangle = 0,
\end{equation}
where we have used the fact that $\hA$ and $B_*$ are symmetric. Likewise, multiplying the equation by by $- i \xi \hK(\xi)$ one arrives at the estimate
\begin{equation}
\label{la10}
- \tfrac{1}{2} \xi \partial_t \langle \hV, i \hK(\xi) \hV \rangle + \xi^2 \langle \hV, [\hK(\xi) \hA(\xi)]^s \hV \rangle \leq \e \xi^2 |\hV|^2 + C_\e \xi^2 \langle \hV, B_* \hV \rangle,
\end{equation}
for any $\e > 0$ and some uniform $C_\e > 0$. Multiply inequality \eqref{la10} by $\delta > 0$ and add it to equation \eqref{la8}, yielding \begin{equation}
\label{la11}
\begin{aligned}
\tfrac{1}{2} \partial_t \big( |\hV|^2 - \delta \xi \langle \hV, i \hK(\xi) \hV \rangle \big) + \xi^2 \big( \delta \langle \hV, [\hK(\xi) \hA(\xi)]^s \hV \rangle &+ (1-\delta C_\e)  \langle \hV, B_* \hV \rangle \big) \\&\leq \e \delta \xi^2 |\hV|^2.
\end{aligned}
\end{equation}
If we choose $\e = \tfrac{1}{2} \theta$ where $\theta > 0$ is the uniform constant in \eqref{compmatprop} then the constant $C_\e > 0$ is therefore fixed and for $0 < \delta \ll 1$ sufficiently small one obtains
\[
\delta \langle \hV, [\hK(\xi) \hA(\xi)]^s \hV \rangle + (1-\delta C_\e)\langle\hV, \tiB \hV \rangle \geq \delta \langle \hV, ([\hK(\xi) \hA(\xi)]^s + B_*) \hV \rangle 
\geq \delta \theta |\hV|^2,
\] 
where we have used the main property of the compensating matrix symbol (estimate \eqref{compmatprop}). Substitution into \eqref{la11} yields
\[
\partial_t \cE + \omega_0 \xi^2 \cE \leq 0,
\]
where $\omega_0 := \delta \theta / C_1 > 0$. This implies estimate \eqref{bestV}. Details are left to the reader.
\end{proof}
\begin{remark}
It is to be noticed that \eqref{bestV} implies that the eigenvalues in Fourier space of system \eqref{spect} satisfy 
$\lambda(\xi) \leq  - \omega_0 \xi^2$, with $\omega_0 > 0$, yielding a dissipative structure of regularity-gain type.
\end{remark}

The pointwise estimate of Lemma \ref{lembee} implies the following estimate for the solutions to \eqref{FouQHD2}.

\begin{corollary}
\label{corlindecayW}
The solutions $\hU(\xi,t) = (\hU_1, \hU_2)(\xi,t)$ to the linear system \eqref{FouQHD2} satisfy the estimate 
\begin{equation}
\label{beeU}
\begin{aligned}
(1+\xi^{2})| \hU_{1}(\xi, t) |^{2} + &| \hU_{2}(\xi, t) |^{2}  \leq  C \exp(- 2\omega_0 \xi^{2} t ) \big( (1+\xi^{2})| \hU_{1}(\xi, 0) |^{2} + | \hU_{2}(\xi, 0) |^{2}\big),  
\end{aligned}
\end{equation}
for all $t\geq 0$, $\xi \in \R $ and some uniform constant $C>0$.
\end{corollary}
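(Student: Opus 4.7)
The plan is to deduce the estimate for $\hU$ directly by inverting the change of variables $\hV = S(\xi)^{1/2} \hU$ and applying the pointwise estimate of Lemma~\ref{lembee}.

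First, I would observe that since $S(\xi) = \mathrm{diag}(\alpha(\xi), 1)$ with $\alpha(\xi) = p'(\rho_*) - m_*^2/\rho_*^2 + \tfrac{1}{2} k^2 \xi^2$, the rescaling reads $\hV_1 = \alpha(\xi)^{1/2} \hU_1$ and $\hV_2 = \hU_2$, so
\begin{equation*}
|\hV(\xi,t)|^2 = \alpha(\xi) |\hU_1(\xi,t)|^2 + |\hU_2(\xi,t)|^2.
\end{equation*}

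The key step is to note that, thanks to the subsonicity condition \eqref{subsonic}, the constant $\alpha_* := p'(\rho_*) - m_*^2/\rho_*^2 > 0$ is strictly positive. Therefore
\begin{equation*}
\alpha(\xi) = \alpha_* + \tfrac{1}{2} k^2 \xi^2,
\end{equation*}
and one gets the two-sided bound $c_1 (1+\xi^2) \leq \alpha(\xi) \leq c_2(1+\xi^2)$ for all $\xi \in \R$, with $c_1 := \min\{\alpha_*, \tfrac{1}{2} k^2\} > 0$ and $c_2 := \max\{\alpha_*, \tfrac{1}{2} k^2\} > 0$. This yields the equivalence
\begin{equation*}
c_1 \big( (1+\xi^2) |\hU_1|^2 + |\hU_2|^2\big) \leq |\hV|^2 \leq c_2 \big( (1+\xi^2) |\hU_1|^2 + |\hU_2|^2\big),
\end{equation*}
uniformly in $\xi \in \R$.

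Finally, I would square the pointwise estimate \eqref{bestV} from Lemma~\ref{lembee}, obtaining $|\hV(\xi,t)|^2 \leq C^2 \exp(-2\omega_0 \xi^2 t)\,|\hV(\xi,0)|^2$, and combine it with the equivalence above (used once from below at time $t$, once from above at time $0$) to conclude
\begin{equation*}
(1+\xi^2) |\hU_1(\xi,t)|^2 + |\hU_2(\xi,t)|^2 \leq \frac{c_2 C^2}{c_1} \exp(-2\omega_0 \xi^2 t) \big( (1+\xi^2)|\hU_1(\xi,0)|^2 + |\hU_2(\xi,0)|^2 \big),
\end{equation*}
which is the desired estimate after absorbing constants. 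No step looks like a serious obstacle here; the entire content of the corollary is the observation that $\alpha(\xi)$ grows exactly like $1 + \xi^2$ (and is bounded below away from zero thanks to subsonicity), so that Lemma~\ref{lembee} translates directly into a weighted estimate in the original variables.
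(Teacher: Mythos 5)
Your proof is correct and follows essentially the same route as the paper: rewrite $|\hV|^2 = \alpha(\xi)|\hU_1|^2 + |\hU_2|^2$, observe that $\alpha(\xi)\asymp 1+\xi^2$ uniformly (using subsonicity to make $\alpha_*>0$), and then transfer Lemma \ref{lembee} to the $\hU$ variables. The only difference is that you make the comparison constants $c_1,c_2$ explicit, which the paper leaves implicit.
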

\begin{proof}
Suppose $\hU = \hU(\xi,t)$ is a solution to system \eqref{FouQHD2} . Then from transformation \eqref{hV} we know that $\hV = S(\xi)^{1/2} \hU$ satisfies \eqref{eqforV} and, therefore, Lemma \ref{lembee} applies. Hence, from estimate \eqref{bestV} we obtain
\[
\begin{aligned}
|\hV|^2 = | S(\xi)^{1/2} \hU |^2 &= \left| \begin{pmatrix}
\alpha(\xi)^{1/2} & 0 \\ 0 & 1 
\end{pmatrix} \begin{pmatrix} \hU_1 \\ \hU_2\end{pmatrix} \right|^2 = \alpha(\xi) |\hU_1|^2 +  |\hU_2|^2 \\
&\leq C \exp (- 2\omega_0 \xi^2 t) |\hV(\xi,0)|^2\\
&= C \exp (- 2\omega_0 \xi^2 t) \big( \alpha(\xi) |\hU_1(\xi,0)|^2 +  |\hU_2(\xi,0)|^2\big).
\end{aligned}
\]
From the definition of $\alpha(\xi)$ (see \eqref{defalpha}) we clearly deduce that there exist constants $C_j > 0$ such that $C_2 (1+\xi^2) \leq \alpha(\xi) \leq C_1(1+\xi^2)$ for all $\xi \in \R$. Upon substitution we obtain estimate \eqref{beeU}.
\end{proof}

The decay estimates \eqref{beeU} of the solutions to the evolution equation in Fourier space  \eqref{FouQHD2} readily imply the decay of the semigroup associated to the linear evolution system \eqref{linearQHD}. Notice the higher regularity on the density variable (here $U_1 = \rho$) that appears in \eqref{beeU}. Consider the abstract Cauchy problem for the linear system \eqref{linearQHD},
\begin{equation}
\label{Cauchylin}
\left\{
\begin{aligned}
		U_t &= \cA U,\\
		U(0) &= f,
\end{aligned}
\right.
\end{equation}
where $\cA := -A_* \partial_x + B_* \partial_x^2 + C_* \partial_x^3$ is a differential operator with constant coefficients. We densely define the operator on the space $Z := L^2(\R) \times L^2(\R)$ with domain $D(\cA) = H^{s+1}(\R) \times H^s(\R)$ for some $s \geq 3$.

\begin{lemma}
\label{lemsg}
The differential operator $\cA : Z \to Z$ is the infinitesimal generator of a $C_0$-semigroup, $\{ e^{t\cA} \}_{t\geq 0}$, in $Z = L^2(\R) \times L^2(\R)$. Moreover, for any $f \in \big( H^{s+1}(\R) \times H^s(\R) \big) \cap \big( L^1(\R) \times L^1(\R) \big)$, $s \geq 2$, and all $0 \leq \ell \leq s$, $t > 0$, there holds the estimate
\begin{equation}
\label{linestsg}
\begin{aligned}
\Big( \| \partial_x^{\ell} (e^{t \cA} f)_1(t) \|_1^2 + \| \partial_x^{\ell} (e^{t \cA} f)_2(t) \|_0^2 \Big)^{1/2} &\leq C e^{-c_1t} \Big( \| \partial_x^{\ell } f_1 \|_1^2 + \| \partial_x^{\ell} f_2 \|_0^2 \Big)^{1/2}  +\\ & \;\; + C (1+t)^{-(\ell/2 + 1/4)} \| f \|_{L^1},
\end{aligned}
\end{equation}
for some uniform constants $C, c_1 > 0$.
\end{lemma}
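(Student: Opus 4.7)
The plan is to reduce both assertions to the pointwise Fourier estimate of Corollary~\ref{corlindecayW} via Plancherel's theorem and a standard low/high frequency decomposition. Since $\cA$ has constant coefficients, applying the Fourier transform to \eqref{Cauchylin} yields $\hU(\xi,t) = e^{-tP(\xi)} \hat f(\xi)$ with symbol $P(\xi) := i\xi A_* + \xi^2 B_* + i\xi^3 C_*$. Lemma~\ref{lembee} implies that $| e^{-tP(\xi)} | \leq C$ uniformly in $\xi\in\R$ and $t\geq 0$, so the map $f \mapsto \cF^{-1}(e^{-tP(\xi)} \hat f)$ defines a uniformly bounded family on $Z = L^2(\R)^2$. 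Strong continuity at $t=0$ follows from dominated convergence in $L^2_\xi$ applied to $\hat f - e^{-tP(\xi)}\hat f$, using the uniform bound and pointwise convergence. The semigroup property is immediate from the multiplier description, so $\cA$ generates a $C_0$-semigroup on $Z$.

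For the decay estimate, Plancherel gives
\[
\| \partial_x^{\ell} (e^{t \cA} f)_1 \|_1^2 + \| \partial_x^{\ell} (e^{t \cA} f)_2 \|_0^2
= \int_\R \xi^{2\ell}\Big( (1+\xi^2)|\hU_1(\xi,t)|^2 + |\hU_2(\xi,t)|^2\Big)\, d\xi,
\]
and the pointwise bound \eqref{beeU} from Corollary~\ref{corlindecayW} dominates the integrand by $C \xi^{2\ell} e^{-2\omega_0 \xi^2 t}\big( (1+\xi^2)|\hat f_1(\xi)|^2 + |\hat f_2(\xi)|^2 \big)$. Split $\R = \{|\xi| \leq 1\} \cup \{|\xi| \geq 1\}$. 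On the high-frequency region use $e^{-2\omega_0 \xi^2 t} \leq e^{-2\omega_0 t}$, which yields the exponentially decaying contribution $C e^{-2\omega_0 t}\big(\|\partial_x^{\ell} f_1\|_1^2 + \|\partial_x^{\ell} f_2\|_0^2\big)$. On the low-frequency region use $(1+\xi^2)\leq 2$ and the Hausdorff--Young bound $\|\hat f\|_{L^\infty} \leq \|f\|_{L^1}$, which yields a contribution bounded by $C \|f\|_{L^1}^2 \int_{|\xi|\leq 1} \xi^{2\ell} e^{-2\omega_0 \xi^2 t}\, d\xi$.

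The remaining ingredient is the one-dimensional integral estimate
\[
\int_{|\xi|\leq 1} \xi^{2\ell} e^{-2\omega_0 \xi^2 t}\, d\xi \leq C (1+t)^{-(\ell + 1/2)}, \qquad t\geq 0,
\]
which is standard: for $0\leq t \leq 1$ the integral is bounded by a constant, while for $t\geq 1$ the change of variables $y = \xi\sqrt{t}$ gives $t^{-(\ell+1/2)}\int_{|y|\leq \sqrt t} y^{2\ell} e^{-2\omega_0 y^2}\,dy \leq C t^{-(\ell+1/2)}$, and the two regimes combine into $(1+t)^{-(\ell+1/2)}$. Collecting both contributions and extracting the square root (using $\sqrt{a+b}\leq \sqrt{a}+\sqrt{b}$) produces the bound \eqref{linestsg} with $c_1 = \omega_0$.

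The only nontrivial step is the low-frequency integral computation, which is routine; everything else is bookkeeping around the pointwise Fourier bound already proved in Corollary~\ref{corlindecayW}. The semigroup generation statement is the softest part and could alternatively be obtained from the Hille--Yosida theorem, but the Fourier-multiplier route above is the most direct given what has been established.
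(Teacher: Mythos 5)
Your proof of the decay estimate \eqref{linestsg} is correct and takes essentially the same route as the paper: pass to Fourier variables, apply the pointwise bound \eqref{beeU} from Corollary \ref{corlindecayW}, use Plancherel together with the identity $\|\partial_x^\ell u\|_1^2 = \int_\R \xi^{2\ell}(1+\xi^2)|\hat u|^2\,d\xi$, split into $|\xi|\leq 1$ and $|\xi|\geq 1$, apply Hausdorff--Young on the low-frequency piece and a crude exponential bound on the high-frequency piece. You prove the low-frequency Gaussian integral estimate $\int_{|\xi|\leq 1}\xi^{2\ell}e^{-2\omega_0\xi^2 t}\,d\xi \leq C(1+t)^{-(\ell+1/2)}$ directly by the change of variables $y = \xi\sqrt t$, whereas the paper cites it from Lemma A.1 of \cite{PlV22}; this is harmless. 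The discrepancy in the exponential rate ($c_1 = \omega_0$ in your version vs. $c_1 = \omega_0/2$ in the paper) is just bookkeeping in the high-frequency region and your version is marginally sharper.

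One inaccuracy appears in your treatment of the $C_0$-semigroup generation. You assert that Lemma \ref{lembee} gives $|e^{-tP(\xi)}|\leq C$ uniformly in $\xi\in\R$ and $t\geq 0$. That lemma, however, is a statement about $\hV = S(\xi)^{1/2}\hU$: it shows that the \emph{conjugated} propagator $S(\xi)^{1/2}e^{-tP(\xi)}S(\xi)^{-1/2}$ is uniformly bounded. Since $|S(\xi)^{1/2}| = \alpha(\xi)^{1/2} \sim |\xi|$ is unbounded, this does not directly transfer to a uniform bound on $|e^{-tP(\xi)}|$ itself, and indeed \eqref{beeU} carries a $(1+\xi^2)$ weight on $|\hU_1|$ precisely for this reason. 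So the uniform $L^2\times L^2$ multiplier bound you invoke is not established by the cited ingredients. That said, the paper itself is equally terse here (``follows from standard Fourier estimates and semigroup theory $\ldots$ we omit the details''), and what is actually used in Section \ref{secglobal} is only the decay estimate \eqref{linestsg}, which you prove correctly. If you wanted to close this point rigorously, a cleaner framing would be to pose the semigroup on the weighted space $H^1(\R)\times L^2(\R)$, where Lemma \ref{lembee}/Corollary \ref{corlindecayW} give the uniform multiplier bound directly, or to verify the resolvent estimates of the Hille--Yosida theorem from the dispersion relation, rather than trying to deduce a uniform pointwise bound on $e^{-tP(\xi)}$ from the transformed estimate.
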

\begin{proof}
The infinitesimal semigroup generated by $\cA$ is necessarily associated to the solutions to the linear problem \eqref{Cauchylin}, which can be expressed in terms of the inverse Fourier transform of the solutions to \eqref{FouQHD2}. Indeed, suppose that $\hU = \hU(\xi,t)$ is the solution to \eqref{FouQHD2} with initial condition $\hU(\xi,0) = \widehat{f}(\xi)$. Then $U(x,t) = (e^{t \cA} f)(x)$ is the solution to \eqref{Cauchylin} with $U(0) = f = (f_1, f_2)^\top$, where
\begin{equation}
\label{repsg}
(e^{t \cA} f)(x) := \frac{1}{\sqrt{2 \pi}} \int_\R e^{i x \xi}e^{t R(i\xi)} \hat{f}(\xi) \, d \xi,
\end{equation}
and
\[
R(z) := - \begin{pmatrix} 0 & z \\ z \big( p'(\rho_*) -m_*^2 / \rho_*^2 - \tfrac{1}{2} k^2 z^2\big) & 2zm_* / \rho_* + z^2 \mu \end{pmatrix}, \quad z \in \C,
\]
\[
R(i\xi) = - (i \xi A(\xi) + B(\xi)), \qquad \xi \in \R.
\]
That $\{ e^{t\cA}\}_{t \geq 0}$ is a $C_0$-semigroup, where $\cA$ is the constant coefficient differential operator defined above, follows from standard Fourier estimates and semigroup theory (cf. \cite{Pazy83,EN06}); we omit the details. Now, since $\hU$ satisfies \eqref{FouQHD2}, then by Corollary \ref{corlindecayW} estimate \eqref{beeU} holds. Fix $\ell \in [0,s]$, multiply \eqref{beeU} by $\xi^{2 \ell}$ and integrate in $\xi \in \R$. This yields
\[
\int_\R \Big[ \xi^{2 \ell}(1+\xi^2) |\hU_1(\xi,t)|^2 + \xi^{2 \ell} |\hU_2(\xi,t)|^2 \Big] \, d\xi \leq C J_1(t) + C J_2(t),
\]
where,
\[
\begin{aligned}
J_1(t) &:= \int_{-1}^1 \Big[ \xi^{2 \ell}(1+\xi^2) |\hU_1(\xi,0)|^2 + \xi^{2 \ell} |\hU_2(\xi,0)|^2 \Big] \exp ( - 2\omega_0 \xi^2 t) \, d \xi,\\
J_2(t) &:= \int_{|\xi|\geq 1} \Big[ \xi^{2 \ell}(1+\xi^2) |\hU_1(\xi,0)|^2 + \xi^{2 \ell} |\hU_2(\xi,0)|^2 \Big]\exp ( - 2\omega_0 \xi^2 t) \, d \xi.
\end{aligned}
\]
Noticing that, clearly, $\exp (-2 \omega_0 \xi^2 t) \leq \exp (-\omega_0 \xi^2 t)$, we deduce
\[
J_1(t) \leq 2 \int_{-1}^1 \xi^{2 \ell} |\hU(\xi,0)|^2 e^{-\omega_0t \xi^2} \, d\xi \leq 2 \sup_{\xi \in \R} |\hU(\xi,0)|^2 \int_{-1}^1 \xi^{2 \ell} e^{-\omega_0t \xi^2} \, d\xi.
\]
But since for any fixed $\ell \in [0,s]$ and any constant $\omega_0 > 0$, the integral
\[
H_0(t) := (1+t)^{\ell + 1/2} \int_{-1}^1 \xi^{2 \ell} e^{-\omega_0t \xi^2} \, d\xi \leq C,
\]
is uniformly bounded for all $t > 0$ with some constant $C > 0$ (see Lemma A.1 in \cite{PlV22}), we arrive at
\begin{equation}
\label{estJ1}
J_1(t) \leq C (1+t)^{-(\ell + 1/2)} \| U(x,0)\|_{L^1}^2.
\end{equation}

Now, if $|\xi| \geq 1$ then $\exp (-2 \omega_0 t\xi^2) \leq \exp(-\omega_0 t)$. Hence, Plancherel's theorem implies that
\[
\begin{aligned}
J_2(t) &\leq e^{-\omega_0t} \int_{|\xi|\geq 1} \xi^{2 \ell}(1+\xi^2) |\hU_1(\xi,0)|^2 + \xi^{2 \ell} |\hU_2(\xi,0)|^2 \, d\xi \\
&= e^{-\omega_0t} \int_{|\xi|\geq 1} ( \xi^{2\ell}+ \xi^{2(\ell +1)} ) |\hU_1(\xi,0)|^2 + \xi^{2 \ell} |\hU_2(\xi,0)|^2 \, d\xi \\
&\leq  e^{-\omega_0t} \int_\R (\xi^{2\ell}+ \xi^{2(\ell+1)}) |\hU_1(\xi,0)|^2 + \xi^{2 \ell} |\hU_2(\xi,0)|^2 \, d\xi \\
&=  e^{-\omega_0t} \big( \| \partial_x^{\ell} U_1(0) \|_1^2 + \| \partial_x^{\ell} U_2(0) \|_0^2 \big),
\end{aligned}
\]
for all $t > 0$. Combining both estimates we obtain the result with $c_1 = \omega_0/2 > 0$, as claimed.
\end{proof}

\section{Global existence and decay of perturbations of equilibrium states}
\label{secglobal}

In this section we focus on the nonlinear problem. We prove the global existence and the decay of small perturbations to subsonic equilibrium states.

\subsection{Nonlinear energy estimates}

We start by establishing a priori energy estimates for solutions to the full nonlinear problem \eqref{QHD}. Let $U_* = (\rho_*, m_*) \in \R^2$ be a subsonic equilibrium state with $\rho_* > 0$. Then if $(\rho + \rhos,m + \ms)$ solves \eqref{QHD} with $U = (\rho,m)$ being a perturbation, then the latter satisfies the equivalent nonlinear system \eqref{nonlinQHD} where $\cA$ is the linearized operator around $U_*$ defined in \eqref{defcA}. Let us denote the initial perturbation as $(\rho_0, m_0)$ and suppose that
\[
\rho_0 \in H^{s+1}(\R) \cap L^1(\R), \qquad m_0 \in H^{s}(\R) \cap L^1(\R),
\]
for some $s \geq 3$. From the local existence theorem \ref{themlocale} we know that if $E_s(0)^{1/2} = (\| \rho_0 \|_{s+1}^2 + \| m_0 \|_s^2 )^{1/2} < a_0$ then there exists a local solution to system \eqref{nonlinQHD} in the perturbation variables, namely, $U = (\rho,m) \in X_s((0,T); r, R)$, for some $R \geq r > 0$, $T > 0$, and with initial condition $U(0) = U_0 := (\rho_0, m_0)$, such that $U + U_* = (\rho + \rhos, m + \ms)$ solves the original system \eqref{QHD} with initial condition $U(0) + U_* = (\rho_0 + \rho_*, m_0 + \ms)$. This local solution can be written in terms of the associated semigroup and the variations of constants formula,
\begin{equation}
\label{SemSol}
U(x,t)= e^{t \cA}U_{0} + \int_{0}^{t} e^{(t- \tau)\cA} \partial_x \begin{pmatrix} 0 \\ N_2\end{pmatrix}(\tau) \, d\tau.
\end{equation}

Therefore, for any fixed $0 \leq \ell \leq s-1$ we apply the decay estimates for the semigroup (see Lemma \ref{lemsg} and estimate \eqref{linestsg}) to obtain
\begin{equation}
\label{star1}
\begin{aligned}
\Big( \Vert \partial_{x}^{\ell}\rho(t) \Vert_{1}^{2} + \Vert \partial_{x}^{\ell}m(t) \Vert_{0}^{2} \Big)^{1/2} 
&\leq C  e^{-c_{1}t}\Big( \Vert  \partial_{x}^{\ell}\rho_{0} \Vert_{1}^{2} +\Vert \partial_{x}^{\ell}m_{0} \Vert^{2}_0 \Big)^{1/2} + \\
&\quad + C( 1+t )^{-(1/4 + \ell/2)} \big( \| \rho_0 \|_{L^1} + \|m_0 \|_{L^1}\big)  +\\
&\quad + C\int_{0}^{t} \Vert \partial_{x}^{\ell}(e^{(t-\tau)\cA} \partial_x N_{2}(\tau) )\Vert_0 \, d\tau . 
\end{aligned}
\end{equation}

From the representation of the semigroup in \eqref{repsg}, which leads to the expression $\hU = e^{-t R(i\xi)} \hU(0)$ for any solution to the linear problem, it is easy to verify the following identity,
\[
\partial_x^{\ell} \big( e^{t \cA} \partial_x f \big) = \partial_x^{\ell+1} \big( e^{t \cA} f\big),
\]
for any $f\in H^{s}(\R)$, $0 \leq \ell \leq s-1$ and $t \geq 0$; details are left to the reader. Therefore, we may apply estimate \eqref{linestsg} once again, but now with $\ell + 1 \leq s$ replacing $\ell$, in order to arrive at
\begin{equation}
\label{star2}
\begin{aligned}
\int_{0}^{t} \Vert \partial_{x}^{\ell}(e^{(t-\tau)\cA} \partial_x N_{2}(\tau) \Vert_0 \: d\tau &\leq C \int_0^t e^{-c_1(t-\tau)} \| \partial_x^{\ell+1} N_2(\tau) \|_0 \, d\tau + \\
&\quad + C \int_0^t (1+t-\tau)^{\ell/2 + 3/4} \| N_2(\tau) \|_{L^1} \, d\tau.
\end{aligned}
\end{equation}
Notice that the particular (conservative) form of the nonlinear term, namely $\partial_x (0, N_2)^\top$, is crucial to obtain the algebraic time decay inside that last integral. Upon substitution we obtain
\begin{equation}
\label{DerEstNlop}
\begin{aligned}
\Big( \Vert \partial_{x}^{\ell}\rho(t) \Vert_{1}^{2} + \Vert \partial_{x}^{\ell}m(t) \Vert_{0}^{2} \Big)^{1/2} 
&\leq C  e^{-c_{1}t}\Big( \Vert  \partial_{x}^{\ell}\rho_{0} \Vert_{1}^{2} +\Vert \partial_{x}^{\ell}m_{0} \Vert^{2}_0 \Big)^{1/2} + \\
&\quad + C( 1+t )^{-(1/4 + \ell/2)} \big( \| \rho_0 \|_{L^1} + \|m_0 \|_{L^1}\big)  +\\
&\quad + C \int_0^t e^{-c_1(t-\tau)} \| \partial_x^{\ell+1} N_2(\tau) \|_0 \, d\tau + \\
&\quad + C \int_0^t (1+t-\tau)^{\ell/2 + 3/4} \| N_2(\tau) \|_{L^1} \, d\tau,
\end{aligned}
\end{equation}
for all $0 \leq \ell \leq s-1$. Summing up estimates \eqref{DerEstNlop} for $\ell = 0, 1, \ldots, s-1$ yields
\[
\begin{aligned}
\| \rho(t) \|_s + \| m(t) \|_{s-1} &\leq C e^{-c_1 t} \big( \| \rho_0 \|_s + \|m_0 \|_{s-1} \big) + C (1+t)^{-1/4} \big( \| \rho_0 \|_{L^1} + \|m_0 \|_{L^1}\big) +  \\
&\quad + C \int_0^t e^{-c_1(t - \tau)} \| N_2(\tau) \|_s \, d \tau \, + \\
&\quad + C \int_0^t (1+t - \tau)^{-3/4} \| N_2(\tau) \|_{L^1} \, d \tau.
\end{aligned}
\]
Since, clearly, there exists a uniform constant $C > 0$ such that $e^{-c_1 t} \leq C(1+t)^{-1/4}$ for all $t \geq 0$, we simplify last estimate as
%\begin{equation}
\begin{align}
\| \rho(t) \|_s + \| m(t) \|_{s-1} &\leq C (1+t)^{-1/4} \Big( \| \rho_0 \|_s + \|m_0 \|_{s-1}  + \| \rho_0 \|_{L^1} + \|m_0 \|_{L^1}\Big) +  \nonumber\\
&\quad + C \int_0^t e^{-c_1(t - \tau)} \| N_2(\tau) \|_s \, d \tau \, + \label{estimatef}\\
&\quad + C \int_0^t (1+t - \tau)^{-3/4} \| N_2(\tau) \|_{L^1} \, d \tau. \nonumber
\end{align}
%\end{equation}

Now, we proceed with the estimation of the nonlinear terms. For that purpose we first recall some classical results.

\begin{lemma}
\label{lemaux1}
$\,$
\begin{itemize}
%\item[(a)] For any $s > n/2$, $n \in \N$, the space $H^s(\R^n)$ is a Banach algebra. Moreover, there exists a constant $C_s > 0$ such that
%\[
%\| uv \|_s \leq C_s \|u\|_s \|v \|_s,
%\] 
%for all $u,v \in H^s(\R^n)$.
\item[(a)] Let $u, v \in H^s(\R^n) \cap L^\infty(\R^n)$, for any $s \in \R$, $n \in \N$. Then
\[
\| uv \|_s \leq C \big( \| u \|_s \| v \|_\infty + \|u \|_\infty \|v \|_s \big),
\]
for some uniform constant $C > 0$.
\item[(b)] Let $s \geq 0$ and $k \geq 0$ be such that $s+k \geq [\frac{n}{2}] +1$. Assume that $u \in H^s(\R^n)$, $v \in H^k(\R^n)$. Then for $\ell = \min \{ s, k, s+k - [\frac{n}{2}] -1 \}$ we have $uv \in H^\ell(\R^n)$ and there exists a uniform $C_{s,k} > 0$ such that
\[
\| uv \|_\ell \leq C_{s,k} \| u \|_s \| v \|_k.
\]
In particular, if $s \geq [\frac{n}{2}] +1$, $0 \leq \ell \leq s$ and $u \in H^s(\R^n)$, $v \in H^\ell(\R^n)$, then
\[
\| uv \|_\ell \leq C_s \| u \|_s \| v \|_\ell,
\]
for some uniform $C_s > 0$.
\end{itemize}
\end{lemma}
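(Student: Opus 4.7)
The plan is to prove these two bilinear Sobolev inequalities via standard Fourier-analytic machinery. No new ideas are required; both parts are classical and routinely appear in the literature on quasilinear hyperbolic equations, so the cleanest exposition is to outline the argument and refer to a standard reference for full details.

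For part (a), I would employ Bony's paraproduct decomposition $uv = T_u v + T_v u + R(u,v)$, where $T_f g = \sum_j S_{j-1}f\,\Delta_j g$ is the paraproduct associated to a Littlewood--Paley dyadic decomposition $\{\Delta_j\}$ and $R(u,v)$ is the resonant remainder. The standard paraproduct estimates
$$\|T_u v\|_s \leq C\|u\|_{L^\infty}\|v\|_s, \qquad \|T_v u\|_s \leq C\|v\|_{L^\infty}\|u\|_s,$$
hold for every $s \in \R$ by spectral localization and Bernstein's inequality, and the remainder satisfies an analogous bound $\|R(u,v)\|_s \leq C\bigl(\|u\|_{L^\infty}\|v\|_s + \|v\|_{L^\infty}\|u\|_s\bigr)$ by the same machinery. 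Summing the three contributions yields (a). If $s$ is a non-negative integer one may avoid paraproducts altogether: expand $\partial^\alpha(uv)$ via Leibniz's rule and interpolate each $\|\partial^\beta u\,\partial^{\alpha-\beta}v\|_0$ using the Gagliardo--Nirenberg inequalities, then absorb the mixed factors by Young's inequality.

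For part (b), I would use the convolution representation $\widehat{uv} = (2\pi)^{-n/2}\,\hat u * \hat v$ together with the Peetre-type bound $\langle\xi\rangle^\ell \leq C_\ell(\langle\eta\rangle^\ell + \langle\xi-\eta\rangle^\ell)$ valid for $\ell \geq 0$, splitting the convolution into two symmetric pieces. Applying Cauchy--Schwarz in the $\eta$-variable and invoking $\|\hat f\,\|_{L^1} \leq C\|f\|_r$ for $r > n/2$ (which follows from $\int_{\R^n}\langle\zeta\rangle^{-2r}\,d\zeta < \infty$ and Cauchy--Schwarz), one reduces matters to an $L^2$-type bound on the remaining factor. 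The integrability requirement is precisely what forces the threshold $s+k \geq [\tfrac{n}{2}]+1$. The stated specialization to $s \geq [\tfrac{n}{2}]+1$, $0 \leq \ell \leq s$, then yields $\ell = \min\{s,k,s+k-[\tfrac{n}{2}]-1\} = k$.

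The main obstacle is matching the three exponents $s$, $k$, and $s+k-[\tfrac{n}{2}]-1$ optimally in part (b): one must split the integration region into $\{|\eta| \lesssim |\xi-\eta|\}$ and its complement, and within each region choose which variable carries the derivatives so as to attain the stated $\ell$. The remaining manipulations are routine estimates on convolution integrals. Given the classical nature of these inequalities, I anticipate that the cleanest way to present this lemma in the paper is simply to state it and invoke standard references rather than reproduce a full proof.
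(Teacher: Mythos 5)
Your anticipation at the end is exactly right: the paper does not prove this lemma at all, but simply cites the literature. Part~(a) is attributed to Lemma~3.2 of Hattori and Li \cite{HaLi96a}, and part~(b) is stated as a corollary of Nirenberg's interpolation inequalities \cite{Nir59}, with a pointer to Corollary~2.2 and Lemmata~2.1, 2.3 in Kawashima \cite{KaTh83}. Your two sketches (paraproducts/Gagliardo--Nirenberg for~(a), a Peetre-type convolution estimate for~(b)) are both standard and sound ways of actually proving the statements, so they are not so much a different route from the paper as a filling-in of details the paper delegates to references. One small caveat worth recording: in the paraproduct decomposition $uv = T_u v + T_v u + R(u,v)$, the bounds $\|T_u v\|_s \lesssim \|u\|_\infty \|v\|_s$ and $\|T_v u\|_s \lesssim \|v\|_\infty \|u\|_s$ hold for \emph{all} $s\in\R$, but the remainder estimate $\|R(u,v)\|_s \lesssim \|u\|_\infty\|v\|_s + \|v\|_\infty\|u\|_s$ requires $s>0$ in general, because $R(u,v)$ is only frequency-localized to balls rather than annuli. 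For $s\leq 0$ one must argue differently (e.g., duality, or the trivial bound $\|uv\|_0 \leq \|u\|_\infty\|v\|_0$). This does not affect the paper, which only ever invokes~(a) with $s\geq 0$, but as stated the Lemma claims all $s\in\R$, and your proposed proof would not cover the negative range without modification.
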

\begin{proof}
For the proof of (a) see Lemma 3.2 in \cite{HaLi96a}. The proof of (b) is a corollary of the interpolation inequalities obtained by Nirenberg \cite{Nir59} (see also Corollary 2.2 and Lemmata 2.1 and 2.3 in \cite{KaTh83}). 
\end{proof}

\begin{corollary}
\label{corBanachalg}
For any $s > n/2$, $n \in \N$, the space $H^s(\R^n)$ is a Banach algebra. Moreover, there exists a constant $C_s > 0$ such that
\[
\| uv \|_s \leq C_s \|u\|_s \|v \|_s,
\] 
for all $u,v \in H^s(\R^n)$.
\end{corollary}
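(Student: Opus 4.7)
The plan is to deduce the corollary directly from part (a) of Lemma \ref{lemaux1}, combined with the Sobolev embedding theorem. Since $s > n/2$, the classical Sobolev embedding yields $H^s(\R^n) \hookrightarrow L^\infty(\R^n)$ with a uniform constant, i.e., there exists $\widetilde{C}_s > 0$ such that $\|u\|_{L^\infty} \leq \widetilde{C}_s \|u\|_s$ for every $u \in H^s(\R^n)$. In particular, for $u,v \in H^s(\R^n)$ we have $u,v \in L^\infty(\R^n)$, which puts us in the setting of part (a) of Lemma \ref{lemaux1}.

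Applying Lemma \ref{lemaux1}(a) with this choice of $u$ and $v$, we obtain
\begin{equation*}
\| uv \|_s \leq C \big( \| u \|_s \| v \|_{L^\infty} + \|u \|_{L^\infty} \|v \|_s \big),
\end{equation*}
for some uniform $C > 0$. Substituting the Sobolev embedding bound gives
\begin{equation*}
\| uv \|_s \leq C \widetilde{C}_s \big( \|u\|_s \|v\|_s + \|u\|_s \|v\|_s \big) = 2 C \widetilde{C}_s \|u\|_s \|v\|_s,
\end{equation*}
which is the claimed multiplicative inequality with $C_s := 2C\widetilde{C}_s > 0$. This inequality shows that pointwise multiplication $H^s(\R^n) \times H^s(\R^n) \to H^s(\R^n)$ is continuous, so that $H^s(\R^n)$ is closed under products and the product is bounded. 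Together with the already established Banach space structure of $H^s(\R^n)$, this gives the Banach algebra property.

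There is essentially no obstacle here: the statement is the standard fact that Sobolev spaces are Banach algebras above the critical exponent $n/2$, and the only content is to identify the right two ingredients already at hand, namely the tame estimate from Lemma \ref{lemaux1}(a) and the $L^\infty$ control coming from the Sobolev embedding. The slight sharpening relative to Lemma \ref{lemaux1}(b) (which was stated under the hypothesis $s \geq [n/2]+1$) is precisely what the $L^\infty$ embedding for $s > n/2$ provides, and it is what allows the corollary to be formulated at the optimal scaling threshold.
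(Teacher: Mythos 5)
Your proof is correct, but it takes a different route from the paper: the paper derives the corollary directly from Lemma \ref{lemaux1}(b) (taking $k=s$ and $\ell=s$), whereas you combine Lemma \ref{lemaux1}(a) with the Sobolev embedding $H^s(\R^n)\hookrightarrow L^\infty(\R^n)$. Both arguments are valid, but yours is in fact slightly more careful with the stated threshold: Lemma \ref{lemaux1}(b) with $k=s$ yields $\ell=\min\{s,\,2s-[n/2]-1\}=s$ only when $s\geq[n/2]+1$, which for non-integer $s$ is a strictly stronger condition than $s>n/2$ (e.g.\ $n=1$, $s\in(1/2,1)$). Your route via part~(a) plus the embedding works for the full range $s>n/2$ as stated. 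So the student's observation at the end of the proposal is correct, and the blind proof actually closes a small gap in the paper's one-line justification; what the paper's citation of part~(b) buys, in turn, is slightly less machinery for the regime $s\geq[n/2]+1$ that is actually used in the rest of the paper (where $s\geq3$).
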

\begin{proof}
Follows immediately from Lemma \ref{lemaux1} (b).
\end{proof}

We also need some estimates on composite functions.

\begin{lemma}
\label{lemaux2}
Let $s \geq 1$ and suppose that $Y = (Y_1, \ldots, Y_m)$, $m \in \N$, $Y_i \in H^s(\R^n) \cap L^\infty(\R^n)$, for all $1 \leq i \leq m$. Let $\Lambda = \Lambda(Y)$, $\Lambda : \R^m \to \R^m$, be a $C^\infty$ function. Then for each $1 \leq j \leq s$ there hold $\partial_x \Lambda(Y) \in H^{j-1}(\R^n)$ and
\[
\| \partial_x \Lambda (Y) \|_{j-1} \leq C M \big( 1+\| Y \|_\infty \big)^{j-1} \| \partial_x Y \|_{j-1},
\]
where $C > 0$ is a uniform constant and
\[
M = \sum_{k=1}^j \sup_{\substack{V \in \R^m\\ |V|\leq \| Y \|_\infty}} | D^k_Y \Lambda(Y)| > 0.
\]
\end{lemma}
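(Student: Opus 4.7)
The statement is a classical Moser--Nirenberg composite-function estimate, so my first inclination would be to invoke the corresponding results in the literature (e.g., Moser's original lemma, Proposition 2.1 in \cite{KaTh83}, or the standard composition estimates in Taylor's PDE text). For a self-contained argument, however, I would proceed by induction on $j$ and reduce everything to the Faà di Bruno formula combined with Lemma \ref{lemaux1} and a Gagliardo--Nirenberg interpolation.

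The base case $j=1$ is immediate: by the chain rule $\partial_x \Lambda(Y) = (D_Y\Lambda)(Y)\,\partial_x Y$, whence
\[
\|\partial_x\Lambda(Y)\|_0 \leq \|(D_Y\Lambda)(Y)\|_{L^\infty}\|\partial_x Y\|_0 \leq M\,\|\partial_x Y\|_0,
\]
which matches the claim since $(1+\|Y\|_\infty)^{0}=1$. For the inductive step, fix a multi-index $\alpha$ with $1\leq|\alpha|\leq j$ and apply Faà di Bruno to expand
\[
\partial^{\alpha}\Lambda(Y) \;=\; \sum_{k=1}^{|\alpha|}\sum_{\substack{\beta_1+\cdots+\beta_k=\alpha\\ |\beta_i|\geq 1}} c_{\alpha,k,\beta}\,(D_Y^{k}\Lambda)(Y)\,\prod_{i=1}^{k}\partial^{\beta_i} Y.
\]
The coefficient $(D_Y^{k}\Lambda)(Y)$ is pointwise bounded by $M$, so the whole problem reduces to controlling the $L^2$ norm of the multilinear products $\prod_{i=1}^{k}\partial^{\beta_i} Y$ under the constraint $\sum|\beta_i|=|\alpha|\leq j$, $|\beta_i|\geq 1$.

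To handle such a product I would iteratively use Lemma \ref{lemaux1}(a) to keep one factor in $L^2$ and put all the others in $L^\infty$, then interpolate each $\|\partial^{\beta_i}Y\|_{L^\infty}$ between $\|Y\|_\infty$ and $\|\partial_x Y\|_{j-1}$ via Gagliardo--Nirenberg:
\[
\|\partial^{\beta_i} Y\|_{L^\infty} \;\leq\; C\,\|Y\|_\infty^{1-\sigma_i}\,\|\partial_x Y\|_{j-1}^{\sigma_i},
\]
with interpolation exponents $\sigma_i\in[0,1]$ chosen so that $\sum_i \sigma_i = 1$. Multiplying these bounds together contributes exactly one factor of $\|\partial_x Y\|_{j-1}$ and a power $\|Y\|_\infty^{k-1-\sum\sigma_i + 1}$ that telescopes into at worst $(1+\|Y\|_\infty)^{k-1}\leq(1+\|Y\|_\infty)^{j-1}$ since $k\leq j$. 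Summing over the finitely many partitions and over $1\leq|\alpha|\leq j$ then yields the stated estimate for $\|\partial_x\Lambda(Y)\|_{j-1}$.

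The main obstacle is the combinatorial bookkeeping in the interpolation step: one must verify that for every admissible partition $(\beta_1,\ldots,\beta_k)$ the Gagliardo--Nirenberg exponents $\sigma_i$ can be chosen consistently so that the total power of $\|\partial_x Y\|_{j-1}$ comes out to be exactly $1$, and that the residual $\|Y\|_\infty$ power does not exceed $j-1$. Once this is verified for each partition, the rest is a routine triangle inequality over the finite sum in Faà di Bruno.
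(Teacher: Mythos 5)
The paper does not give a proof at all: it simply cites Vol\cprime pert and Hudjaev \cite{VoH72} and Lemma~2.4 of Kawashima's thesis \cite{KaTh83}. Your first instinct to do the same would therefore have matched the paper exactly. The self-contained argument you sketch, however, has a genuine gap in the interpolation step, and I want to flag it because the issue is subtle enough to be easy to miss.

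The Fa\`a di Bruno reduction is correct, and the crux is indeed to show
\[
\Big\| \prod_{i=1}^{k}\partial^{\beta_i} Y \Big\|_{L^2} \le C\, \|Y\|_{L^\infty}^{k-1}\,\|\partial_x Y\|_{|\alpha|-1},
\qquad \sum_i |\beta_i| = |\alpha|,\quad |\beta_i|\ge 1.
\]
But the way you propose to reach it does not close. You suggest using Lemma~\ref{lemaux1}(a) iteratively to keep one factor in $L^2$ and put the remaining $k-1$ factors in $L^\infty$, and then interpolating each $\|\partial^{\beta_i} Y\|_{L^\infty}$ between $\|Y\|_{L^\infty}$ and $\|\partial_x Y\|_{j-1}$ with exponents summing to~$1$. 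That bookkeeping cannot be made consistent: the factor you keep in $L^2$ already contributes a \emph{full} power of $\|\partial_x Y\|_{j-1}$ (since $\|\partial^{\beta_1}Y\|_{L^2}\le\|\partial_x Y\|_{j-1}$), so the remaining $L^\infty$ factors would have to be bounded by $\|Y\|_{L^\infty}$ with \emph{zero} interpolation weight. For $|\beta_i|\ge 1$ this is impossible --- $\|\partial^{\beta_i} Y\|_{L^\infty}$ simply is not controlled by $\|Y\|_{L^\infty}$ alone. Any honest Gagliardo--Nirenberg bound on $\|\partial^{\beta_i}Y\|_{L^\infty}$ brings in a strictly positive power of the higher norm, so the total exponent on $\|\partial_x Y\|_{j-1}$ in your product comes out strictly larger than~$1$, not equal to~$1$.

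The repair is the classical one (this is in effect what \cite{VoH72} and \cite{KaTh83} do): instead of placing factors in $L^2$ and $L^\infty$, place $\partial^{\beta_i}Y$ in $L^{p_i}$ with $p_i = 2|\alpha|/|\beta_i|$, so that H\"older gives $\sum 1/p_i = 1/2$, and then use Gagliardo--Nirenberg in the form
\[
\|\partial^{\beta_i} Y\|_{L^{p_i}} \le C\,\|Y\|_{L^\infty}^{\,1-|\beta_i|/|\alpha|}\,\|D^{|\alpha|} Y\|_{L^2}^{\,|\beta_i|/|\alpha|}.
\]
Taking the product over $i$ yields exactly one power of $\|D^{|\alpha|}Y\|_{L^2}\le\|\partial_x Y\|_{j-1}$ and $k-1$ powers of $\|Y\|_{L^\infty}$, with no leftover interpolation exponents to reconcile. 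Lemma~\ref{lemaux1}(a) is not the right tool here, because it forces the lopsided $L^2$--$L^\infty$ split that is precisely what breaks the exponent count.
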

\begin{proof}
See Vol$'$pert and Hudjaev \cite{VoH72} (see also Lemma 2.4 in \cite{KaTh83}).
\end{proof}

With this information at hand we proceed to estimate the terms $\| N_2 \|_{L^1}$ and $\| N_2 \|_{s}$ inside the integrals in \eqref{estimatef}. First, from \eqref{orderN2} we know that
\[
N_2 = O\big(\rho^2 + m^2 + \rho_x^2 + |\rho||\rho_{xx}|\big),
\]
where $\rho$ and $m$ are the perturbation variables. First, we estimate the term $\rho_x^2$. From Lemma \ref{lemaux1} and by Sobolev imbedding theorem, we get
\[
\Vert \rho_{x}^{2}(\tau) \Vert_{s} \leq 2C\Vert \rho_{x}(\tau)\Vert_{s} \| \rho_{x}(\tau) \|_{L^{\infty}} \leq C \Vert \rho_{x}(\tau) \Vert_{s+1} \Vert \rho(\tau) \Vert_{2},
\]
for any $\tau \in [0,t]$ and because $s \geq 3$.  Applying the Sobolev calculus inequalities from Lemma \ref{lemaux2}, we arrive at
\begin{equation}
\label{N2-s}
\begin{split}
\| N_{2}(\tau) \|_{s} &\leq C \big( \Vert \rho(\tau) \Vert_{s}^{2} + \Vert \rho(\tau) \Vert_{s}\Vert \rho_{xx}(\tau) \Vert_{s} +   \Vert \rho(\tau) \Vert_{2} \Vert \rho_{x}(\tau) \Vert_{s+1} \big)
\\ &\leq C \left( \Vert \rho(\tau) \Vert_{s}^{2}+  \Vert \rho(\tau) \Vert_{s}\Vert \rho_{x}(\tau) \Vert_{s+1}  \right),
\end{split}
\end{equation}
and 
\begin{equation}\label{N2-L1}
\Vert N_{2}(\tau) \Vert_{L^{1}} \leq C \big( \| \rho(\tau)\|_2^2 + \| m(\tau) \|_1^2\big) \leq C \big( \| \rho(\tau)\|_{s}^2 + \| m(\tau) \|_{s-1}^2\big),
\end{equation} 
for all $\tau \in [0,t]$ because $s\geq 3$, where $C > 0$ is a uniform constant. Combine estimates \eqref{estimatef}, \eqref{N2-s} and \eqref{N2-L1} in order to get

\begin{align}
\| \rho(t) \|_s + \| m(t) \|_{s-1} &\leq C (1+t)^{-1/4} \Big( \| \rho_0 \|_s + \|m_0 \|_{s-1}  + \| \rho_0 \|_{L^1} + \|m_0 \|_{L^1}\Big) +  \nonumber\\
&\quad+ C \sup_{0 \leq \tau \leq t} \Vert \rho(\tau) \Vert_{s} \int_{0}^{t}e^{-c_{1}(t-z)}\Vert \rho(\tau) \Vert_{s} \, d\tau \, + \nonumber\\
&\quad+ C \left( \int_{0}^{t}\Vert m_{x}(\tau)\Vert_{s}^{2}\: d\tau \right)^{1/2} \left( \int_{0}^{t}e^{-2c_{1}(t-\tau)}\Vert \rho(\tau) \Vert_{s}^{2}\: d\tau \right)^{1/2}\: + \nonumber\\
&\quad + C \left( \int_{0}^{t}\Vert \rho_{x}(\tau) \Vert_{s+1}^{2}\: d\tau \right)^{1/2} \left( \int_{0}^{t}e^{-2c_{1}(t-\tau)}\Vert \rho(\tau) \Vert_{s}^{2} \: d\tau \right)^{1/2}\: + \nonumber\\
%&\quad + C \int_0^t e^{-c_1(t - \tau)} \| N_2(\tau) \|_s \, d \tau + \label{nonestNs-1.2}\\
&\quad + C \int_0^t (1+t - \tau)^{-3/4} \Big( \| \rho(\tau)\|_{s}^2 + \| m(\tau) \|_{s-1}^2 \Big) \, d \tau. \label{nonestNs-1.2}
\end{align}
%
%\begin{equation}
%\label{nonestHs-1.2}
%\begin{split}
%\Vert U(t) \Vert_{s-1} &\leq C \left(1+t \right)^{-1/4} \left( \Vert U_{0}\Vert_{s-1}+ \Vert U_{0} \Vert_{L^{1}} \right) \:+ \\ & \:\:\:\: + C\sup_{0 \leq z \leq t} \Vert v(z) \Vert_{s} \int_{0}^{t}e^{-c_{1}(t-z)}\Vert v(z) \Vert_{s}\:dz \:+ \\ &\:\:\:\: +C \left( \int_{0}^{t}\Vert u_{x}(z)\Vert_{s}^{2}\: dz \right)^{1/2} \left( \int_{0}^{t}e^{-2c_{1}(t-z)}\Vert v(z) \Vert_{s}^{2}\: dz \right)^{1/2}\: + \\ &\:\:\:\:+ C\left( \int_{0}^{t}\Vert v_{x}(z) \Vert_{s+1}^{2}\: dz \right)^{1/2} \left( \int_{0}^{t}e^{-2c_{1}(t-z)}\Vert v(z) \Vert_{s}^{2} \: dz \right)^{1/2}\: + \\ &\:\:\:\: + C\int_{0}^{t}\left( 1+ t-z\right)^{-3/4} \Vert U(z) \Vert_{s-1}^{2} \: dz.
%\end{split}
%\end{equation}
If we denote
\[
\begin{aligned}
G_{s}(t) &:= \sup_{0 \leq \tau \leq t} \left( 1+ \tau \right)^{1/4} \Big( \| \rho(\tau)\|_{s} + \| m(\tau) \|_{s-1} \Big),\\
Q_s(t) &:= \sup_{0 \leq \tau \leq t} \Big(\Vert \rho(\tau)\Vert_{s+1}^{2}+ \Vert m(\tau) \Vert_{s}^{2}  \Big) + \int_{0}^{t} \left(  \Vert \rho_x(\tau)\Vert_{s+1}^{2} + \Vert m_x(t)\Vert_{s}^{2} \right)\: dt, \\
&= E_s(t) + F_s(t),
\end{aligned}
\]
where $E_s(t)$ and $F_s(t)$ are defined in \eqref{defEs} and \eqref{defFs}, respectively, then we can recast estimate \eqref{nonestNs-1.2} in a simplified form, namely as
\begin{equation}
\label{nonestNs-1.sup}
\begin{aligned}
G_{s}(t) &\leq C \Big( \| \rho_0 \|_s + \|m_0 \|_{s-1}  + \| \rho_0 \|_{L^1} + \|m_0 \|_{L^1}\Big) + C H_{1}(t) Q_s(t)^{1/2} G_{s}(t) +\\
&\qquad  + C H_{2}(t) G_{s}(t)^2,
\end{aligned}
\end{equation}
%\begin{equation}
%\label{nonestNs-1.sup}
%G_{s}(t) \leq C \Big( \| \rho_0 \|_s + \|m_0 \|_{s-1}  + \| \rho_0 \|_{L^1} + \|m_0 \|_{L^1}\Big) + C I_{1}(t)\vertiii{U}_{s,t} E_{s}(t) + C I_{2}(t) E_{s}(t)^2,
%\end{equation}
where 
\begin{equation}\label{mu1}
\begin{split}
H_{1}(t) &:= \sup_{0\leq \tau \leq t}\left( 1+ \tau \right)^{1/4} \int_{0}^{\tau}e^{-c_{1}(\tau-z)}(1+z)^{-1/4}\: dz \: + \\ & \:\:\:\: + \sup_{0\leq \tau \leq t}\left( 1 + \tau \right)^{1/4}\left[ \int_{0}^{\tau}e^{-2c_{1}(\tau-z)}(1+z)^{-1/2} \: dz \right]^{1/2},
\end{split}
\end{equation}
\begin{equation}\label{mu2}
H_{2}(t) := \sup_{0 \leq \tau \leq t}\left( 1+ \tau \right)^{1/4}\int_{0}^{\tau}\left( 1+\tau-z \right)^{-3/4}(1+z)^{-1/2} \: dz.
\end{equation}
Since both integrals, $H_{1}(t)$ and $H_{2}(t)$, are uniformly bounded in $t \geq 0$ (see Lemma A.1 in \cite{PlV22}), we readily obtain the estimate
\begin{equation}
\label{finnonest}
G_{s}(t) \leq C\Big( \| \rho_0 \|_s + \|m_0 \|_{s-1}  + \| \rho_0 \|_{L^1} + \|m_0 \|_{L^1} \!\Big) + C Q_s(t)^{1/2} G_{s}(t)  +  C G_{s}(t)^2.
\end{equation}
Last estimate will be used in a key way to obtain the global decay of perturbations to constant equilibrium states.
%%%

\subsection{Global existence and decay of solutions}

After all these preparations we are ready to prove our main result.

\begin{theorem}[global decay of perturbations of subsonic equilibrium states]
\label{gloexth}
Let $(\rho_*, m_*) \in \R^2$, with $\rho_* > 0$, be a constant equilibrium state of system \eqref{QHD} which satisfies the subsonicity assumption
\[
p'(\rho_*) = \gamma \rho_*^{\gamma -1}> \frac{m_*^2}{\rho_*^2}.
\]
Suppose that $\rho_0  \in H^{s+1}(\R) \cap L^1(\R)$, $m_0  \in H^s(\R)\cap L^1(\R)$, for some $s \geq 3$. Then there exists a positive constant $\e_{2} \leq a_0$ (with $a_{0}$ as in Theorem \ref{themlocale}) such that if 
\begin{equation}
\label{smallcond}
\| \rho_0 \|_{s+1} + \| m_0  \|_{s} + \| \rho_0 \|_{L^1} + \| m_0 \|_{L^1} < \e_2,
\end{equation}
then the Cauchy problem for the QHD system \eqref{QHD} with initial condition $(\rhos+\rho_0,\ms+m_0)(x)$, $x \in \R$, has a unique solution of the form $(\rhos+\rho, \ms+m)(x,t)$ satisfying
\begin{equation}
\label{globsol}
\begin{split}
& \:\: \rho  \in C\left((0,\infty);H^{s+1}(\mathbb{R})) \cap C^{1}((0,\infty); H^{s-1}(\mathbb{R})\right), \\ & \:\: m  \in C\left((0,\infty); H^{s}(\mathbb{R})) \cap C^{1}((0,\infty);H^{s-2}(\mathbb{R})\right) \\ & \:\:(\rho_{x},m_{x})\in L^{2}\left((0,\infty);H^{s+1}(\mathbb{R}) \times H^{s}(\mathbb{R})\right).
\end{split}
\end{equation}
Moreover, the following estimates hold,
%\begin{equation}
%\label{eee3}
%\big(E_{s}(t) + F_{s}(t) \big)^{1/2} \leq C_3 E_s(0),
%\end{equation}
%and
%\begin{equation}
%\label{globdec}
%E_{s-1}(t) \leq C_4 (1+t)^{-1/4} \Big( E_{s-1}(0) + \| \rho_0 -\rhos\|_{L^1} + \| m_0 - \ms \|_{L^1}\Big),
%\end{equation}
%for every $t \geq 0$, some uniform $C_j > 0$ and where
%\begin{align*}
%E_s(t) &= \sup_{\tau \in [0,t]} \Big[ \| \rho(\tau)-\rhos\|^2_{s+1} + \| m(\tau)-\ms\|^2_{s} \Big], 
%\\
%F_s(t) &= \int_0^t \Big[ \| \rho_x(\tau) \|_{s+1}^2 + \| m_x(\tau) \|_s^2 \Big] \, d\tau .
%\end{align*}
%
\begin{equation}\label{globdec}
\| \rho(t)\|_{s} + \| m(t) \|_{s-1} \leq C_{1} \left(1+ t \right)^{-1/4} \!\Big( \| \rho_0 \|_s + \|m_0 \|_{s-1}  + \| \rho_0 \|_{L^1} + \|m_0 \|_{L^1}\Big),
\end{equation}
and
\begin{equation}\label{trinormest}
Q_s(t)^{1/2} \leq C_{2} \Big( \| \rho_0 \|_s + \|m_0 \|_{s-1} \Big),
\end{equation}
for every $t \in [0, \infty)$ and some uniform $C_j > 0$, where
\[
Q_s(t) = \sup_{0 \leq \tau \leq t} \Big(\Vert \rho(\tau)\Vert_{s+1}^{2}+ \Vert m(\tau) \Vert_{s}^{2}  \Big) + \int_{0}^{t} \left(  \Vert \rho_x(\tau)\Vert_{s+1}^{2} + \Vert m_x(t)\Vert_{s}^{2} \right)\: dt.
\]
\end{theorem}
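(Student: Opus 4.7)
My approach is a standard continuation-by-bootstrap argument that promotes the local solution of Theorem~\ref{themlocale} to a global one, closing estimates by means of the two cornerstones already in hand: the a priori bound of Corollary~\ref{cor26} for $Q_s(t)=E_s(t)+F_s(t)$, and the nonlinear integral inequality~\eqref{finnonest} for the time-weighted norm $G_s(t)$. Write $I_0 := \|\rho_0\|_{s+1} + \|m_0\|_s + \|\rho_0\|_{L^1} + \|m_0\|_{L^1}$. Since $I_0 < \e_2 \leq a_0$, Theorem~\ref{themlocale} yields a unique local solution in the class~\eqref{globsol} on a maximal interval $(0,T^*)$; I argue by contradiction and suppose $T^* < \infty$.

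For a constant $\eta>0$ to be fixed later, define
$$T := \sup\bigl\{\, t \in [0,T^*) \; : \; E_s(\tau)^{1/2} \leq \e_1 \ \text{and} \ G_s(\tau) \leq \eta \ \text{for all } \tau \in [0,t]\,\bigr\}.$$
Taking $\e_2 < \tfrac{1}{2}\min\{\e_1,\eta\}$ ensures $T>0$ by continuity of the norms. On $[0,T]$, Corollary~\ref{cor26} applies and delivers $Q_s(t)^{1/2} \leq C_2 E_s(0)^{1/2} \leq C_2 I_0$. Inserting this into~\eqref{finnonest} and using $G_s(t)\leq \eta$ in the quadratic term yields
$$G_s(t) \,\leq\, C I_0 + \bigl(C C_2 I_0 + C\eta\bigr)\,G_s(t).$$
Fix $\eta$ so that $C\eta \leq \tfrac{1}{4}$, and then shrink $\e_2$ so that $CC_2\e_2 \leq \tfrac{1}{4}$; the linear-in-$G_s$ term on the right is absorbed and one obtains $G_s(t) \leq 2 C I_0$ together with $E_s(t)^{1/2} \leq Q_s(t)^{1/2} \leq C_2 I_0$ throughout $[0,T]$.

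Requiring further $\e_2 \leq \min\{\eta/(4C),\, \e_1/(2C_2)\}$ strictly improves the bootstrap hypotheses (to $G_s(t) \leq \eta/2$ and $E_s(t)^{1/2} \leq \e_1/2$), so an open-closed continuity argument forces $T=T^*$. Since the $H^{s+1}\times H^s$ norm of the state remains uniformly bounded as $t\nearrow T^*$, a reapplication of Theorem~\ref{themlocale} starting from a time close to $T^*$ extends the solution past $T^*$, contradicting maximality. Hence $T^*=\infty$, and the uniform bounds $G_s(t)\leq 2 C I_0$ and $Q_s(t)^{1/2}\leq C_2 I_0$, valid for all $t\geq 0$, translate directly into~\eqref{globdec} and~\eqref{trinormest} (absorbing the $L^1$ data into the constant in the latter), while the regularity and time-continuity in~\eqref{globsol} are inherited from Theorem~\ref{themlocale} applied on each time subinterval, with uniqueness propagated from local uniqueness.

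The main delicate point is the ordered calibration of the three smallness parameters $\e_1, \eta, \e_2$: $\eta$ must be fixed first to absorb the quadratic term $CG_s(t)^2$ arising in~\eqref{finnonest}; $\e_2$ must then be chosen small enough to simultaneously make $CC_2 \e_2 \leq 1/4$ (taming the linear-in-$I_0$ coefficient), to preserve $\e_2 \leq a_0$ (so that local existence applies), and to produce the strict improvement of both bootstrap hypotheses. This sequencing is what prevents the argument from being circular; nothing else of substance is required, since all the heavy analytical work is already encapsulated in Corollary~\ref{cor26} and in the integral inequality~\eqref{finnonest}.
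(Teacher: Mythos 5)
Your proposal is correct in its essentials and takes a genuinely different, though equivalent, route from the paper. Where the paper argues by an explicit iteration in time — establishing the estimates on $[0,T_1]$, then using the endpoint data to restart the local existence result and obtain bounds on $[T_1,2T_1]$, and so on, while carefully calibrating $\varepsilon_1$, $\delta_1$, $\varepsilon_2$ so that the constant $C_0$ from Theorem~\ref{themlocale} and $C_2$ from Corollary~\ref{cor26} compound benignly at each step — you instead run a continuation (bootstrap) argument: you define $T$ as the supremum of times where $E_s^{1/2}\leq\varepsilon_1$ and $G_s\leq\eta$ hold, feed those hypotheses into Corollary~\ref{cor26} and estimate~\eqref{finnonest}, show the bounds self-improve (to $\varepsilon_1/2$ and $\eta/2$), and conclude by an open-closed argument plus a blow-up-criterion extension past $T^*$. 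Both arguments use exactly the same two ingredients — the a priori bound on $Q_s$ and the nonlinear integral inequality~\eqref{finnonest} — and both require the same ordered calibration of smallness parameters. Your version is arguably cleaner, as it avoids the bookkeeping of compounding constants across time intervals, while the paper's is more explicit about where the geometric factor $(1+C_0^2)^{1/2}$ enters.

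One small imprecision worth flagging: estimate~\eqref{finnonest} already carries the lower-order data $\|\rho_0\|_s+\|m_0\|_{s-1}+\|\rho_0\|_{L^1}+\|m_0\|_{L^1}$ as its source term (call it $J_0$), not the higher-order quantity $I_0$ that Corollary~\ref{cor26} requires for the $Q_s$ bound. In your absorption step you replace $J_0$ by $I_0$, which is legitimate since $J_0\leq I_0$ but yields $G_s(t)\leq 2CI_0$ rather than the sharper $G_s(t)\leq 2CJ_0$ that matches~\eqref{globdec} exactly. Keeping $J_0$ and $I_0$ distinct — using $J_0$ for the source in~\eqref{finnonest} and $I_0$ only for the coefficient $CC_2I_0$ multiplying $G_s$ — recovers the statement verbatim with no change to the argument. (The paper itself has a parallel slip in the statement of~\eqref{trinormest}, where the displayed norms $\|\rho_0\|_s+\|m_0\|_{s-1}$ should read $\|\rho_0\|_{s+1}+\|m_0\|_s$ to match what Corollary~\ref{cor26} actually delivers, as the paper's own proof shows.)
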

\begin{proof}
%% Change
By virtue of estimate \eqref{finnonest}, we can select $\e_{1}\leq a_{0}$, $\e_{1}$ small enough as in Corollary \ref{cor26}, and $\delta_{1}= \delta_{1}(\e_{1})$ such that for $Q_s(T_1)\leq \e_{1}$ and 
\begin{equation}
\label{vercond}
\| \rho_0 \|_s + \|m_0 \|_{s-1}  + \| \rho_0 \|_{L^1} + \|m_0 \|_{L^1} < \delta_{1},
\end{equation}
there holds
\begin{equation}
\label{nonlest}
\| \rho(t)\|_{s} + \| m(t) \|_{s-1} \leq C_{1}\left( 1+ t\right)^{-1/4} \Big(\| \rho_0 \|_s + \|m_0 \|_{s-1}  + \| \rho_0 \|_{L^1} + \|m_0 \|_{L^1}\Big), 
\end{equation}
for all $t \in [0, T_{1}]$ and some constant $C_{1} = C_{1}(\e_{1}, \delta_{1}) > 1$. Recall that the local solution to the initial value problem, belonging to $X_{s}(0, T_{1}; m_{0}/2, 2M_{0})$, for some $T_{1}= T_{1}(a_{0})$, exists for all $t \in [0,T_1]$ thanks to Theorem \ref{themlocale}. Next, we define
\[
 \e_{2} := \mbox{min} \left\lbrace \e_1,\frac{\e_1}{C_{0}} , \frac{\e_1}{C_{2}\left( 1+C_{0}^{2}\right)^{1/2}}, \delta_{1} \right\rbrace > 0.
 \]
Let us suppose that condition \eqref{smallcond} holds for this selected value of $\e_2$. Whence, the local existence theorem \ref{themlocale} implies that
\[
Q_s(T_1) = E_s(T_1) + F_s(T_1) \leq C_1 E_s(0) = C_0 \big( \| \rho_0 \|_{s+1} + \| m_0 \|_s \big) < C_0 \e_2 \leq \e_1.
\]
This bound, together with
\[
\| \rho_0 \|_s + \|m_0 \|_{s-1}  + \| \rho_0 \|_{L^1} + \|m_0 \|_{L^1} < \e_2 \leq \delta_{1},
\]
readily implies estimate \eqref{nonlest} for $t \in [0, T_{1}]$. In addition, we have
\[
E_s(T_1) \leq Q_s(T_1) \leq \e_1.
% \sup_{0 \leq t \leq T_{1}}  \Vert  U (t)\Vert_{s} \leq \vertiii{U}_{s,T_{1}} \leq \e_1. 
\] 
Hence, we have verified condition \eqref{localaprioriEE} from Corollary \ref{cor26}. Upon application of Corollary \ref{cor26} we obtain
\[
Q_s(T_1)^{1/2} \leq C_2 E_s(0)^{1/2} = C_2 \big( \| \rho_0 \|_{s+1} + \| m_0 \|_s \big).
\]
By virtue of 
\[
\| \rho(T_1) \|_{s+1} + \| m(T_1) \|_{s} \leq Q_s(T_1)^{1/2} \leq \e_1,
\]
we can consider the Cauchy problem with initial condition at $t = T_1$ in order to find a local solution in $[T_{1}, 2T_{1}]$ satisfying 
%estimate 
\[
\begin{aligned}
\sup_{T_1 \leq \tau \leq 2T_1} \Big(\Vert \rho(\tau)\Vert_{s+1}^{2}+ \Vert m(\tau) \Vert_{s}^{2}  \Big) &+ \int_{T_1}^{2T_1} \left(  \Vert \rho_x(\tau)\Vert_{s+1}^{2} + \Vert m_x(t)\Vert_{s}^{2} \right)\: dt \leq \\
&\leq C_0^2 \big( \| \rho(T_1) \|_{s+1} + \| m(T_1) \|_s \big)^2\\
&\leq C_0^2 Q_s(T_1)^2.
\end{aligned}
\]
Therefore, we obtain
\[
\begin{aligned}
Q_s(2T_1)^{1/2} &= \left[ Q_s(T_1) + \!\!\sup_{T_1 \leq \tau \leq 2T_1} \Big(\Vert \rho(\tau)\Vert_{s+1}^{2}+ \Vert m(\tau) \Vert_{s}^{2}  \Big) + \int_{T_1}^{2T_1} \left(  \Vert \rho_x(\tau)\Vert_{s+1}^{2} + \Vert m_x(t)\Vert_{s}^{2} \right)\: dt \right]^{1/2} \\
&\leq (1 + C_0^2)^{1/2} Q_s(T_1)^{1/2} \\
&\leq C_2 (1+C_0^2)^{1/2} \big( \| \rho_0 \|_{s+1} + \| m_0 \|_s \big)\\
&\leq C_2 (1+C_0^2)^{1/2} \big( \| \rho_0 \|_{s+1} + \| m_0 \|_s + \| \rho_0 \|_{L^1} + \|m_0 \|_{L^1}\big)\\
&< C_2 (1+C_0^2)^{1/2} \e_2 \\
&\leq \e_1.
\end{aligned}
\]
This yields,
\[
E_s(2T_1)^{1/2} \leq Q_s(2T_1)^{1/2} < \e_1.
\]
This estimate, together with the already verified condition \eqref{vercond} allows us to obtain estimate \eqref{trinormest} and the condition \eqref{localaprioriEE} from Corollary \ref{cor26}, but now on the time interval $t \in [0, 2T_1]$. Consequently, 
\[
\| \rho(t)\|_{s} + \| m(t) \|_{s-1} \leq C_{1}\left( 1+ t\right)^{-1/4} \Big(\| \rho_0 \|_s + \|m_0 \|_{s-1}  + \| \rho_0 \|_{L^1} + \|m_0 \|_{L^1}\Big),
\]
holds for all $t \in [0,2T_1]$ and 
\[
Q_s(2T_1)^{1/2} \leq C_2 \big( \| \rho_0 \|_{s+1} + \| m_0 \|_s \big).
\]
We can proceed by iteration in order to obtain estimates \eqref{trinormest} and \eqref{globdec} for the time intervals $[0, 3T_{1}]$, $[0,4T_1]$, and so on. Thus, the estimates hold globally in time. The theorem is now proved.
\end{proof}

\section*{Acknowledgements}
The work of D. Zhelyazov was supported by a Post-doctoral Fellowship by the Direcci\'{o}n General de Asuntos del Personal Acad\'{e}mico (DGAPA), UNAM. The work of R. G. Plaza was partially supported by DGAPA-UNAM, program PAPIIT, grant IN-104922.

\appendix
\section{Proof of Lemma \ref{lemsupersonic}}
\label{secappen}
Under the supersonicity assumption \eqref{supersonic}, let us define the positive constant
\[
\beta_* := \frac{m_*^2}{\rho_*^2} - p'(\rho_*) > 0.
\]
Assume $\hU = \hU(\xi)$ is a non-trivial solution to \eqref{spect}. Therefore $\hU \in \ker D(\lambda,\xi)$, where
\[
D(\lambda, \xi) = \lambda I + i \xi A_* + \xi^2 B_* + i \xi^3 C_* = \begin{pmatrix} \lambda & i \xi \\ - i \xi \beta_* + \tfrac{1}{2} i \xi^3 k^2 & \lambda + i 2 \xi m_*/\rho_* + \mu \xi^2 \end{pmatrix}.
\]
This yields the dispersion relation 
\begin{equation}
\label{disprel}
\det D(\lambda, \xi) = \lambda^2 + \Big( \mu \xi^2 + i \frac{2 \xi m_*}{\rho_*}\Big) \lambda + \xi^2( \tfrac{1}{2} \xi^2 k^2 - \beta_* \big)= 0.
\end{equation}
The discriminant of the second order polynomial in $\lambda$ on the left hand side of \eqref{disprel} is $\Delta(\xi) := a(\xi) + i b(\xi)$, where
\[
a(\xi) := \xi^2 \big( \xi^2(\mu^2 - 2 k^2) - 4 p'(\rho_*) \big),\qquad b(\xi) := 4 \mu \xi^3 \frac{m_*}{\rho_*}.
\]
Therefore, the roots of \eqref{disprel} are $\lambda_\pm(\xi) := - \tfrac{1}{2} \mu \xi^2 - i \xi m_*/\rho_* \pm \tfrac{1}{2} \Delta(\xi)^{1/2}$. Let us examine
\[
\Re \lambda_+(\xi) = - \tfrac{1}{2} \mu \xi^2 + \tfrac{1}{2} \Re \Delta(\xi)^{1/2}.
\]
We now show that $\Re \lambda_+(\xi) > 0$ for $0 < |\xi| \ll1$, sufficiently small. This is equivalent to prove that
\begin{equation}
\label{good}
\Re \Delta(\xi)^{1/2} > \mu \xi^2, \qquad \text{for } \; 0 < |\xi| \ll1.
\end{equation}
Recalling that
\[
\Re \Delta(\xi)^{1/2} = \frac{1}{\sqrt{2}} \sqrt{a(\xi) + \sqrt{a(\xi)^2 + b(\xi)^2}},
\]
we observe that \eqref{good} is equivalent to
\[
a(\xi) + \sqrt{a(\xi)^2 + b(\xi)^2} > 2 \mu^2 \xi^4,
\]
for $|\xi| \approx 0^+$. Since,
\[
\begin{aligned}
2 \mu^2 \xi^4 - a(\xi) &= 2 \mu^2 \xi^4 - \xi^2 \big( \xi^2(\mu^2 - 2 k^2) - 4 p'(\rho_*) \big)\\
&= \xi^2 \big( (\mu^2  + 2 k^2) \xi^2 + 4 p'(\rho_*) \big) > 0,
\end{aligned}
\]
then \eqref{good} holds if and only if
\[
b(\xi)^2 > 4 \mu^4 \xi^8 - 4 a(\xi) \mu^2 \xi^4.
\]
Upon substitution of the expressions for $a(\xi)$ and $b(\xi)$ we reckon that \eqref{good} is satisfied if and only if
\[
\xi^2 \Big( 2 \frac{m_*^2}{\rho_*^2} - 2 p'(\rho_*) - k^2 \xi^2 \Big) = \xi^2 (2 \beta_* + O(\xi^2)) > 0,
\]
as $|\xi| \to 0$. But this is true because of the supersonicity condition ($\beta_* > 0$). We conclude that $\Re \lambda_+(\xi) > 0$ for sufficiently small values of $|\xi|$. The lemma is proved.
\qed

\def\cprime{$'$} \def\cprime{$'\!\!$}\def\cprime{$'\!\!$}\def\cprime{$'\!\!$}

\end{document}